\pgfplotsset{compat=1.18}
\newcommand{\normmm}[1]{{\left\vert\kern-0.25ex\left\vert\kern-0.25ex\left\vert #1 
		\right\vert\kern-0.25ex\right\vert\kern-0.25ex\right\vert}}
\newtheorem{theorem}{Theorem}[section]
\newtheorem*{theorem*}{Theorem B}
\newtheorem{lemma}[theorem]{Lemma}
\newtheorem{convention}[theorem]{Convention}
\newtheorem{proposition}[theorem]{Proposition}
\newtheorem{properties}[theorem]{Elementary Properties}
\newtheorem{corollary}[theorem]{Corollary}
\newtheorem*{observation*}{Observation}
\newtheorem*{assumption*}{Assumption}
\newtheorem*{question*}{Question}
\theoremstyle{definition}
\newtheorem*{definition*}{Definition}
\theoremstyle{remark}
\newtheorem*{remark*}{Remark}
\newcommand{\R}{\mathbb{R}}
\newcommand{\N}{\mathbb{N}}
\newcommand{\E}{\mathbb{E}}
\newcommand{\PP}{\mathbb{P}}
\newcommand{\Var}{\mathrm{Var}}
\newcommand{\supp}{\mathrm{supp}}
\newcommand{\odd}{\mathrm{odd}}
\newcommand{\even}{\mathrm{even}}
\newcommand{\dist}{\mathrm{dist}}
\newcommand{\GMC}{\mathrm{GMC}}
\newcommand{\an}{\text{\, and \,}}
\numberwithin{equation}{section}
\begin{document}
	\title[Fourier dimension of GMC]{Harmonic analysis of multiplicative chaos  \\ Part I:  the proof of Garban-Vargas conjecture for 1D GMC}

	\author
	{Zhaofeng Lin}
	\address
	{Zhaofeng LIN: School of Fundamental Physics and Mathematical Sciences, HIAS, University of Chinese Academy of Sciences, Hangzhou 310024, China}
	\email{linzhaofeng@ucas.ac.cn}

	\author
	{Yanqi Qiu}
	\address
	{Yanqi QIU: School of Fundamental Physics and Mathematical Sciences, HIAS, University of Chinese Academy of Sciences, Hangzhou 310024, China}
	\email{yanqi.qiu@hotmail.com, yanqiqiu@ucas.ac.cn}

	\author
	{Mingjie Tan}
	\address
	{Mingjie TAN: School of Mathematics and Statistics, Wuhan University, Wuhan 430072, China}
	\email{mingjie1tan1wuhan@gmail.com}


	\begin{abstract}
	  In this paper,  we establish the exact Fourier dimensions of all standard sub-critical Gaussian multiplicative chaos on the unit interval, thereby confirming the Garban-Vargas conjecture. The proof relies on a significant improvement of the vector-valued martingale method, initially developed by Chen-Han-Qiu-Wang in the studies of the Fourier dimensions of Mandelbrot cascade random measures.  
	\end{abstract}

	\subjclass[2020]{Primary 60G57, 42A61, 46B09; Secondary 60G46}
	\keywords{Gaussian multiplicative chaos; Fourier dimension;  Vector-valued martingale method; Pisier's martingale type  inequalities; Littlewood-Paley type decomposition}

	\maketitle

	\setcounter{tocdepth}{2}

	\setcounter{tocdepth}{0}
	\setcounter{equation}{0}



	\section{Introduction}\label{sec-intro}

	This paper is the first part of a series of works on the harmonic analysis of multiplicative chaos measures in the Euclidean spaces.  This series will provide a systematic development of the {\it vector-valued martingale method}, discovered in \cite{CHQW24}, for analyzing the polynomial Fourier decay of multiplicative chaos measures. This method seems to be fundamental,  powerful and  straightforward,   yielding  a crucial {\it random Fourier decoupling estimate} (see Proposition \ref{dec-vm-bis} below),  which can be naturally integrated with classical {\it Littlewood-Paley type decomposition}.  As a result, for key models in multiplicative chaos theory, we are able to determine the exact Fourier dimensions (i.e., the optimal exponent of polynomial Fourier decay) of the associated multiplicative chaos measures.
	
	The  main topics of the three parts in this series  are as follows: 
	\begin{itemize}
	\item {\bf Part I.}  The case study of  the key model--the standard log-correlated Gaussian multiplicative chaos on the unit interval (see \S \ref{sec-GMC-back} for a brief introduction). We  prove the Garban-Vargas conjecture for the standard sub-critical 1D GMC.  
	\item {\bf Part II.} A unified theory of Fourier decay of multiplicative chaos on Euclidean spaces. In this part,  we develop an axiomatic theory, allowing us to deal with various classical subcritical multiplicative chaos models,  including 2D GMC and  higher dimensional GMC,  Mandelbrot random covering, Poisson multiplicative chaos, canonical or generalized Mandelbrot cascades and beyond. 
	\item {\bf Part III.}  The general theory of Fourier decay of multiplicative chaos in the more abstract setting and with abstract background measures. In particular,  in this part, we will deal with complex GMC and various multiplicative chaos on certain natural fractal sets with positive Fourier dimensions. 
	\end{itemize}

An informal description of our main task in Part I is to find the optimal exponent $\tau \ge 0$ such that  (all the notation will be introduced later) the following inequality 
	\begin{align}\label{vec-meth}
	\mathbb{E}\Big[\Big\{\sum_{n=1}^{\infty}\big|n^{\frac{\tau}{2}} \widehat{\mu_{\gamma,\mathrm{GMC}}}(n) \big|^{q}\Big\}^{\frac{1+\varepsilon}{q}}\Big]<\infty
		\end{align}
	holds for very large $q>2$ and very small $\varepsilon>0$, where $\mu_{\gamma,\mathrm{GMC}}$ is the GMC measure on the unit interval associated to the parameter $\gamma\in(0,\sqrt{2})$. Equivalently, we shall find explicitly the following  critical quantity
	\[
 	\sup\Big\{\tau\ge 0\,\big| \,\text{there exists a quantity $(\tau, q, \varepsilon)$ with $q>2$ and $\varepsilon>0$ such that \eqref{vec-meth} holds}\Big\}
	\] 
	and will prove that this critical quantity is indeed the desired Fourier dimension. 
	
Both Part I and Part II are self-contained and can thus be read independently.  The main result of Part I should be considered as  a special case of that of Part II.  However,   we believe that it is reasonable to  write a self-contained separate paper on 1D Gaussian multiplicative chaos, for at least the following reasons: 1) The study of this model is the original motivation of  the general theory. 2)  The Part I will provide readers with immediate insight into our work's novel ideas and methods.  Indeed, the analysis and main inequalities for this model are more concrete and  simpler,  the notation is also much easier. 3) There will be extra difficulties in the study of the higher dimensional  GMC, for instance, for the planar 2D GMC, we shall use a new method by constructing a special $*$-scale invariant kernel (which will be called  {\it sharply-$\sigma$-regular} there) for the log-correlated Gaussian field.
	  
	The common framework of this series is Kahane's $T$-martingale theory for multiplicative chaos measures \cite{Kah87}. More precisely, we will consider the Euclidean space $\R^d$ or a certain subset  $U\subset \R^d$ (for instance,  $U = [0,1]^d$) equipped with a finite Radon measure $\nu$ and  a sequence of {\it independent random  non-negative functions} $P_n(t)$ with $t\in \R^d$ such that $\E[P_n(t)] \equiv 1$.  For any $n\ge 1$, define a random measure by 
	\[
	\mu_n (\mathrm{d}t) : = \Big[\prod_{k=1}^n P_k(t) \Big] \nu(\mathrm{d}t).
	\]
	Then, Kahane's general $T$-martingale theory asserts that, almost surely,   the sequence of random measures $\mu_n$  converges weakly to a random measure  \[
	\mu_\infty = \lim_{n\to\infty}\mu_n.
	\] This limiting random measure $\mu_\infty$ will be  referred to as the multiplicative chaos measure associated with the random sequence of functions $(P_n)_{n\ge 1}$ and the background measure $\nu$.

	\subsection{Background on GMC}\label{sec-GMC-back}
	 Gaussian multiplicative chaos (GMC),  introduced by Kahane in the 1980s \cite{Kah85a}, is a theory of  random measures. Informally, GMC measures arise as the exponential of log-correlated Gaussian fields. They are closely related to many models in mathematical physics such as 2D quantum gravity \cite{DS11, DKRV16, KRV19}, SLE \cite{AJKS11, She16} and random matrices \cite{Web15, BWW17}. The reader is refered to \cite{RV14} for a recent review on GMC.

	Recently, the Fourier decay and the Fourier dimension of GMC  have gained considerable attention. Falconer and Jin \cite{FJ19} provided a non-trivial lower bound for the Fourier dimension of 2D GMC for small parameter values $\gamma < \frac{1}{33}\sqrt{858 - 132\sqrt{34}}$.   The Fourier coefficients of GMC have also been studied  in the construction of the Virasoro algebra in Liouville conformal field theory, number theory and random matrix theory, see \cite{BGKRV24, CN19} and their references.  In a remarkable recent work \cite{GV23},  Garban and Vargas established the Rajchman property of the standard sub-critical GMC measure (denoted $M_\gamma$ there) on the unit circle. Namely, for all sub-critical parameters $\gamma\in(0,\sqrt{2})$,  they established the almost sure convergence 
	$
	\lim_{n\to\infty} \widehat{M_\gamma}(n) = 0$. 
Moreover, for small parameters $0<\gamma<1/\sqrt{2}$, 	they also obtain a lower bound of the Fourier dimension (the definition of Fourier dimension of a measure on $[0,1]$ is recalled in \S~\ref{sec-fd}):
	\[
	\frac{1}{2} - \gamma^2 \leq \dim_F(M_\gamma) \leq 1 - \gamma^2 < \dim_H(M_{\gamma}) = 1 - \frac{\gamma^2}{2},
	\]
	where $\dim_F(M_\gamma)$ and $\dim_H(M_\gamma)$ denote the Fourier and Hausdorff dimensions of $M_\gamma$ respectively.  	Based on the rescaled fluctuation of $\widehat{M}_\gamma(n)$ (see \cite[Theorem~1.3]{GV23}), Garban and Vargas conjectured that for small parameters $0<\gamma < 1/\sqrt{2}$, the Fourier dimension of $M_\gamma$ is given by $1 - \gamma^2$. Moreover, they asked whether for all the sub-critical parameters $\gamma\in(0,\sqrt{2})$,  the Fourier dimension of $M_\gamma$ coincides with its correlation dimension. In this paper, we resolve the Garban-Vargas conjecture in the affirmative.

	\subsection{Main result}
Consider the sub-critical  GMC measure $\mu_{\gamma,\mathrm{GMC}}$ with $\gamma\in(0,\sqrt{2})$ on the unit interval. Informally, $\mu_{\gamma,\mathrm{GMC}}$  is  a random measure on $[0,1]$ given by
	\[
		\mu_{\gamma,\mathrm{GMC}}(\mathrm{d}t)=\exp\Big({\gamma\psi(t)-\frac{\gamma^2}{2}\mathbb{E}[\psi^2(t)]}\Big)\mathrm{d}t,
	\]
	where $\{\psi(t)\}_{t\in[0,1]}$ is a centered Gaussian field with a log-correlated covariance kernel
	\begin{align}\label{log-cor}
		\mathbb{E}[\psi(t)\psi(s)]=\log\frac{1}{|t-s|}, \quad t, s \in [0,1]. 
	\end{align}
	The precise definition of $\mu_{\gamma, \mathrm{GMC}}$ is recalled in \S~\ref{sec-gmc}.

	Throughout the whole paper, for each $\gamma\in(0,\sqrt{2})$, we define $D_\gamma \in (0,1)$ by 
	\begin{align}\label{D-gamma}
		D_{\gamma}: =\left\{
		\begin{array}{cl}
			1-\gamma^2 & \text{if  $0<\gamma<\sqrt{2}/2$}
			\vspace{2mm}
			\\
			(\sqrt{2}-\gamma)^2 & \text{if $\sqrt{2}/2\leq\gamma<\sqrt{2}$}
		\end{array}\right..
	\end{align}

	\begin{theorem}\label{FourierDim-GMC}
		For each $\gamma\in(0,\sqrt{2})$, almost surely, we have  $\mathrm{dim}_{F}(\mu_{\gamma,\mathrm{GMC}})=D_{\gamma}$. 
	\end{theorem}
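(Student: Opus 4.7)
The plan is to establish both $\mathrm{dim}_F(\mu_{\gamma,\mathrm{GMC}}) \le D_\gamma$ and $\mathrm{dim}_F(\mu_{\gamma,\mathrm{GMC}}) \ge D_\gamma$. The upper bound is essentially encoded in the non-trivial rescaled fluctuation of the Fourier coefficients established by Garban-Vargas \cite[Theorem 1.3]{GV23}, so the main task is the matching lower bound: verify \eqref{vec-meth} for every $\tau < D_\gamma$, a sufficiently large $q \ge 2$, and some small $\varepsilon > 0$.

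The first step is to realize $\mu_{\gamma,\mathrm{GMC}}$ as a martingale limit. Decompose the log-correlated Gaussian field as $\psi = \sum_{k \ge 1} X_k$ with independent centered pieces $X_k$ whose correlation kernel is supported at spatial scale $\approx 2^{-k}$ (e.g.\ via Kahane's $*$-scale-invariant convolution construction), and set
\begin{equation*}
\mu_K(dt) := \exp\Big(\gamma \sum_{k=1}^K X_k(t) - \frac{\gamma^2}{2}\mathbb{E}\Big[\Big(\sum_{k=1}^K X_k(t)\Big)^2\Big]\Big)\,dt, \qquad \xi_K := \mu_K - \mu_{K-1}.
\end{equation*}
By Kahane's $T$-martingale theory, $\mu_K \to \mu_{\gamma,\mathrm{GMC}}$ almost surely and $\widehat{\mu_{\gamma,\mathrm{GMC}}}(n) = \sum_{K\ge 1} \widehat{\xi_K}(n)$ in a suitable sense.

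The vector-valued martingale method then proceeds in three steps. (1) For $q \ge 2$ the weighted space $X_\tau := \ell^q(\mathbb{N}, n^{\tau q/2})$ has martingale type $2$, so Pisier's inequality yields
\begin{equation*}
\Big\| \sum_K \widehat{\xi_K} \Big\|_{L^q(\Omega; X_\tau)} \lesssim_q \Big( \sum_{K \ge 1} \big\| \widehat{\xi_K} \big\|_{L^q(\Omega; X_\tau)}^2 \Big)^{1/2}.
\end{equation*}
(2) A Littlewood-Paley-type dyadic frequency decomposition rewrites $\| \widehat{\xi_K} \|_{X_\tau}^q = \sum_{J \ge 0} 2^{J\tau q/2} \sum_{n \sim 2^J} |\widehat{\xi_K}(n)|^q$, reducing the problem to the dyadic blocks $B_{K,J} := \mathbb{E} \sum_{n \sim 2^J} |\widehat{\xi_K}(n)|^q$. (3) Establish the random Fourier decoupling estimate of Proposition \ref{dec-vm-bis}: for some $c > 0$ and the multifractal exponent $\zeta(q) = q - \gamma^2 q(q-1)/2$ one expects $B_{K,J} \lesssim 2^{-c q \max(J-K,0)} \cdot 2^{J - K\zeta(q)}$, where the bound for $K \le J$ uses that $\xi_K$ has Fourier mass essentially concentrated at $|n| \lesssim 2^K$ (smoothness at sub-dyadic scales), and the bound for $K > J$ uses the local $L^q$ control of $\mu_K$ through the multifractal scaling $\mathbb{E}[\mu_K(B_{2^{-K}})^q] \asymp 2^{-K\zeta(q)}$. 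Summing over $K$ and $J$ and optimizing over $q$ produces \eqref{vec-meth} for every $\tau < D_\gamma$.

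The principal obstacle is the decoupling step in the supercritical $L^2$ regime $\gamma \ge \sqrt{2}/2$: here $\mathbb{E}[\mu(B_r)^q] = \infty$ for every $q \ge 2$, yet Pisier's inequality requires $q \ge 2$. This is precisely where the ``significant improvement'' over \cite{CHQW24} must enter. I expect the resolution to proceed by truncating $\xi_K$ on a high-probability good event of the form $\{\sup_t \sum_{k \le K} X_k(t) \le c_0 K\}$ for a threshold $c_0$ slightly below the a.s.\ maximum of the Gaussian field, on which the Gaussian exponential has finite moments of every order $q$, while the complementary event contributes negligibly to the Fourier $\ell^q$ norm. The balance between the truncation level $c_0$, the moment order $q$, and the multifractal scaling $\zeta(q)$ gives a KPZ-type optimization whose maximum is precisely $(\sqrt{2}-\gamma)^2$, matching $D_\gamma$ in this regime.
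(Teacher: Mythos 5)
Your overall architecture (martingale decomposition of the chaos, Pisier-type vector-valued inequality, Littlewood--Paley frequency blocks, multifractal moment estimates) is the right one, but there are two genuine gaps, and the second one is exactly the point where the paper's ``significant improvement'' lives. First, the upper bound: the central limit theorem of Garban--Vargas \cite[Theorem~1.3]{GV23} is only proved for $0<\gamma<\sqrt{2}/2$, so it cannot give $\dim_F(\mu_{\gamma,\GMC})\le D_\gamma$ in the regime $\sqrt{2}/2\le\gamma<\sqrt{2}$ (the paper explicitly flags the analogous convergence in law there as open). The paper instead deduces the upper bound from the classical inequality $\dim_F(\nu)\le\dim_2(\nu)$ together with the known $L^2$-spectrum of GMC (Bertacco, Lacoin--Rhodes--Vargas), with an alternative argument via Kolmogorov's zero-one law and the a.s.\ finiteness criterion for the Riesz energy. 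You need one of these substitutes for half the parameter range.

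Second, and more seriously, your decoupling step is set up with the \emph{outer} moment exponent equal to $2$ (or $q$), which forces you to control $\E[\mu(B_r)^q]$ for $q\ge 2$. As you note, this fails for $\gamma\ge\sqrt{2}/2$; in fact it fails for \emph{every} $\gamma$ once $q>2/\gamma^2$, and you need $q$ large to approach the optimal $\tau$, so the proposed truncation on a good event would be needed throughout and is nowhere carried out --- it is the hard part of your argument and is left as a conjecture. The paper avoids this entirely by decoupling the two exponents: $\ell^q$ with $q\ge 2$ has martingale type $p$ for every $1<p\le 2$, so one works with
\begin{equation*}
\E\big[\|M_m\|_{\ell^q}^{p}\big]\lesssim \sum_{k}\E\big[\|M_k-M_{k-1}\|_{\ell^q}^{p}\big],\qquad 1<p<2,
\end{equation*}
where the outer exponent $p<2$ only ever requires $p$-th moments $\E[X_j^p]=2^{p(p-1)\gamma^2/2}$ of the lognormal weights, finite for all $p$. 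The optimization is then over $p\in(1,2]$ of $f_\gamma(p)=-(\gamma^2p+2/p)+2+\gamma^2$, whose supremum is $1-\gamma^2$ (attained at $p=2$) for $\gamma<\sqrt{2}/2$ and $(\sqrt{2}-\gamma)^2$ (attained at $p=\sqrt{2}/\gamma$) for $\gamma\ge\sqrt{2}/2$ --- this is where the two branches of $D_\gamma$ come from, not from a truncation threshold. A further structural ingredient you are missing is that the type-$p$ inequality must be applied \emph{twice}: once in the scale variable $k$, and once more, conditionally on $\mathscr{F}_{k-1}$, across the dyadic spatial intervals $I\in\mathscr{D}_{k-1}$ (split into odd and even families so that the localized vectors $Y_I$ become conditionally independent and centered); without this second application the sum over the $2^{k-1}$ intervals is too lossy to close the estimate.
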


Replacing Bacry-Muzy's white noise decomposition in \S \ref{sec-gmc}  by one white noise decomposition using the structure of the hyperbolic unit disk,  we may obtain the same result as Theorem \ref{FourierDim-GMC} for GMC on the unit circle with the covariance kernel: 
\[
K(\theta, \theta')  = \log \frac{1}{|e^{i\theta} - e^{i\theta' }|}, \quad \theta, \theta' \in [0,2\pi]. 
\]
The methods developped in this paper seem to be also applicable, at least in certain case of the complex GMC model of  Lacoin-Rhodes-Vargas \cite{LRV15}.  This will be the main topic in a seperate paper. 
	
	In the  classical GMC theory, the exact log-kernel \eqref{log-cor} can usually be replaced by the following perturbed log-kernel:    
	\begin{align}\label{bdd-pert}
	\E[\psi(t)\psi(s)]  = \log \frac{1}{|t-s|} + g(s,t),
	\end{align}
	with $g$ being bounded and continuous, and occasionally endowed with greater smoothness (see, for instance,   Junnila-Saksman-Webb \cite{JSW19} and  Huang-Saksman \cite{HS23}).   This perturbed log-kernel retains many important properties of GMC measure and is  important in higher dimension when $d\ge 3$, since for $d =3$, it remains unresolved  whether the exact log-kernel is of $\sigma$-positive type, while for $d \ge 4$, the exact log-kernel is not even of positive type. Therefore, in Part II of this series of works, we adopt  perturbed log-kernels of the form \eqref{bdd-pert} to tackle higher dimensional GMC.   
	
	However,  the mere boundedness and continuity  of $g$ is inadequate for analyzing the exact polynomial Fourier decay of the resultant GMC measure. Indeed, multiplication by a continuous density on a measure  can profoundly change its  Fourier transform's asymptotic behavior.   Therefore,  to ensure the Fourier dimension of the GMC measure remains invariant under the perturbation, further smoothness constraints on $g$ are indispensable.

	\subsection{Frostman regularity and Fourier restriction estimate}
	
	Recall that a non-negative Borel measure $\nu$ on $\R$ is said to be  $\alpha$-upper Frostman regular if 
\[
\sup  \Big\{    \frac{\nu(I)}{|I|^\alpha}:   \text{$I$ are finite  intervals of $\R$}  \Big\} <\infty. 
\]

\begin{corollary}\label{cor-frost}
For each $\gamma\in(0,\sqrt{2})$, almost surely,  $\mu_{\gamma,\mathrm{GMC}}$ is $\alpha$-upper Frostman regular for any  $0\le \alpha< D_\gamma/2$. 
\end{corollary}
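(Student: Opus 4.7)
The plan is to deduce this corollary directly from Theorem~\ref{FourierDim-GMC} via the standard device of converting polynomial Fourier decay into a spatial Frostman estimate, using a smooth envelope and Plancherel-type duality. Fix $\alpha\in[0,D_\gamma/2)$. Since $D_\gamma\in(0,1)$ for every $\gamma\in(0,\sqrt{2})$, we have $\alpha<1/2$, which will make all the integrals appearing below converge effortlessly. Theorem~\ref{FourierDim-GMC} produces, on a full-measure event, a random constant $C=C(\omega,\alpha)$ such that
\[
|\widehat{\mu_{\gamma,\mathrm{GMC}}}(\xi)|\le C(1+|\xi|)^{-\alpha},\qquad \xi\in\R.
\]

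Next, I would fix once and for all a non-negative Schwartz function $\phi\in\mathcal S(\R)$ with $\phi\ge \mathbf{1}_{[-1/2,1/2]}$. For a bounded interval $I$ of length $|I|\le 1$ centered at $x_I$, set $\phi_I(x):=\phi((x-x_I)/|I|)$, so that $\phi_I\ge \mathbf{1}_I$ pointwise. Plancherel-type duality combined with the change of variable $\eta=|I|\xi$ yields
\[
\mu_{\gamma,\mathrm{GMC}}(I)\le \int_\R \phi_I\,\dd\mu_{\gamma,\mathrm{GMC}}\le C\int_\R |\widehat\phi(\eta)|(1+|\eta|/|I|)^{-\alpha}\,\dd\eta.
\]
Splitting the $\eta$-integral at $|\eta|=|I|$, on $\{|\eta|<|I|\}$ I would bound $(1+|\eta|/|I|)^{-\alpha}\le 1$ and use the boundedness of $\widehat\phi$ to obtain a contribution $\lesssim|I|\le|I|^\alpha$; on $\{|\eta|\ge |I|\}$ I would bound $(1+|\eta|/|I|)^{-\alpha}\le |I|^\alpha |\eta|^{-\alpha}$ and invoke the finiteness of $\int_\R |\widehat\phi(\eta)||\eta|^{-\alpha}\,\dd\eta$ (valid since $\widehat\phi$ is Schwartz and $\alpha<1$) to obtain a contribution of order $|I|^\alpha$. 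Intervals with $|I|>1$ are controlled trivially by $\mu_{\gamma,\mathrm{GMC}}(\R)<\infty$ almost surely. Altogether, this delivers the desired Frostman bound with a random constant.

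Finally, to produce a single full-measure event on which the Frostman bound holds simultaneously for every $\alpha<D_\gamma/2$, I would intersect the countably many a.s.\ events attached to a sequence $\alpha_k\uparrow D_\gamma/2$ and use monotonicity of the Frostman condition in $\alpha$. No genuine obstacle is expected at this stage: the entire content of the corollary is carried by Theorem~\ref{FourierDim-GMC}, and the argument above is a routine frequency-to-space translation via a Schwartz envelope; the only mild subtlety is that the Fourier decay bound holds a priori for continuous $\xi\in\R$ (rather than just integer $n$), which is why the smooth-envelope formulation is convenient.
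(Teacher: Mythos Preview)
Your proposal is correct and follows exactly the standard frequency-to-space argument that the paper has in mind: the paper explicitly omits this derivation as ``routine'' and refers to \cite[Corollary~1.5]{CHQW24}, and your Schwartz-envelope / Plancherel computation is precisely that routine argument. There is nothing to add.
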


The derivation of Corollary~\ref{cor-frost} from Theorem~\ref{FourierDim-GMC} is routine and is omitted in this paper. The reader may refer to  \cite[Corollary~1.5]{CHQW24} for the details of this derivation.  It would be interesting to obtain the optimal exponent of $\alpha$-upper Frostman regularity of $\mu_{\gamma, \GMC}$. 
Note that, the upper-Frostman regularity $\mu_{\gamma,\mathrm{GMC}}$ has also been previously established  by  Astala-Jones-Kupiainen-Saksman    \cite[Theorem~3.7]{AJKS11} in the study of the conformal welding of the random homeomorphism of the unit circle induced by the GMC random measure on the unit circle.

Theorem~\ref{FourierDim-GMC} and Corollary~\ref{cor-frost} combined with the Fourier restriction estimate obtained in \cite[Theorem~4.1]{Moc00}   imply 

\begin{corollary}
For each $\gamma\in(0,\sqrt{2})$, almost surely,  the measure  $\mu_{\gamma,\mathrm{GMC}}$ satisfies the following Fourier restriction estimate: for any $1\le r< \frac{4}{4-D_\gamma}$,  there exists a constant $C(r, \mu_{\gamma,\mathrm{GMC}})>0$ such that for all $f\in L^r(\R)$, 
\[
\| \widehat{f}\|_{L^2(\mu_{\gamma,\mathrm{GMC}})} \le C(r, \mu_{\gamma,\mathrm{GMC}}) \| f\|_{L^r(\R)}. 
\]
\end{corollary}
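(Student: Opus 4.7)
The plan is to deduce the corollary as a black-box consequence of Mockenhaupt's restriction theorem \cite[Theorem~4.1]{Moc00}, using Theorem~\ref{FourierDim-GMC} to supply Fourier decay and Corollary~\ref{cor-frost} to supply ball-growth (Frostman) control. Mockenhaupt's theorem asserts that if a finite non-negative measure $\mu$ on $\R$ satisfies, for constants $C,\alpha,\beta>0$,
\[
\mu(B(x,r))\le C\,r^{\alpha}\quad(x\in\R,\,r>0) \qquad\text{and}\qquad |\widehat{\mu}(\xi)|^{2}\le C\,(1+|\xi|)^{-\beta},
\]
then the Stein--Tomas style inequality $\|\widehat{f}\|_{L^{2}(\mu)}\le C'\|f\|_{L^{r}(\R)}$ holds for every $r$ with $r'>\frac{2(2-2\alpha+\beta)}{\beta}$.

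I would apply this criterion on the following almost sure event. For each positive integer $n$, Theorem~\ref{FourierDim-GMC} gives a realisation on which
\[
|\widehat{\mu_{\gamma,\GMC}}(\xi)|^{2}\le C_{n}\,(1+|\xi|)^{-(D_{\gamma}-1/n)},
\]
and Corollary~\ref{cor-frost} gives a realisation on which $\mu_{\gamma,\GMC}$ is $(D_{\gamma}/2-1/n)$-upper Frostman regular. The countable intersection of these events still has full probability, so almost surely both properties hold simultaneously for every $n$. On this event, set $\alpha_{n}=D_{\gamma}/2-1/n$ and $\beta_{n}=D_{\gamma}-1/n$ and compute
\[
\frac{2(2-2\alpha_{n}+\beta_{n})}{\beta_{n}}=\frac{2\bigl(2+1/n\bigr)}{D_{\gamma}-1/n}\;\xrightarrow[n\to\infty]{}\;\frac{4}{D_{\gamma}}.
\]
Mockenhaupt's theorem therefore yields the restriction estimate for every $r$ with $r'>\frac{2(2+1/n)}{D_{\gamma}-1/n}$. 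Since the relation $r'>4/D_{\gamma}$ is equivalent to $r<4/(4-D_{\gamma})$, an exhaustion in $n$ covers the entire advertised range $1\le r<4/(4-D_{\gamma})$.

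Because each ingredient is fully supplied by earlier statements, there is essentially no obstacle; the only points deserving care are (i) keeping track of the $\epsilon$--losses in both the Fourier and Frostman exponents so that the resulting threshold sharpens to $4/(4-D_{\gamma})$ only in the limit, and (ii) realising all of these almost sure estimates on a common probability-one event via countable intersection. Neither step involves any new analysis of the GMC measure itself.
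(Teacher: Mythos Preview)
Your proposal is correct and follows exactly the approach the paper indicates: the paper itself gives no detailed argument, simply stating that the corollary follows from Theorem~\ref{FourierDim-GMC}, Corollary~\ref{cor-frost}, and Mockenhaupt's restriction theorem \cite[Theorem~4.1]{Moc00}. Your verification of the exponent arithmetic---that $\alpha_n=D_\gamma/2-1/n$ and $\beta_n=D_\gamma-1/n$ give the Mockenhaupt threshold $\frac{2(2+1/n)}{D_\gamma-1/n}\to 4/D_\gamma$, equivalently $r<4/(4-D_\gamma)$---is precisely the computation the paper leaves implicit.
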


	\subsection{Outline of the proof of Theorem \ref{FourierDim-GMC}}
	
	The main part of the proof  of Theorem~\ref{FourierDim-GMC} is to establish the almost sure lower bound $\dim_F(\mu_{\gamma, \GMC})\ge D_\gamma$.   The upper bound  $\dim_F(\mu_{\gamma, \GMC})\le D_\gamma$ in this model, as well as in many other models of multiplicative chaos, is relatively simple  and can be obtained in several different manner.  We also mention that, to prove the upper bound of the Fourier dimension, one only needs to study the asymptotic behavior of the Fourier coefficients  along a  given sequence (say, along the dyadic integers $k=2^n$), see \cite{CHQW24} for an application of this  idea. 

\subsubsection{The upper bound}
	
	For the upper bound, we shall use the classical result in potential theory: the Fourier dimension $\dim_F(\nu)$ of a finite Borel measure $\nu$ on any bounded domain of $\R^d$ is dominated by its correlation dimension $\dim_2(\nu)$ (see  \cite[Section~2.6]{BSS23} or \S \ref{sec-fd} below for the various equivalent definitions of $\dim_2(\nu)$): 
	\[
	\dim_F(\nu)\le \dim_2(\nu). 
	\]
	In our situation, for any $\gamma\in(0,\sqrt{2})$,  the following almost sure equality  holds (with $D_\gamma$ given by the formula \eqref{D-gamma}):
	  \begin{align}\label{cor-dim-eq}
	\dim_2(\mu_{\gamma,\mathrm{GMC}}) = D_\gamma.
	\end{align}
Indeed, 	the equality \eqref{cor-dim-eq}  follows from  the multifractal analysis and especially the $L^2$-spectrum or the correlation dimension of GMC, see Bertacco  \cite[Theorem 3.1]{Ber23} for sub-critical GMC in any domain of $\R^d$ with $d\ge 1$.   The equality  \eqref{cor-dim-eq} has already been studied by   Rhodes-Vargas  \cite[Section~4.2]{RV14} and   Garban-Vargas \cite[Remark~2]{GV23}, and  implicitly mentioned  in Lacoin-Rhodes-Vargas \cite{LRV15}.   

Note also that for small parameter values, Garban and Vargas \cite[Theorem 1.3]{GV23} proved a central limit theorem: for  $0<\gamma <\sqrt{2}/2$,  the following convergence holds in law:
\begin{align}\label{CLT-GV}
n^{(1-\gamma^2
)/2}  \widehat{ \mu_{\gamma,\mathrm{GMC}}}(n)  \xrightarrow[n\to\infty]{(d)}  \sqrt{\frac{\kappa}{2}}
\,W_{\mu_{2\gamma,\mathrm{GMC}}([0,1])},
\end{align}
where $W$ is a complex Brownian
motion independent of $\mu_{2\gamma,\mathrm{GMC}}$ and $\kappa>0$ is an explicitly given constant. Note that Garban and Vargas proved the central limit theorem \eqref{CLT-GV} for the GMC on the circle, but their method works for the GMC on the unit interval.     It is then easy to derive from  \eqref{CLT-GV} that for small parameter $0<\gamma < \sqrt{2}/2$,  almost surely,  one has $\dim_F(\mu_{\gamma,\mathrm{GMC}})\le D_\gamma= 1- \gamma^2$.   Therefore, it seems to be of independent interest to study similar convergence in law  as \eqref{CLT-GV} for $\gamma\in [\sqrt{2}/2, \sqrt{2})$,  see \cite[Proposition 1.12]{CHQW24} for a related result.

	\subsubsection{The lower bound via vector-valued martingale method}
	To establish the almost sure lower bound $\dim_F(\mu_{\gamma, \GMC})\ge D_\gamma$, we shall use an important improvement of the vector-valued martingale method  discovered in \cite{CHQW24} in the studies of the Fourier dimensions of Mandelbrot cascade random measures.   Compared with the vector-valued martingale method in the setting of Mandelbrot cascades, several key improvements are required in the setting of GMC:
	\begin{itemize}
	\item We will focus on the upper estimate in  the {\it random Fourier decoupling estimate} obtained in Proposition \ref{dec-vm-bis} below and do not make any effort in proving its sharpness. Therefore, instead of using all the powerful vector-valued martingale inequalities due to Burkholder-Rosenthal or  Bourgain-Stein as in \cite{CHQW24}, here we only need the Pisier's martingale type $p$ inequalities. 
	\item The tree structure together with the independence of the random weights in Mandelbrot cascades will be replaced by the so-called Bacry-Muzy's white noise decomposition of the log-correlated Gaussian field and  an {\it odd-even decomposition} (see  \eqref{def-even-odd} and \eqref{odd-even-dec} for its precise meaning).  Here we shall also mention that, for higher dimensional GMC, in order to obtain greater smoothness of certain stochastic processes (which will be necessary),  the Bacry-Muzy's white noise decomposition should be replaced by a construction of  $\sigma$-regular and $*$-scale invariant kernel to the log-correlated Gaussian field.
	\item Compared with that in  the setting of Mandelbrot cascades,  a key difficulty arises in the GMC setting for obtaining   the {\it separation-of-variable estimate} of the higher-frequency part of the localized Fourier transform (see \eqref{def-YI-intro} for its definition).  We shall resolve this difficulty by applying a  dyadic-discrete-time approximation of the  Gaussian stochastic processes used in defining  the random weights. In particular, the H\"older continuity and the constant of the H\"older continuity of the Gaussian stochastic processes will be crucial for us. 
	\item  The Abel's summation method will play an important role. See Step 4 in the proof of Proposition~\ref{UB-ZW}.  
	\item The discrete version  of the product rule for derivatives will be used. See the elementary identity \eqref{elem-id} and its application in Step 9 in the proof of Proposition~\ref{UB-ZW}. 
	\end{itemize} 
	
In what follows,  we briefly outline the vector-valued  martingale method for obtaining the optimal Fourier decay  in the setting of GMC.

	We shall use the  construction of GMC via the Bacry-Muzy's  white noise decomposition of  Gaussian field with the log-correlated covariance kernel \eqref{log-cor} (this construction will be recalled in \S~\ref{sec-gmc}, see \cite{BKNSW15} and \cite{BM03} for more details). In particular,  for any $\gamma\in (0, \sqrt{2})$, we can define a sequence of independent stochastic processes depending on the parameter $\gamma$ (see the formula \eqref{def-Xm} below for its precise definition)
	\[
	\{ X_m(t) = X_{\gamma, m}(t): t\in [0,1]\}_{m\ge 0}  \quad \text{with $X_m(t)\ge 0$ and $\E[X_m(t)]\equiv 1$}.
	\]
	Then the GMC measure  $\mu_{\gamma, \GMC}$ is defined as the limit of  the following approximating random measures in the sense of weak convergence of measures (see  the formula \eqref{def-mu-gm} below for the details): 
	\[
	\mu_{\gamma,m}(\mathrm{d}t)=  \Big[\prod_{j=0}^{m}X_{j}(t) \Big] \mathrm{d}t. 
	\]
	Note that $(\mu_{\gamma,m})_{m\ge 0}$ is a measure-valued martingale with respect to the natural  filtration. 
	
	Now, for each fixed $\gamma\in(0,\sqrt{2})$ and any fixed $\tau\in(0,D_{\gamma})$, consider the vector-valued martingale $(M_m)_{m\geq0}$ defined by 
	\begin{align}\label{vector-valued-martingale}
		M_m=M_{\gamma,\tau,m}:=(n^{\tau/2}\widehat{\mu_{\gamma,m}}(n))_{n\geq1}.
	\end{align}
That is,  for each $n\geq1$,
	\[
	M_m(n)=n^{\tau/2}\widehat{\mu_{\gamma,m}}(n)=n^{\tau/2}\int_0^1e^{-2\pi int}\mu_{\gamma,m}(\mathrm{d}t).
	\]
	By Lemma~\ref{lem-fm} below, for any integer  $m\ge 0$, if $1<p<2$ and   $q > \frac{4}{1-\tau}$,  then 
	\[
	\E\Big[ \Big\{\sum_{n=1}^{\infty}    |M_m(n)|^q\Big\}^{p/q}\Big]<\infty. 
	\]
	Therefore, $(M_m)_{m\ge 0}$ is $\ell^q$-valued martingale with $\E[\|M_m\|_{\ell^q}^p]<\infty$ for all integers $m\ge 0$.

	The key step in proving Theorem~\ref{FourierDim-GMC}  is the following Theorem~\ref{Uniform-Bound} on the uniform $L^p(\ell^q)$-boundedness of the $\ell^q$-valued martingale $(M_m)_{m\ge 0}$.

	\begin{theorem}\label{Uniform-Bound}
		Let  $\gamma\in(0,\sqrt{2})$ and $\tau\in(0,D_{\gamma})$. Then there exist  $p\in(1,2)$ and $q\in(\frac{4}{1-\tau}, \infty)$ such that 
\[
	(p-1)\Big(1-\frac{\gamma^2p}{2}\Big)-\frac{\tau p}{2}-\frac{p}{q}>0.
	\]
		Moreover, for any such exponents $p$ and $q$,  	we have
		\begin{align}\label{sup-Mm}
			\sup_{m\geq 0}\mathbb{E}[\|M_m\|_{\ell^{q}}^{p}]<\infty.
		\end{align}
	\end{theorem}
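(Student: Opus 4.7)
First, the existence of $(p,q)$ satisfying the algebraic condition is a direct optimization check: the function $p \mapsto 2(p-1)(1-\gamma^2 p/2)/p$ attains its supremum $D_\gamma$ on $(1,2)$ at (or in the limit at) $p^* = \min(\sqrt{2}/\gamma,\, 2)$, which one verifies by differentiation. Since $\tau < D_\gamma$ by hypothesis, picking $p$ near $p^*$ and $q$ sufficiently large makes the algebraic inequality hold.

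For the main estimate, the plan is to exploit the $\ell^q$-valued martingale structure of $(M_m)_{m\geq 0}$ and apply Pisier's martingale type $p$ inequality, available because $\ell^q$ with $q \geq 2$ has martingale type $2$, hence type $p$ for every $p \in (1,2)$. This reduces the uniform bound \eqref{sup-Mm} to the summability
\[
\sum_{k=0}^{\infty} \mathbb{E}\bigl[\|M_k - M_{k-1}\|_{\ell^q}^p\bigr] < \infty.
\]
The $k$-th martingale increment
\[
\Delta_k M(n) \;=\; n^{\tau/2} \int_0^1 e^{-2\pi i n t}\bigl(X_k(t)-1\bigr) \prod_{j=0}^{k-1} X_j(t)\, dt
\]
carries the natural spatial scale $2^{-k}$ inherited from the Bacry-Muzy white-noise slice $X_k$. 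I would partition $[0,1]$ into dyadic intervals $I$ of length $\sim 2^{-k}$, decompose $\Delta_k M(n) = n^{\tau/2} \sum_I Y_I(n)$ with $Y_I(n)$ the localized Fourier integral on $I$, and apply the random Fourier decoupling estimate (Proposition~\ref{dec-vm-bis}). This is combined with an odd-even decomposition of $(X_j)$ that renders alternating generations conditionally independent; a second application of Pisier's type $p$ inequality in the conditional setting then yields a bound of the form
\[
\mathbb{E}\bigl[\|\Delta_k M\|_{\ell^q}^p\bigr] \;\lesssim\; \mathbb{E}\Big[\Big(\sum_I \|n^{\tau/2} Y_I(n)\|_{\ell^q_n}^q\Big)^{p/q}\Big].
\]

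To bound $\|n^{\tau/2} Y_I(n)\|_{\ell^q_n}$ uniformly in $I$ and $k$, I would perform a Littlewood-Paley splitting at the critical frequency $n \sim 2^k$. The low-frequency portion $n \lesssim 2^k$ is handled by a crude mass estimate on $I$ together with the multifractal moment $\mathbb{E}[\mu_{\gamma,k}(I)^p] \lesssim |I|^{\,p - \gamma^2 p(p-1)/2}$. The high-frequency portion is the technical heart of the argument: on $I$ one replaces the ambient Gaussian weight $\prod_{j<k} X_j(t)$ by a dyadic-discrete-time step-function approximation, with error controlled quantitatively by the Hölder modulus of continuity of the underlying Gaussian processes — the explicit Hölder constants must be tracked, as they are integrated against high moments downstream. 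The leading step-function part then separates variables, whereupon Abel summation combined with the discrete product rule for finite differences extracts the oscillatory gain $\sim 1/n$ from the Fourier kernel. Assembling everything, the sum over dyadic pieces and the weight $n^{\tau p/2}$ collapse into a geometric series in $k$ with common ratio $2^{-k[(p-1)(1-\gamma^2 p/2) - \tau p/2 - p/q]}$, summable precisely under the hypothesis. The main obstacle is this high-frequency separation-of-variable estimate: the delicate simultaneous balance between the Hölder regularity constant, the $1/n$ oscillatory gain, the multifractal moment exponent, and the $\ell^q$ summation penalty is exactly where the novel ideas of the vector-valued martingale method must be deployed in the GMC setting.
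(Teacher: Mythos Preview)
Your proposal follows the paper's route, but contains two misstatements worth flagging. First, the odd-even decomposition is not of the processes $(X_j)$---these are already mutually independent by construction---but of the dyadic intervals $I \in \mathscr{D}_{k-1}$ at the fixed spatial level: the point is that the restrictions of $X_k$ to two such intervals with $\mathrm{dist}(I,I') \geq 2^{-(k-1)}$ are independent (property (P3) of the Bacry--Muzy cone structure), so that conditioned on $\mathscr{F}_{k-1}$ the random vectors $(Y_I)_I$ within each parity class become independent and centered, which is what licenses the second application of type $p$. Second, that second conditional type-$p$ application yields
\[
\mathbb{E}\bigl[\|\Delta_k M\|_{\ell^q}^p\bigr] \;\lesssim\; \sum_{I\in\mathscr{D}_{k-1}} \mathbb{E}\bigl[\|Y_I\|_{\ell^q}^p\bigr],
\]
an $\ell^p$-sum over $I$ \emph{outside} the expectation, not the $\ell^q$-sum inside the expectation that you wrote; your displayed bound is in fact weaker than this and does not follow from Pisier's inequality. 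The correct $\ell^p$-form is exactly what the localized estimate $\mathbb{E}[\|Y_I\|_{\ell^q}^p] \lesssim |I|^{1+\Theta}$ (Proposition~\ref{UB-ZW}) plugs into, since there are $\sim 2^k$ intervals of size $|I|=2^{-(k-1)}$ and $\Theta>0$ gives the geometric decay. With these two corrections your outline matches the paper's proof.
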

	
	By the standard argument in the theory of vector-valued martingales, by \eqref{weacontoGMC}, the inequality \eqref{sup-Mm} is equivalent to  
	\[
	\mathbb{E}\Big[\Big\{\sum_{n=1}^{\infty}|n^{\tau/2} \widehat{\mu_{\gamma,\mathrm{GMC}}}(n) |^{q}\Big\}^{p/q}\Big]<\infty
	\]
	and hence, almost surely, 
	\[
	| \widehat{\mu_{\gamma,\mathrm{GMC}}}(n)|^2 = O(n^{-\tau}) \quad \text{as $n\to\infty$}. 
	\]
Since $\tau\in (0, D_\gamma)$ is chosen arbitrarily,  the above asymptotic relation  provides the desired almost sure lower bound of $\dim_F(\mu_{\gamma,\mathrm{GMC}})\ge D_\gamma$. 
	
	Let us explain our  strategy of the proof of Theorem~\ref{Uniform-Bound}. The three key ingredients are 
	\begin{itemize}
	\item {\it Localization via twice applications of martingale type $p$ inequalities for  $\ell^q$}: In Proposition~\ref{dec-vm-bis} below, by applying twice martingale type $p$ inequalities for the Banach space $\ell^q$, we shall show that
	\[
	\sup_{m\ge 1}\mathbb{E}[\|M_m\|_{\ell^q}^p]\lesssim  \E[\| M_1\|_{\ell^q}^p]  +   \sum_{k=2}^{\infty}\sum_{I \in \mathscr{D}_{k-1}}\mathbb{E}[\|Y_I \|_{\ell^q}^p],
	\]
	where $\mathscr{D}_{k-1}$ is the family of dyadic sub-intervals of $[0,1)$ with length $2^{-(k-1)}$ and for each dyadic interval $I\in \mathscr{D}_{k-1}$, the localized random vector   $Y_I = (Y_I(n))_{n\ge 1}$ is defined by 
	\begin{align}\label{def-YI-intro}
	Y_I(n)=  n^{\frac{\tau}{2}}\int_{I}\Big[\prod_{j=0}^{k-1}X_j(t)\Big]    (X_{k} (t) - 1) e^{-2\pi int}\mathrm{d}t \quad \text{for all $n\ge 1$.}
	\end{align}
	\item {\it Dyadic-discrete-time approximation of the stochastic process in $Y_I(n)$:} Inspired by the classical Littlewood-Paley theory,  for any $I\in \mathscr{D}_{k-1}$ and $2^{L+k-1}< n\le  2^{L+k}$,  we  will use a  dyadic-discrete-time approximation of $(L+k-1)$-level for the stochastic process 
	  \[
D_k(t)=	\Big[\prod_{j=0}^{k-1}X_j(t)\Big]\cdot(X_{k} (t) - 1).
	\]
	More precisely, we divide $I$ into $2^L$ sub-intervals  $J$ of the same length $2^{-(L+k-1)}$:
	\[
	 I= \bigsqcup_{J\in\mathscr{D}_{L+k-1}(I) }J \quad \text{\, with\,} \quad \mathscr{D}_{L+k-1}(I): = \Big\{J\subset I: J\in \mathscr{D}_{L+k-1}\Big\}.
	\] By denoting $\ell_J$ the left end-points of the sub-interval $J$, we decompose $D_k(t)$ as the dyadic approximation term and an error term:  
	\begin{align}\label{dec-dya-err}
	D_k(t) = \underbrace{ \sum_{J\in \mathscr{D}_{L+k-1}(I)}   D_k(\ell_J) \mathds{1}_J(t)}_{\text{dyadic approximation term}}  + \underbrace{ \sum_{J\in \mathscr{D}_{L+k-1}(I)} [D_k(t)-D_k(\ell_J)] \mathds{1}_J(t)}_{\text{error term}}.
	\end{align}
		\item  {\it The  separation-of-variable pointwise upper estimate for $ |Y_I(n)|$}: We shall see that the main decay of $Y_I(n)$ comes from the Fourier transform of the above dyadic approximation term and the oscillatory integrals  
		\[
		\int_J e^{-2 \pi i n t} \mathrm{d}t \quad \text{with $2^{L+k-1}<n\le 2^{L+k}$ and $J\in \mathscr{D}_{L+k-1}(I)$}.
		\] 
Moreover, in summing up the Fourier transforms of each part corresponding to an $J\in \mathscr{D}_{L+k-1}(I)$ in the above dyadic approximation term, we will obtain   a crucial cancellation by using the Abel's summation method (see \eqref{Abel-cancellation} for the precise identity) and will arrive at an upper estimate of the Fourier transforms of the dyadic approximation term as certain  weighted sum of the following differences:
\begin{align}\label{D-diff}
D_k(\ell_J) - D_k(r_J)
\end{align}
with $r_J$ being the right end-point of the sub-interval $J$. 
		
Then, for controlling  the differences $D_k(\ell_J) - D_k(r_J)$ in \eqref{D-diff} and the differences $[D_k(t)-D_k(\ell_J)]\mathds{1}_J(t)$ in the error terms in \eqref{dec-dya-err}, 	we are going to use an appropriate H\"older regularity of the stochastic process	$D_k(t)$ induced by the regularity of $X_j(t)$ (see Lemma \ref{Holder-p-moment-X_m} below).  We shall obtain,  in Proposition~\ref{UB-ZW} below,  the following separation-of-variable pointwise upper estimate 
\begin{align}\label{sep-es}
|Y_I(n)| \le    v_0(n) R_0 +  \sum_{L=1}^\infty v_L(n)  R_L + \sum_{L=1}^\infty w_L(n)  Q_L \quad \text{for all $n\ge 1$,}
\end{align}
where $R_L$, $Q_L$ are non-negative random variables and  $v_L$, $w_L$ are deterministic (without randomness)   sequences of non-negative numbers with supports 
\begin{align}\label{supp-ass}
	\supp(v_0)  =  [1, 2^k] \cap\N \an \supp(v_L)  = \supp(w_L) = (2^{k+L-1}, 2^{k+L}] \cap \N. 
\end{align}
Moreover,  $R_L$, $Q_L$  and  $v_L$, $w_L$ are all explicitly constructed with rather simple forms.  The separation-of-variable pointwise estimate \eqref{sep-es} combined with the conditions \eqref{supp-ass} turns out to be particularly useful for our purpose. Indeed, it allows us to  obtain immediately the upper estimate of $\E[\|Y_I\|_{\ell^q}^p]$:
\[
\E[\|Y_I\|_{\ell^q}^p]\lesssim   \sum_{L=0}^\infty \| v_L\|_{\ell^q}^p \cdot  \E[ R_L^p] + \sum_{L=1}^\infty \| w_L\|_{\ell^q}^p \cdot   \E[ Q_L^p]. 
\]
Then, by estimating all the quantities $ \| v_L\|_{\ell^q}^p$,  $\| w_L\|_{\ell^q}^p$ and $\E[ R_L^p]$,  $\E[ Q_L^p]$, we obtain 
	\[
	\E[\|Y_I\|_{\ell^q}^p] \lesssim    |I|^{p-\frac{p(p-1)\gamma^2}{2}-\frac{\tau p}{2}-\frac{p}{q}}. 
	\]
	\end{itemize}

	\subsection*{Acknowledgements.} 
YQ is supported by National Natural Science Foundation of China (NSFC No. 12471145).

	\section{Preliminaries}

	\subsection{Fourier dimension and correlation dimension}\label{sec-fd}
	In this subsection, we always assume that $\nu$ is a finite positive Borel measure on the unit interval $[0,1]$ and we denote its Fourier transform by
	\[
	\widehat{\nu}(\xi) := \int_{[0,1]} e^{-2\pi i \xi t}  \nu(\mathrm{d} t), \quad \xi \in \mathbb{R}.
	\]
	
	The Fourier dimension of $\nu$ is defined by (see, e.g., \cite[Section~8.2]{BSS23})
	\begin{align}\label{def-Fourier-dim}
	\dim_F(\nu) := \sup \{ D \in [0, 1): |\widehat{\nu}(\xi)|^2 = O(|\xi|^{-D}) \text{ as } \xi \to \infty\}.
	\end{align}
	
	Following Kahane \cite[Chapter~17, Lemma~1]{Kah85b}, we may reduce the study of the decay behavior of the Fourier transform of $\widehat{\nu}(\xi)$ as $\xi \to \infty$ to that of its Fourier coefficients $\widehat{\nu}(n)$ on the positive integers as $n \to \infty$. More precisely,  let $\nu$ be a finite positive Borel measure on $[0, 1]$. Then
	\begin{align}\label{R-to-Z}
		\dim_F(\nu) = \sup \left\{ D \in [0, 1) : |\widehat{\nu}(n)|^2 = O(n^{-D}) \text{ as } n \to \infty \right\}.
	\end{align}
	\begin{remark*}
		We note that, usually in the literature,  the equality \eqref{R-to-Z}   is used for  measures supported on a small sub-interval within $[\delta, 1 - \delta]$ for some $0 < \delta < 1/2$. However, by Kahane's original work, one can remove this assumption on the support.  See \cite[Lemma~1.8]{CHQW24} for the details. 
	\end{remark*}

The correlation dimension $\dim_2(\nu)$, or sometimes called the lower $L^2$-dimension of the measure $\nu$ is defined by (see \cite[Lemma~2.6.6 and Definition~2.6.7]{BSS23})
\begin{align}\label{def-sup-ball}
\dim_2(\nu): =  \liminf_{\delta\to 0^+}\frac{\displaystyle \log \Big(\sup \sum_i \nu \big(B(x_i, \delta)\big)^2 \Big)}{\log \delta},
\end{align}
where the supremum is taken over all families of disjoint balls. The above definition is equivalent to  (see \cite[Definition~2.6.1]{BSS23})
\[
\dim_2(\nu) =  \liminf_{\delta\to 0^+}\frac{\displaystyle \log \Big(\int  \nu \big(B(x, \delta)\big) \nu(\mathrm{d}x) \Big)}{\log \delta}.
\]

The above definition is also equivalent to the following one in terms of the Riesz-energy (see \cite[Proposition~2.1]{HK97}): 
\begin{align}\label{R-en-def}
\dim_2(\nu)= \sup\Big\{s\ge 0: \int\frac{\nu(\mathrm{d}x)\nu(\mathrm{d}y)}{|x-y|^s} <\infty \Big\}.
\end{align}
Since the correlation dimension of a measure  is always dominated by its Hausdorff dimension (see \cite[Theorem~1.4]{FLR02}), in the above definition \eqref{R-en-def}, we may always assume that $0\le s <1$. Then by the standard equality for the Riesz energy (see, e.g., \cite[Theorem~3.10]{Mat15}),  for any $0\le s<1$, 
	\[
	\int_{[0, 1]^2} \frac{\nu(\mathrm{d}x)\nu (\mathrm{d}y)}{|x - y|^{s}} =  \pi^{s-1/2} \frac{\Gamma\big(\frac{1-s}{2}\big)}{\Gamma\big(\frac{s}{2}\big)} \int_\mathbb{R} |\widehat{\nu}(\xi)|^2|\xi|^{s-1}\mathrm{d}\xi.
	\] 
	Hence the definition \eqref{R-en-def} is further equivalent to 
	\begin{align}\label{Fourier-cor-def}
	\dim_2(\nu)= \sup\Big\{0\le s<1 :  \int_\mathbb{R} |\widehat{\nu}(\xi)|^2|\xi|^{s-1}\mathrm{d}\xi <\infty \Big\}.
	\end{align}
	In particular, the above definition \eqref{Fourier-cor-def} for the correlation dimension $\dim_2(\nu)$ compared  with the definition  \eqref{def-Fourier-dim} for the Fourier dimension $\dim_F(\nu)$ implies the following classical inequality: 
	\begin{align}\label{F-less-cor} 
	\dim_F(\nu)\le \dim_2(\nu). 
	\end{align}
	We note that the inequality \eqref{F-less-cor} is also used in \cite{CLS24} in the study of Fourier dimensions of Mandelbrot cascades. 
	
	\subsection{Martingale type $p$ inequalities for $\ell^q$} 
	
	We shall use the following well-known fact in the local theory of Banach spaces, also known as the theory of Banach space geometry (see \cite[Proposition~10.36 and Definition~10.41]{Pis16}): 
	\begin{center}
		{\it  For any  $2\le q<\infty$, the Banach space $\ell^q$ has martingale type $p$ for all $1<p\le 2$. }
	\end{center}
	More precisely,  for any  $1<p\leq2\leq q<\infty$, there exists a constant $C(p, q)>0$ such that any $\ell^q$-valued martingale  
	$(F_m)_{m\ge 0}$ in $L^p(\PP; \ell^q)$ satisfies
	\begin{align}\label{def-Mtype}
		\E [ \| F_m\|_{\ell^q}^p] \le  C(p, q) \sum_{k =0}^m   \E[ \|F_k- F_{k-1}\|_{\ell^q}^p],
	\end{align}
	with the convention $F_{-1}\equiv 0$.

	The inequality \eqref{def-Mtype} implies in particular that for any family of independent and  centered  random variables $(G_k)_{k=0}^m$  in $L^p(\PP; \ell^q)$, 
	\begin{align}\label{def-ind-Mtype}
		\E\Big[ \Big\|\sum_{k=0}^m G_k \Big\|_{\ell^q}^p\Big] \le C(p, q)   \sum_{k =0}^m \E[\|G_k \|_{\ell^q}^p].  
	\end{align} 
	
	\section{Construction of GMC via Bacry-Muzy's white noise decomposition}\label{sec-gmc}
	
	We start with recalling the  construction of GMC via the Bacry-Muzy's  white noise decomposition of  Gaussian field with the log-correlated covariance kernel \eqref{log-cor}. See \cite{BKNSW15} and \cite{BM03} for details.

	Let $\lambda$ be the hyperbolic measure on the upper halp-plane, that is, for any Borel set $A\subset\mathbb{R}\times\mathbb{R}_+$,
	\begin{align}\label{hyperbolic measure}
		\lambda(A):=\int_{A}\frac{\mathrm{d}x\mathrm{d}y}{y^2}.
	\end{align}
	Let $W$ denote the white noise on $\mathbb{R}\times\mathbb{R}_+$ with control measure $\lambda$. In fact, $W$ is considered as a random real function on the Borel sets of $\mathbb{R}\times\mathbb{R}_+$ with finite $\lambda$-measure characterized by the following properties: for all disjoint Borel sets $A,B\subset\mathbb{R}\times\mathbb{R}_+$ satisfying $\lambda(A),\lambda(B)<\infty$, we have
	
	(1) $W(A)$ is a centered Gaussian random variable with variance $\lambda(A)$;
	
	(2) $W(A)$ and $W(B)$ are independent;
	
	(3) $W(A\sqcup B)=W(A)+W(B)$ almost surely.
	
	For any $m\geq0$ and any $t\in[0,1]$, denote the Borel set (see the left part in  Figure \ref{fig-CA})
	\[
	\mathcal{C}_m(t):=\Big\{(x,y)\in\mathbb{R}\times\mathbb{R}_+\,\Big|\,y>\max\big\{2|x-t|,2^{-m}\big\},\,|x-t|<\frac{1}{2}\Big\}
	\]
	and define
	\begin{align}\label{def-psim}
		\psi_{m}(t): =W(\mathcal{C}_m(t)).
	\end{align}
	
\begin{figure}[H]
\centering
\begin{subfigure}{0.25\textwidth}
\centering
\begin{tikzpicture}
  \def\xt{0.2}
  \tikzset{>={Stealth}}
  \begin{axis}[
    axis equal image,
    scale only axis,
    width=5.5cm,
    axis x line=middle,
    axis y line=none,
    xtick={\xt-0.5,0,\xt,\xt+0.5,1},
    xticklabels={$t-\frac{1}{2}$,0\vphantom{$\frac{1}{2}$},$t$\vphantom{$\frac{1}{2}$},$t+\frac{1}{2}$,$1$\vphantom{$\frac{1}{2}$}},
    xmin=-0.5, xmax=1.2,
    ymin=0,    ymax=1.3,
    tick align=outside,
    tick style={thin,black},
    every inner x axis line/.append style={-,thick},
  ]
    \addplot [name path=pp,draw=none] coordinates {
      (\xt-0.5,\pgfkeysvalueof{/pgfplots/ymax}) (\xt-0.5,1) (\xt,0) (\xt+0.5,1) (\xt+0.5,\pgfkeysvalueof{/pgfplots/ymax}) };

    \path [name path=p0] (\pgfkeysvalueof{/pgfplots/xmin},\pgfkeysvalueof{/pgfplots/ymax}) -- (\pgfkeysvalueof{/pgfplots/xmax},\pgfkeysvalueof{/pgfplots/ymax});

    \path [name path=p1] (\pgfkeysvalueof{/pgfplots/xmin},1) -- (\pgfkeysvalueof{/pgfplots/xmax},1);

    \path [name path=p4] (\pgfkeysvalueof{/pgfplots/xmin},0.125) -- (\pgfkeysvalueof{/pgfplots/xmax},0.125);

    \draw [name path=d0,thick, densely dotted, intersection segments={of=pp and p4,sequence=L1 -- R2 -- L3}];

    \draw [thick,densely dotted,intersection segments={of=pp and p4,sequence=L2}];

    \draw [thick,densely dotted] (axis cs:\xt-0.5,0) -- (axis cs:\xt-0.5,1) (axis cs:\xt+0.5,0) -- (axis cs:\xt+0.5,1);

    \addplot [blue!30]  fill between [of=d0 and p0];

    \node  [font=\footnotesize] at (axis cs:\xt,0.9) {$\mathcal{C}_{m}(t)$};

    \draw [thick,densely dotted,intersection segments={of=pp and p1,sequence=R3}];

    \draw [thick,densely dotted,intersection segments={of=pp and p4,sequence=R3}];

    \coordinate (A1) at (axis cs:\xt+0.5,0);
    \coordinate (A2) at (axis cs:\xt+0.5,1);
    \coordinate (D1) at (axis cs:\xt+0.7,0);
    \coordinate (D2) at (axis cs:\xt+0.7,0.125);
  \end{axis}
  \draw [<->] ($(A1)+(1.2em,0)$) -- node [right] {$1$} ($(A2)+(1.2em,0)$);
  \draw [<->] ($(D1)+(1.2em,0)$) -- node [right,font=\footnotesize] {$2^{-m}$} ($(D2)+(1.2em,0)$);
\end{tikzpicture}
\end{subfigure}
\qquad \qquad \quad
\begin{subfigure}{0.45\textwidth}
\centering
\begin{tikzpicture}
  \def\xt{0.2}
  \tikzset{>={Stealth}}
  \begin{axis}[
    axis equal image,
    scale only axis,
    width=6.5cm,
    axis x line=middle,
    axis y line=none,
    xtick={\xt-0.5,0,\xt,\xt+0.5,1},
    xticklabels={$t-\frac{1}{2}$,0\vphantom{$\frac{1}{2}$},$t$\vphantom{$\frac{1}{2}$},$t+\frac{1}{2}$,$1$\vphantom{$\frac{1}{2}$}},
    xmin=-0.5, xmax=1.5,
    ymin=0,    ymax=1.3,
    tick align=outside,
    tick style={thin,black},
    every inner x axis line/.append style={-,thick},
  ]
    \addplot [name path=pp,draw=none] coordinates {
      (\xt-0.5,\pgfkeysvalueof{/pgfplots/ymax}) (\xt-0.5,1) (\xt,0) (\xt+0.5,1) (\xt+0.5,\pgfkeysvalueof{/pgfplots/ymax}) };

    \path [name path=p0] (\pgfkeysvalueof{/pgfplots/xmin},\pgfkeysvalueof{/pgfplots/ymax}) -- (\pgfkeysvalueof{/pgfplots/xmax},\pgfkeysvalueof{/pgfplots/ymax});

    \path [name path=p1] (\pgfkeysvalueof{/pgfplots/xmin},1) -- (\pgfkeysvalueof{/pgfplots/xmax},1);

    \path [name path=p2] (\pgfkeysvalueof{/pgfplots/xmin},0.5) -- (\pgfkeysvalueof{/pgfplots/xmax},0.5);

    \path [name path=p3] (\pgfkeysvalueof{/pgfplots/xmin},0.25) -- (\pgfkeysvalueof{/pgfplots/xmax},0.25);

    \path [name path=p4] (\pgfkeysvalueof{/pgfplots/xmin},0.125) -- (\pgfkeysvalueof{/pgfplots/xmax},0.125);

    \draw [name path=d0,thick, densely dotted, intersection segments={of=pp and p4,sequence=L1 -- R2 -- L3}];

    \path [name path=d1, densely dotted, intersection segments={of=pp and p1,sequence=L1 -- R2 -- L3}];

    \draw [thick, densely dotted, intersection segments={of=pp and p1,sequence=R2}];

    \path [name path=d2, densely dotted, intersection segments={of=pp and p2,sequence=L1 -- R2 -- L3}];

    \draw [thick, densely dotted, intersection segments={of=pp and p2,sequence=R2}];

    \path [name path=d3,densely dotted, intersection segments={of=pp and p3,sequence=L1 -- R2 -- L3}];

    \draw [thick, densely dotted, intersection segments={of=pp and p3,sequence=R2}];

    \draw [thick,densely dotted,intersection segments={of=pp and p4,sequence=L2}];

    \draw [thick,densely dotted] (axis cs:\xt-0.5,0) -- (axis cs:\xt-0.5,1) (axis cs:\xt+0.5,0) -- (axis cs:\xt+0.5,1);

    \coordinate (M1) at (axis cs:\xt-0.2,0.19);
     \coordinate (M2) at (axis cs:\xt,0.19);
     \draw[->,red!60,thick] (M2) -- (M1);

    \addplot  [blue!30]  fill between [of=d1 and p0];

    \addplot  [gray!30]   fill between [of=d2 and d1];

    \addplot  [red!30]   fill between [of=d0 and d3];

    \node  [font=\footnotesize] at (axis cs:\xt,1.15) {$A_{0}(t)$};
    \node  [font=\footnotesize] at (axis cs:\xt,0.75) {$A_{1}(t)$};
    \node  [font=\footnotesize] at (axis cs:\xt,0.40) {$\vdots$};
    \node [red!60, font=\footnotesize]  at (axis cs:\xt-0.34,0.19) {$A_{m}(t)$};

    \draw [thick,densely dotted,intersection segments={of=pp and p1,sequence=R3}];

    \draw [thick,densely dotted,intersection segments={of=pp and p2,sequence=R3}];

    \draw [thick,densely dotted,intersection segments={of=pp and p3,sequence=R3}];

    \draw [thick,densely dotted,intersection segments={of=pp and p4,sequence=R3}];

    \coordinate (A1) at (axis cs:\xt+0.5,0);
    \coordinate (A2) at (axis cs:\xt+0.5,1);
    \coordinate (B1) at (axis cs:\xt+0.7,0);
    \coordinate (B2) at (axis cs:\xt+0.7,0.5);
    \coordinate (C1) at (axis cs:\xt+0.9,0);
    \coordinate (C2) at (axis cs:\xt+0.9,0.25);
    \coordinate (D1) at (axis cs:\xt+1.1,0);
    \coordinate (D2) at (axis cs:\xt+1.1,0.125);
  \end{axis}
  \draw [<->] ($(A1)+(1.2em,0)$) -- node [right,yshift=1em] {$1$} ($(A2)+(1.2em,0)$);
  \draw [<->] ($(B1)+(1.2em,0)$) -- node [right,font=\footnotesize,yshift=1.2em] {$\vdots$} ($(B2)+(1.2em,0)$);
  \draw [<->] ($(C1)+(1.2em,0)$) -- node [right,font=\footnotesize,yshift=0.6em] {$2^{-(m-1)}$} ($(C2)+(1.2em,0)$);
  \draw [<->] ($(D1)+(1.2em,0)$) -- node [right,font=\footnotesize] {$2^{-m}$} ($(D2)+(1.2em,0)$);
\end{tikzpicture}
\end{subfigure}\caption{The  sets $\mathcal{C}_m(t)$ (left)  and  $A_m(t)$ (right).}\label{fig-CA}
\end{figure}

	For any fixed $m\geq0$ (see \cite{BKNSW15}), we have 
	\begin{align*}
	\mathbb{E}[\psi_m(t)\psi_m(s)]&= \lambda(\mathcal{C}_m(t) \cap \mathcal{C}_m(s) ) 
	\\
	& = \left
	\{\begin{array}{ll}
		m\log2+1-2^m|t-s|& \text{if $|t-s|<2^{-m}$} 
		\vspace{2mm}
		\\
		\log\frac{1}{|t-s|} & \text{if $2^{-m}\leq|t-s|\leq1$}
	\end{array}\right..
	\end{align*}
	Note that from the above covariance kernel, we know that the stochastic process $\{\psi_m(t): t\in [0,1]\}$ is translation-invariant.

	For any fixed $\gamma\in(0,\sqrt{2})$, define the random measure $\mu_{\gamma,m}$ on the unit interval $[0,1]$ by
	\begin{align}\label{def-mum}
		\mu_{\gamma,m}(\mathrm{d}t): =\exp\Big({\gamma\psi_m(t)-\frac{\gamma^2}{2}\mathbb{E}[\psi_m^2(t)]}\Big)\mathrm{d}t.
	\end{align}
	This construction fits into the framework of Kahane's theory of Gaussian multiplicative chaos \cite{Kah85a, Kah87}, which implies that almost surely, as $m\to\infty$, the measure $\mu_{\gamma,m}$ tends to the Gaussian multiplicative chaos $\mu_{\gamma,\mathrm{GMC}}$ in the sense of weak convergence of measures:
	\begin{align}\label{weacontoGMC}
	\lim_{m\to \infty}\mu_{\gamma,m}=\mu_{\gamma,\mathrm{GMC}}.
	\end{align}

	Now for any $t\in[0,1]$, consider the Borel set
	\begin{align}\label{def-A0}
		A_0(t): =\Big\{(x,y)\in\mathbb{R}\times\mathbb{R}_+\,\Big|\,y>1,\,|x-t|<\frac{1}{2}\Big\}
	\end{align}
	and for any $m\geq1$,
	\begin{align}\label{def-Am}
	\begin{split}
		A_m(t): & =\mathcal{C}_m(t)\setminus\mathcal{C}_{m-1}(t)
\\
& =\Big\{(x,y)\in\mathbb{R}\times\mathbb{R}_+\,\Big|\,\max\big\{2|x-t|,2^{-m}\big\}<y\leq2^{-(m-1)}\Big\}.
\end{split}
	\end{align}
	See the right part in the Figure \ref{fig-CA} for the illustration of the sets $A_m(t)$. 
	
	For any $m\geq0$,  define the centered Gaussian process  $\varphi_m$ by 
	\begin{align}\label{def-phim}
		\varphi_{m}(t):=W(A_m(t)), \quad t \in [0,1].
	\end{align}
	The family of Borel sets $\{A_m(t):t\in[0,1],\,m\geq0\}$ satisfy  the following elementary properties:
	\begin{itemize}
		\item for any $t\in[0,1]$, all $A_m(t)$, $m\geq0$, are mutually disjoint and
		\[
		\mathcal{C}_m(t)=\bigsqcup_{j=0}^{m}A_j(t);
		\]
		\item  for any $m\neq k$ and any $t,s\in[0,1]$, $A_m(t)$ and $A_k(s)$ are disjoint;
		\item  for any $t_1, t_2\in[0,1]$ satisfying $|t_1-t_2|\geq2^{-(m-1)}$, $A_m(t_1)$ and $A_m(t_2)$ are disjoint. Consequently, for any sub-intervals $T,S\subset[0,1]$ satisfying
			\[
			\mathrm{dist}(T,S) = \inf\{|t_1-t_2|: t_1\in T \an t_2 \in S\} \geq  2^{-(m-1)},
			\]
the two subsets $\bigcup\limits_{t_1 \in T} A_m(t_1)$ and $\bigcup\limits_{t_2\in S} A_m(t_2)$ are disjoint.  See Figures \ref{t-12-dis} and \ref{TS-dis} for the illustrations. 
	\end{itemize}

	\begin{figure}[H]
	\centering
	\begin{tikzpicture}
  \def\xta{0.27}
  \def\xtb{0.73}
  \tikzset{>={Stealth}}
  \begin{axis}[
    axis equal image,
    scale only axis,
    width=8.5cm,
    axis x line=middle,
    axis y line=none,
    xtick={0,\xta,\xtb,1},
    xticklabels={0,$t_{1}$,$t_{2}$,$1$},
    xmin=-0.05, xmax=1.05,
    ymin=0,     ymax=0.275,
    tick align=outside,
    tick style={thin,black},
    every inner x axis line/.append style={-,thick},
  ]
    \draw [name path=p1,thick,densely dotted] (\pgfkeysvalueof{/pgfplots/xmin},0.25)  -- (\pgfkeysvalueof{/pgfplots/xmax},0.25);

    \draw [name path=p2,thick,densely dotted] (\pgfkeysvalueof{/pgfplots/xmin},0.125) -- (\pgfkeysvalueof{/pgfplots/xmax},0.125);

    \addplot [name path=g1,samples at={0,\xta,1},draw=none]{2*abs(x-\xta)};

    \addplot [name path=g2,samples at={0,\xtb,1},draw=none]{2*abs(x-\xtb)};

    \draw [name path=d1,thick, densely dotted, intersection segments={of=g1 and p2,sequence=L1 -- R2 -- L3}];

    \draw [name path=d2,thick,densely dotted, intersection segments={of=g2 and p2,sequence=L1 -- R2 -- L3}];

    \draw [name path=t1,thick, densely dotted, intersection segments={of=g1 and p1,sequence=R2}];

    \draw [name path=t2,thick, densely dotted, intersection segments={of=g2 and p1,sequence=R2}];

    \draw [thick,densely dotted,intersection segments={of=p2 and g1,sequence=R2}];

    \draw [thick,densely dotted,intersection segments={of=p2 and g2,sequence=R2}];

    \addplot  [red!30]  fill between [of=d1 and t1];

    \addplot  [blue!30] fill between [of=d2 and t2];

    \node at (axis cs:\xta,0.19) {$A_{m}(t_{1})$};

    \node at (axis cs:\xtb,0.19) {$A_{m}(t_{2})$};

    \addplot[mark=*,only marks,mark size=1.2pt] coordinates {(0.375,0) (0.625,0)};

    \coordinate (A1) at (axis cs:0.95,0);
    \coordinate (A2) at (axis cs:0.95,0.25);
    \coordinate (B1) at (axis cs:1,0);
    \coordinate (B2) at (axis cs:1,0.125);
    \coordinate (C1) at (axis cs:0.375,0);
    \coordinate (C2) at (axis cs:0.625,0);
  \end{axis}
  \draw [<->] ($(A1)+(0.6em,0)$) -- node [right,yshift=1em] {$2^{-(m-1)}$} ($(A2)+(0.6em,0)$);
  \draw [<->] ($(B1)+(0.6em,0)$) -- node [right,font=\footnotesize] {$2^{-m}$} ($(B2)+(0.6em,0)$);
  \draw ($(C1)+(0,-1.6em)$) -- (C1);
  \draw ($(C2)+(0,-1.6em)$) -- (C2);
  \draw [<->] ($(C1)+(0,-0.8em)$) -- node [fill=white,font=\footnotesize] {$2^{-(m-1)}$} ($(C2)+(0,-0.8em)$);
\end{tikzpicture}\caption{$|t_1-t_2|\geq2^{-(m-1)} \Longrightarrow A_m(t_1)$ and $A_m(t_2)$ are disjoint.}\label{t-12-dis}
\end{figure}

\begin{figure}[H]
\begin{tikzpicture}
  \def\xta{0.27}
  \def\xtb{0.73}
  \tikzset{>={Stealth}}
  \begin{axis}[
    axis equal image,
    scale only axis,
    width=8.5cm,
    axis x line=middle,
    axis y line=none,
    xtick={0,1},
    xticklabels={0,1},
    xmin=-0.05, xmax=1.05,
    ymin=0,     ymax=0.275,
    tick align=outside,
    tick style={thin,black},
    every inner x axis line/.append style={-,thick},
  ]

    \path [name path=p0] (\pgfkeysvalueof{/pgfplots/xmin},0)  -- (\pgfkeysvalueof{/pgfplots/xmax},0);

    \draw [name path=p1,thick,densely dotted] (\pgfkeysvalueof{/pgfplots/xmin},0.25)  -- (\pgfkeysvalueof{/pgfplots/xmax},0.25);

    \draw [name path=p2,thick,densely dotted] (\pgfkeysvalueof{/pgfplots/xmin},0.125) -- (\pgfkeysvalueof{/pgfplots/xmax},0.125);

    \addplot [name path=g1,samples at={0,\xta,1},draw=none]{2*abs(x-\xta)-0.21};

    \addplot [name path=g2,samples at={0,\xtb,1},draw=none]{2*abs(x-\xtb)-0.21};

    \draw [name path=d1,thick, densely dotted, intersection segments={of=g1 and p2,sequence=L1 -- R2 -- L3}];

    \draw [name path=d2,thick, densely dotted, intersection segments={of=g2 and p2,sequence=L1 -- R2 -- L3}];

    \draw [name path=t1,thick,densely dotted, intersection segments={of=g1 and p1,sequence=R2}];

    \draw [name path=t2,thick,densely dotted, intersection segments={of=g2 and p1,sequence=R2}];

    \draw [thick,densely dotted,intersection segments={of=p2 and g1,sequence=R2}];

    \draw [thick,densely dotted,intersection segments={of=p2 and g2,sequence=R2}];

    \addplot  [red!30] fill between [of=d1 and t1];

    \addplot   [blue!30] fill between [of=d2 and t2];

    \node at (axis cs:\xta,0.19) {$\bigcup\limits_{t_1\in T}A_{m}(t_1)$};

    \node at (axis cs:\xtb,0.19) {$\bigcup\limits_{t_2\in S}A_{m}(t_2)$};

    \coordinate (A1) at (axis cs:0.95,0);
    \coordinate (A2) at (axis cs:0.95,0.25);
    \coordinate (B1) at (axis cs:1,0);
    \coordinate (B2) at (axis cs:1,0.125);
    \coordinate (C1) at (axis cs:0.375,0);
    \coordinate (C2) at (axis cs:0.625,0);

    \coordinate (D1) at (axis cs:2*\xta-0.375,0);
    \coordinate (D2) at (axis cs:2*\xtb-0.625,0);

  \end{axis}
  \draw [<->] ($(A1)+(0.6em,0)$) -- node [right,yshift=1em] {$2^{-(m-1)}$} ($(A2)+(0.6em,0)$);
  \draw [<->] ($(B1)+(0.6em,0)$) -- node [right,font=\footnotesize] {$2^{-m}$} ($(B2)+(0.6em,0)$);
  \draw ($(C1)+(0,-1.6em)$) -- (C1);
  \draw ($(C2)+(0,-1.6em)$) -- (C2);
  \draw [<->] ($(C1)+(0,-0.8em)$) -- node [fill=white,font=\footnotesize] {$2^{-(m-1)}$} ($(C2)+(0,-0.8em)$);
  \draw[decoration={brace,mirror,raise=2pt},decorate,thick,red!60]  (D1) -- node[below=6pt,font=\footnotesize] {$T$} (C1);
  \draw[decoration={brace,mirror,raise=2pt},decorate,thick,blue!60] (C2) -- node[below=6pt,font=\footnotesize] {$S$} (D2);
  \fill (C1) circle (1.2pt);
  \fill (C2) circle (1.2pt);
\end{tikzpicture}\caption{$\mathrm{dist}(S,T)\geq2^{-(m-1)}\Longrightarrow \bigcup\limits_{t_1\in T}A_m(t_1)\cap  \bigcup\limits_{t_2\in S}A_m(t_2) = \varnothing$.}\label{TS-dis}
\end{figure}

	For any fixed $\gamma\in(0,\sqrt{2})$ and any $m\geq0$, define  the  stochastic process 
	\begin{align}\label{def-Xm}
		X_{m}(t)=X_{\gamma,m}(t): =\exp\Big({\gamma\varphi_m(t)-\frac{\gamma^2}{2}\mathbb{E}[\varphi_m^2(t)]}\Big), \quad t\in [0,1].
	\end{align}
	Then the properties on the family $\{A_m(t):t\in[0,1],\,m\geq0\}$ imply the following elementary properties of $\psi_m$, $\varphi_m$ and $X_m$. 
	
	\begin{properties}\label{elem-prop}
		The stochastic processes $\psi_m$, $\varphi_m$ and $X_m$ satisfy  the following properties:
		\begin{itemize}
			\item[(P1)] The functions $\psi_m$, $\varphi_m$ defined in \eqref{def-psim} and \eqref{def-phim} satisfy 
			\[
				\psi_m(t)=\sum_{j=0}^{m}\varphi_{j}(t)
			\]
			and hence the measure $\mu_{\gamma, m}$ defined in \eqref{def-mum} can be written as 
			\begin{align}\label{def-mu-gm}
				\mu_{\gamma,m}(\mathrm{d}t)=\exp\Big(\sum_{j=0}^{m}\big({\gamma\varphi_j(t)-\frac{\gamma^2}{2}\mathbb{E}[\varphi_j^2(t)]}\big)\Big)\mathrm{d}t =  \Big[\prod_{j=0}^{m}X_{j}(t) \Big] \mathrm{d}t;
			\end{align}
			\item[(P2)] The stochastic processes $\{\varphi_{m}\}_{m\geq0}$ are  independent and hence so are the processes $\{X_m\}_{m\geq0};$  
			\item[(P3)] For any sub-intervals $T,S\subset[0,1]$ satisfying
			\[
			\mathrm{dist}(T,S) = \inf\{|t_1-t_2|: t_1\in T \an t_2 \in S\} \geq  2^{-(m-1)},
			\]
			the stochastic processes $\{\varphi_{m}(t):t\in T\}$ and $\{\varphi_{m}(t): t\in S\}$ are independent and hence so are the pairs $\{X_m(t):t\in T\}$ and $\{X_m(t): t\in S\}$.
		\end{itemize}
	\end{properties}

	\begin{lemma}\label{p-moment-X_m}
		For any $\gamma\in(0,\sqrt{2})$, any $t\in[0,1]$ and any $p>0$, we have
		\[
		\mathbb{E}[X_j^p(t)]=\left\{\begin{array}{cl}
			e^{\frac{p(p-1)\gamma^2}{2}} &  \text{if $j=0$}
			\vspace{2mm}
			\\
			2^{\frac{p(p-1)\gamma^2}{2}} & \text{if $j\geq1$}
		\end{array}\right..
		\]
	\end{lemma}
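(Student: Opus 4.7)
My plan is to reduce the computation to two ingredients: the Gaussian Laplace transform formula, and an explicit evaluation of $\lambda(A_j(t))$ under the hyperbolic measure. Since $\varphi_j(t) = W(A_j(t))$ is a centered Gaussian with variance $\sigma_j^2(t) := \lambda(A_j(t))$, the standard identity $\E[e^{pZ}] = e^{p^2\sigma^2/2}$ for $Z\sim N(0,\sigma^2)$ gives, for every $p>0$,
\[
\E[X_j^p(t)] = e^{-\frac{p\gamma^2}{2}\sigma_j^2(t)}\,\E\bigl[e^{p\gamma\varphi_j(t)}\bigr] = e^{\frac{p(p-1)\gamma^2}{2}\sigma_j^2(t)}.
\]
Thus the whole claim is reduced to showing that $\sigma_0^2(t)=1$ and $\sigma_j^2(t)=\log 2$ for all $j\ge 1$ and all $t\in[0,1]$.

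For $j=0$, I would plug the definition \eqref{def-A0} into \eqref{hyperbolic measure}: the set $A_0(t)$ factorizes as an $x$-interval of length $1$ and a $y$-ray $(1,\infty)$, and
\[
\lambda(A_0(t)) = \int_{t-1/2}^{t+1/2}\!\!\int_{1}^{\infty}\frac{\dd y\,\dd x}{y^2} = 1\cdot 1 = 1.
\]
For $j\ge 1$, I would use the description \eqref{def-Am} of $A_j(t)$ and integrate in $y$ first, noting that for each fixed $y\in(2^{-j},2^{-(j-1)}]$ the slice in $x$ is the interval $(t-y/2,t+y/2)$ of length $y$, so
\[
\lambda(A_j(t)) = \int_{2^{-j}}^{2^{-(j-1)}}\frac{y}{y^2}\,\dd y = \int_{2^{-j}}^{2^{-(j-1)}}\frac{\dd y}{y} = \log 2,
\]
independently of $t$ by translation invariance in $x$. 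Inserting these values of $\sigma_j^2(t)$ into the exponent above yields exactly the two cases stated.

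There is no real obstacle here; the only subtlety is making sure the integration order in the $y$-strip for $j\ge 1$ respects the constraint $y>2|x-t|$, which correctly produces the symmetric $x$-interval of length $y$. The case $j=0$ is entirely separate because the threshold $y>1$ in \eqref{def-A0} replaces the slanted boundary $y>2|x-t|$, which is why $\sigma_0^2=1$ rather than $\log 2$.
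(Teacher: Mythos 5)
Your proposal is correct and follows essentially the same route as the paper: reduce to $\E[X_j^p(t)]=\exp\bigl(\tfrac{p(p-1)\gamma^2}{2}\lambda(A_j(t))\bigr)$ via the Gaussian Laplace transform and then evaluate $\lambda(A_0(t))=1$, $\lambda(A_j(t))=\log 2$ for $j\ge 1$. The only difference is that you carry out the hyperbolic-area integrals explicitly, whereas the paper simply records these values in \eqref{two-area}; both computations are correct.
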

	\begin{proof}
		Fix any $\gamma\in(0,\sqrt{2})$, any $t\in[0,1]$ and any $p>0$.  By the definitions \eqref{def-A0}, \eqref{def-Am},  \eqref{def-phim}, \eqref{def-Xm} for $A_j(t)$,  $\varphi_j(t)$ and $X_j(t)$ respectively,   for any  $j\ge 0$,  we have 
		\[
		\mathbb{E}[\varphi_j^2(t)]=\lambda(A_j(t)) \an 
		X_j^p(t)=\exp\Big({p\gamma\varphi_j(t)-\frac{p\gamma^2}{2}\mathbb{E}[\varphi_j^2(t)]}\Big).
		\]
		Hence 
		\[
		\mathbb{E}[X_j^p(t)]=\exp\Big({\frac{p^2\gamma^2}{2}\mathbb{E}[\varphi_j^2(t)]-\frac{p\gamma^2}{2}\mathbb{E}[\varphi_j^2(t)]}\Big)=\exp\Big(\frac{p(p-1)\gamma^2}{2} \lambda(A_j(t))\Big).
		\]
		Now  by the definition \eqref{hyperbolic measure} of the hyperbolic measure $\lambda$ on the upper half-plane and the definitions \eqref{def-A0}, \eqref{def-Am} for the subsets $A_j(t)$,   
		\begin{align}\label{two-area}
			\lambda(A_0(t))= 1 \an \lambda(A_j(t)) = \log 2  \quad \text{if  $j\ge 1$}. 
		\end{align}
		This completes the proof of the lemma.
	\end{proof}

	\begin{lemma}\label{Holder-p-moment-X_m}
		For any $\gamma\in(0,\sqrt{2})$  and any $p>0$, we have
		\[
		\sup_{j\geq0}\sup_{0<|t-s|\leq 2^{-j} }\mathbb{E}\Big[\Big| \frac{X_j(t)-X_j(s)}{\sqrt{2^{j}  |t-s|}} \Big|^p\Big]<\infty.
		\]
	\end{lemma}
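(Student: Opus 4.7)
The plan is to exploit the translation invariance of the process $\{\varphi_j(t)\}$ combined with standard Gaussian moment estimates. First, I would observe that by the explicit computation in \eqref{two-area}, the variance $\mathbb{E}[\varphi_j^2(t)] = \lambda(A_j(t))$ is a constant $c_j \in \{1, \log 2\}$ depending only on $j$. Consequently, the normalizing factor in the definition \eqref{def-Xm} of $X_j(t)$ is $t$-independent, and I obtain the multiplicative identity
\[
X_j(t) - X_j(s) = X_j(s) \bigl( e^{\gamma \Delta_j(t,s)} - 1 \bigr), \qquad \Delta_j(t,s) := \varphi_j(t) - \varphi_j(s).
\]
This separates the problem into controlling the pointwise moment of $X_j(s)$ (done by Lemma \ref{p-moment-X_m}) and the increment factor $e^{\gamma \Delta_j(t,s)} - 1$.

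Next, I would compute the variance of the Gaussian increment $\Delta_j(t,s)$. Since $\varphi_j(t) = W(A_j(t))$ is a white noise integral,
\[
\sigma_{j}^{2}(t,s) := \mathbb{E}[\Delta_j(t,s)^2] = \lambda\bigl( A_j(t) \bigtriangleup A_j(s) \bigr).
\]
A direct inspection of the trapezoidal set $A_j(t)$ (using the fiberwise description $\{|x-t| < y/2\}$ at each height $y \in [2^{-j}, 2^{-(j-1)}]$) together with the definition \eqref{hyperbolic measure} of $\lambda$ shows that for $|t-s| \leq 2^{-j}$,
\[
\sigma_j^2(t,s) \leq C\, 2^{j}\, |t-s|
\]
for an absolute constant $C$; the case $j=0$ is handled identically with the half-strip $A_0(t)$. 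Thus $\sigma_j(t,s)$ is bounded uniformly in $j$ and in $|t-s| \leq 2^{-j}$.

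Finally, I would combine these inputs via H\"older's inequality:
\[
\mathbb{E}\bigl[ |X_j(t)-X_j(s)|^p \bigr] \leq \mathbb{E}[X_j(s)^{2p}]^{1/2} \cdot \mathbb{E}\bigl[ |e^{\gamma \Delta_j(t,s)} - 1|^{2p} \bigr]^{1/2}.
\]
The first factor is uniformly bounded by Lemma \ref{p-moment-X_m}. For the second factor, the elementary inequality $|e^x - 1| \leq |x| e^{|x|}$ combined with standard Gaussian moment identities (all exponential moments $\mathbb{E}[e^{a|\Delta_j|}]$ are controlled since $\sigma_j$ is bounded) yields
\[
\mathbb{E}\bigl[ |e^{\gamma \Delta_j(t,s)} - 1|^{2p} \bigr] \leq C_{p,\gamma}\, \sigma_j(t,s)^{2p} \leq C'_{p,\gamma}\, (2^{j}|t-s|)^{p}.
\]
Substituting back gives $\mathbb{E}[|X_j(t)-X_j(s)|^p] \leq C''_{p,\gamma} (2^j|t-s|)^{p/2}$, which is exactly the desired uniform bound. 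The only mildly delicate step is the geometric calculation of $\lambda(A_j(t) \bigtriangleup A_j(s))$; everything else is essentially a consequence of translation invariance of $\{\varphi_j\}$ and the log-normal structure of $X_j$.
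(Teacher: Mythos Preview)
Your argument is correct and complete. The route differs from the paper's in one structural respect worth noting. You factor $X_j(t)-X_j(s)=X_j(s)\bigl(e^{\gamma\Delta_j}-1\bigr)$ and then, because $X_j(s)$ and $\Delta_j=\varphi_j(t)-\varphi_j(s)$ are \emph{not} independent, you invoke Cauchy--Schwarz to separate the two factors. The paper instead performs a three-way white-noise split $\varphi_j(t)=\xi_{\mathrm{left}}+\xi_{\mathrm{center}}$, $\varphi_j(s)=\xi_{\mathrm{right}}+\xi_{\mathrm{center}}$ with $\xi_{\mathrm{left}},\xi_{\mathrm{right}},\xi_{\mathrm{center}}$ mutually independent, so that
\[
X_j(t)-X_j(s)=\bigl[e^{\gamma\xi_{\mathrm{left}}}-e^{\gamma\xi_{\mathrm{right}}}\bigr]\cdot e^{\gamma\xi_{\mathrm{center}}-\gamma^2 c_j/2},
\]
and the $p$-th moment factors \emph{exactly} by independence, with the first factor handled by a short lemma on two i.i.d.\ Gaussians. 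Both approaches compute the same geometric quantity $\lambda(A_j(t)\bigtriangleup A_j(s))\asymp 2^j|t-s|$; the paper's buys an exact factorization and slightly cleaner constants, while yours is more portable, requiring only the covariance structure rather than the specific white-noise independence.
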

	
	\begin{lemma}\label{lem-two-gaussian}
		Let $g_1$, $g_2$ be i.i.d. standard normal random variables. Then for any $p>0$,  there exists a constant $C_p>0$ such that  for any $\sigma \in [0,\sqrt{2}]$, 
		\[
		\E [| \exp(\sigma g_1) - \exp(\sigma g_2) |^p] \le  C_p \sigma^p. 
		\]
	\end{lemma}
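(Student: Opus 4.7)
The plan is to reduce the $L^p$ bound to a pointwise inequality coming from the mean value theorem applied to $\exp$. I would start from the elementary estimate $|e^a - e^b| \le |a - b|(e^a + e^b)$, valid for all reals $a, b$; substituting $a = \sigma g_1$ and $b = \sigma g_2$ gives the pointwise bound
\[
|\exp(\sigma g_1) - \exp(\sigma g_2)| \le \sigma |g_1 - g_2| \bigl(\exp(\sigma g_1) + \exp(\sigma g_2)\bigr).
\]
The factor $\sigma$ on the right is precisely the one needed on the right-hand side of the lemma.

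Raising to the $p$-th power and using $(x+y)^p \le 2^p(x^p+y^p)$ when $p \ge 1$ (respectively $(x+y)^p \le x^p + y^p$ when $0 < p \le 1$), I obtain a deterministic estimate of the form
\[
|\exp(\sigma g_1) - \exp(\sigma g_2)|^p \le C_p \sigma^p |g_1 - g_2|^p \bigl(\exp(p\sigma g_1) + \exp(p\sigma g_2)\bigr).
\]
Taking expectations and applying the Cauchy--Schwarz inequality separates the polynomial factor from the exponential factor:
\[
\E\bigl[|g_1-g_2|^p \exp(p\sigma g_1)\bigr] \le \E\bigl[|g_1-g_2|^{2p}\bigr]^{1/2} \cdot \E\bigl[\exp(2p\sigma g_1)\bigr]^{1/2}.
\]

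The first factor is a finite $p$-dependent constant, since $g_1 - g_2$ is Gaussian of variance $2$. For the second, the standard Gaussian moment generating formula $\E[\exp(\alpha g)] = \exp(\alpha^2/2)$ gives $\E[\exp(2p\sigma g_1)] = \exp(2p^2 \sigma^2) \le \exp(4p^2)$ uniformly for $\sigma \in [0,\sqrt{2}]$, and the symmetric term in $g_2$ admits the same bound. Combining produces the claimed inequality with an explicit $C_p$. I do not anticipate any real obstacle here: the argument is a routine Gaussian computation, and the only mild care required is to use the correct elementary inequality when raising the sum $\exp(\sigma g_1) + \exp(\sigma g_2)$ to the power $p$ in the two regimes $p < 1$ and $p \ge 1$.
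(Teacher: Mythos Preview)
Your proof is correct and follows essentially the same approach as the paper: both start from the mean value inequality for $e^x$ to extract the factor $\sigma$, then bound the remaining exponential-times-polynomial moment uniformly over $\sigma\in[0,\sqrt{2}]$. The only cosmetic difference is that the paper immediately replaces $\sigma$ by $\sqrt{2}$ inside the exponential (writing $|e^{\sigma g_1}-e^{\sigma g_2}|\le \sigma|g_1-g_2|\exp(\sqrt{2}|g_1|+\sqrt{2}|g_2|)$) and then takes a single expectation, whereas you keep $\exp(\sigma g_i)$ and use Cauchy--Schwarz plus the Gaussian MGF to get the uniform bound; both routes are equally valid.
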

	\begin{proof}
		By Lagrange's mean value theorem for the function $x\mapsto e^x$, for all $\sigma\in [0,\sqrt{2}]$, we have  
		\[
		| \exp(\sigma g_1) - \exp(\sigma g_2) |\le  \sigma  | g_1-g_2|  \exp(\sqrt{2}|g_1| + \sqrt{2}|g_2|). 
		\]
		We complete the proof by taking $C_p = \E[| g_1-g_2|^p \exp(p\sqrt{2}|g_1| +p\sqrt{2} |g_2|) ] <\infty$. 
	\end{proof} 
	
	\begin{proof}[Proof of Lemma~\ref{Holder-p-moment-X_m}]
		Since the stochastic process $X_j$ is translation-invariant, we only need to deal with $X_j(t) - X_j(0)$.  Fix $j\ge 0$ and $0<t\le 2^{-j}$.   Define three independent centered Gaussian random variables  (see Figure \ref{3-ind} for an illustration) as 
		\[
		\xi_{\mathrm{left}}(t) =  W(A_j(t)\setminus A_j(0)), \quad  \xi_{\mathrm{right}}(t)  =  W(A_j(0)\setminus A_j(t))
		\]
		and 
		\[
		  \xi_{\mathrm{center}}(t) = W(A_j(t) \cap A_j(0)). 
		\]
		\begin{figure}[H]
		\begin{tikzpicture}
  \def\xta{0.035}
  \def\xtb{0.090}
  \tikzset{>={Stealth}}
  \begin{axis}[
    axis equal image,
    scale only axis,
    width=5.5cm,
    axis x line=middle,
    axis y line=none,
    xtick={\xta,\xtb,\xta+0.125},
    xticklabels={$0$\vphantom{$2^{-m}$},$t$\vphantom{$2^{-j}$},$2^{-j}$},
    xmin=-0.125, xmax=0.375,
    ymin=0,     ymax=0.275,
    tick align=outside,
    tick style={thin,black},
    every inner x axis line/.append style={-,thick},
  ]
    \draw [name path=p1,thick,densely dotted] (\pgfkeysvalueof{/pgfplots/xmin},0.25)  -- (\pgfkeysvalueof{/pgfplots/xmax},0.25);

    \draw [name path=p2,thick,densely dotted] (\pgfkeysvalueof{/pgfplots/xmin},0.125) -- (\pgfkeysvalueof{/pgfplots/xmax},0.125);

    \addplot [name path=g1,draw=none] coordinates {(\xta-0.5,1) (\xta,0) (\xta+0.5,1)};

    \addplot [name path=g2,draw=none] coordinates {(\xtb-0.5,1) (\xtb,0) (\xtb+0.5,1)};

    \addplot [name path=g3,draw=none] coordinates {(\xtb-0.5,1) ((\xta+\xtb)/2,\xtb-\xta) (\xta+0.5,1)};

    \addplot [name path=g4,draw=none] coordinates {(\xta-0.5,1) ((\xta+\xtb)/2,\xta-\xtb) (\xtb+0.5,1)};

    \draw [name path=d1,thick,intersection segments={of=g1 and p2,sequence=L1 -- R2 -- L3}];
    \draw [name path=t1,thick,intersection segments={of=g1 and p1,sequence=R2}];
    \draw [name path=d2,thick,intersection segments={of=g2 and p2,sequence=L1 -- R2 -- L3}];
    \draw [name path=t2,thick,intersection segments={of=g2 and p1,sequence=R2}];
    \path [name path=d3,thick,intersection segments={of=g3 and p2,sequence=L1 -- R2 -- L3}];
    \draw [name path=t3,thick,intersection segments={of=g3 and p1,sequence=R2}];

    \addplot [red!30]   fill between [of=d1 and t1];
    \addplot [gray!30] fill between [of=d2 and t2];
    \addplot [blue!30]  fill between [of=d3 and t3];

    \draw [thick,densely dotted,intersection segments={of=p2 and g1,sequence=R2}];

    \draw [thick,densely dotted,intersection segments={of=p2 and g2,sequence=R2}];

    \coordinate (A1) at (axis cs:\xta-0.0625,0.1875);
    \coordinate (A2) at (axis cs:\xta-0.15,0.325);
    \coordinate (B1) at (axis cs:\xtb+0.0625,0.1875);
    \coordinate (B2) at (axis cs:\xtb+0.15,0.325);

    \coordinate (C1) at (axis cs:0.0625,0.1875);
    \coordinate (C2) at (axis cs:0.0625,0.275);

    \coordinate (D1) at (axis cs:0.25,0);
    \coordinate (D2) at (axis cs:0.25,0.25);
    \coordinate (E1) at (axis cs:0.3,0);
    \coordinate (E2) at (axis cs:0.3,0.125);

    \coordinate (F1) at (axis cs:0,0);
    \coordinate (F2) at (axis cs:0.125,0);

  \end{axis}
  \draw[->,red!60,thick] (A1) -| (A2);
  \draw[->,gray,thick] (B1) -| (B2);
  \draw[->,blue!60,thick] (C1) -- (C2);
  \draw [<->] ($(D1)+(0.6em,0)$) -- node [right,yshift=1em] {$2^{-(j-1)}$} ($(D2)+(0.6em,0)$);
  \draw [<->] ($(E1)+(0.6em,0)$) -- node [right,font=\footnotesize] {$2^{-j}$} ($(E2)+(0.6em,0)$);
  \node [anchor=south,red!60]  [font=\footnotesize]  at (A2) {$A_{j}(0)\setminus A_{j}(t)$};
  \node [anchor=south,gray] [font=\footnotesize]  at (B2) {$A_{j}(t)\setminus A_{j}(0)$};
  \node [anchor=south,blue!60] [font=\footnotesize]  at (C2) {$A_{j}(t)\cap A_{j}(0)$};
\end{tikzpicture}\caption{The regions corresponding to $\xi_{\mathrm{left}}(t), \xi_{\mathrm{right}}(t), \xi_{\mathrm{center}}(t)$.}\label{3-ind}
		\end{figure}

		\noindent Then we have
		\begin{align}\label{var-2jt}
			\Var (\xi_{\mathrm{left}}(t)) = \Var(\xi_{\mathrm{right}}(t))    = \lambda(A_j(0)\setminus A_j(t)) =\left\{\begin{array}{ll}
				t &  \text{if $j=0$}
				\vspace{2mm}
				\\
				2^{j-1} t & \text{if $j\geq1$}
			\end{array}\right.
		\end{align}
		and by \eqref{two-area}, 
		\[
		\Var(\xi_{\mathrm{center}}(t)) =  \lambda(A_j(t) \cap A_j(0)) \le \lambda(A_j(0)) \le 1. 
		\]
		Then, by writing  $
		\varphi_j(t) = \xi_{\mathrm{left}}(t) + \xi_{\mathrm{center}}(t)$, $\varphi_j(0)  = \xi_{\mathrm{right}}(t)+\xi_{\mathrm{center}}(t)$ and noting that  
		\[
		\E[\varphi_j^2(t)] =\E[\varphi_j^2(0)] =  \lambda (A_j(0)), 
		\]
		we obtain 
		\begin{align*}
			X_j(t)- X_j(0)&  =  \exp\Big({\gamma\varphi_j(t)-\frac{\gamma^2}{2}\mathbb{E}[\varphi_j^2(t)]}\Big)  - \exp\Big({\gamma\varphi_j(0)-\frac{\gamma^2}{2}\mathbb{E}[\varphi_j^2(0)]}\Big)
			\\
			& = \Big[ \exp (\gamma \xi_{\mathrm{left}}(t) )  -  \exp (\gamma \xi_{\mathrm{right}}(t))\Big] \exp\Big(\gamma  \xi_{\mathrm{center}}(t)-\frac{\gamma^2}{2}\lambda(A_j(0))\Big). 
		\end{align*}
		Therefore, by the independence of $\xi_{\mathrm{left}}(t)$, $\xi_{\mathrm{right}}(t)$, $\xi_{\mathrm{center}}(t)$, we obtain 
		\begin{align*}
		& \E[|X_j(t) - X_j(0)|^p]   
		\\
		=&  \E\Big[\Big| \exp (\gamma \xi_{\mathrm{left}}(t) )  -  \exp (\gamma \xi_{\mathrm{right}}(t))\Big|^p\Big]  \cdot \E\Big[\exp\Big(p\gamma \xi_{\mathrm{center}}(t)-\frac{p\gamma^2}{2}\lambda(A_j(0))\Big)\Big]. 
		\end{align*}
		On the one hand, we have 
		\begin{align*}
			\E\Big[\exp\Big(p\gamma \xi_{\mathrm{center}}(t)-\frac{p\gamma^2}{2}\lambda(A_j(0))\Big)\Big]
			&  =    \exp\Big(\frac{p^2\gamma^2 \Var(\xi_{\mathrm{center}}(t))}{2} - \frac{p\gamma^2 \lambda(A_j(0))}{2}\Big)
			\\
			& \le \exp\Big(\frac{p(p-1)\gamma^2 \lambda(A_j(0))}{2}\Big) 
			\\
			& \le \exp\Big(\frac{p(p-1)\gamma^2}{2}\Big).  
		\end{align*}
		On the other hand, note that  $\xi_{\mathrm{left}}(t)$ and $\xi_{\mathrm{right}}(t)$ are i.i.d. centered Gaussian random variables,  by Lemma~\ref{lem-two-gaussian} and \eqref{var-2jt},  for any $\gamma\in (0, \sqrt{2})$ and any $0<t\le 2^{-j}$,  we have 
		\[
		\sigma: = \gamma \sqrt{\Var(\xi_{\mathrm{left}}(t))} \le \sqrt{2}  \cdot \sqrt{ 2^{{j}} t}=\sqrt{2^{j+1} t} \le \sqrt{2}. 
		\]
		Hence we get
		\begin{align*}
		\E\Big[\big| \exp (\gamma \xi_{\mathrm{left}}(t) )  -  \exp (\gamma \xi_{\mathrm{right}}(t))\big|^p\Big]  & \le C_p \Big(\gamma  \sqrt{\Var(\xi_{\mathrm{left}}(t))}\Big)^p
		\\
		&  \le   C_p ( \sqrt{2^{j+1} t} )^p 
        \\
        &   = \sqrt{2}^{\,p}C_p 2^{jp/2} t^{p/2}. 
		\end{align*}
		Then the desired inequality follows immediately.
	\end{proof}
	
	\begin{corollary}\label{cor-sup-X}
		For any $\gamma\in(0,\sqrt{2})$, any $q>2$ and any integer $j\ge 0$, there exists a modification $\widetilde{X}_j$ of $X_j$ such that 
		\begin{align}\label{Xj-q}
			\E\Big[ \Big( \sup_{t\ne s} \frac{|\widetilde{X}_j(t)-\widetilde{X}_j(s)|}{|t-s|^\alpha}\Big)^{q} \Big] <\infty
		\end{align}
		for any
		\[
		0\le \alpha < \frac{1}{2}- \frac{1}{q}.
		\]
		In particular, we have 
		\[
		\E\Big[\sup_{t\in [0,1]} \widetilde{X}_j(t)^q\Big]<\infty. 
		\]
	\end{corollary}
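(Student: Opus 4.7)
The plan is to deduce this from the Hölder moment estimate of Lemma~\ref{Holder-p-moment-X_m} via a quantitative version of the Kolmogorov--Chentsov continuity theorem, which yields both a continuous modification and an $L^q$ bound on its Hölder seminorm, not merely almost-sure Hölder continuity.

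As a first step, I would upgrade Lemma~\ref{Holder-p-moment-X_m} to a two-point moment estimate valid on all of $[0,1]^2$. Taking $p=q$ there gives
\[
\E[|X_j(t)-X_j(s)|^q]\le C_{q,\gamma}\,2^{jq/2}|t-s|^{q/2}\qquad\text{for }|t-s|\le 2^{-j}.
\]
For $|t-s|>2^{-j}$, Minkowski's inequality together with Lemma~\ref{p-moment-X_m} gives the trivial bound $\E[|X_j(t)-X_j(s)|^q]\le C'_{q,\gamma}$, which is automatically dominated by $C'_{q,\gamma}(2^{j/2}|t-s|^{1/2})^q$ in this regime because $2^{j/2}|t-s|^{1/2}>1$. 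Hence there is a constant $C_{j,q,\gamma}$ such that
\[
\E[|X_j(t)-X_j(s)|^q]\le C_{j,q,\gamma}|t-s|^{q/2}\qquad\text{for all }t,s\in[0,1].
\]
Since $q>2$, we may write $q/2=1+\beta$ with $\beta:=q/2-1>0$.

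Next, I would run the standard dyadic chaining argument to build the modification and control its Hölder seminorm simultaneously. Setting $K_n:=\max_{0\le k<2^n}|X_j((k+1)2^{-n})-X_j(k2^{-n})|$, a union bound gives $\E[K_n^q]\le 2^n\cdot C_{j,q,\gamma}\cdot 2^{-nq/2}=C_{j,q,\gamma}\,2^{-n(q/2-1)}$. For any fixed $\alpha\in[0,1/2-1/q)$, the random variable $Z:=\sup_{n\ge 0}2^{n\alpha}K_n$ then satisfies
\[
\E[Z^q]\le \sum_{n\ge 0}2^{n\alpha q}\,\E[K_n^q]\le C_{j,q,\gamma}\sum_{n\ge 0}2^{-nq(1/2-1/q-\alpha)}<\infty,
\]
the last sum converging because $1/2-1/q-\alpha>0$. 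The classical telescoping estimate on dyadic rationals shows that the restriction of $X_j$ to the dyadics is a.s.\ $\alpha$-Hölder with seminorm bounded by a constant multiple of $Z$, extends uniquely to a continuous process $\widetilde X_j$ on $[0,1]$, and this extension is a modification of $X_j$ because $\E[|X_j(t_n)-X_j(t)|^q]\to 0$ along dyadic $t_n\to t$. This delivers \eqref{Xj-q}.

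The ``in particular'' statement is then immediate since, for any $\alpha\in(0,1/2-1/q)$,
\[
\sup_{t\in[0,1]}|\widetilde X_j(t)|\le |\widetilde X_j(0)|+\sup_{s\ne r}\frac{|\widetilde X_j(s)-\widetilde X_j(r)|}{|s-r|^\alpha},
\]
and both terms have finite $q$-th moment (the first by Lemma~\ref{p-moment-X_m}, the second by \eqref{Xj-q}). No serious obstacle is expected; the only point requiring care is that the chaining argument yields an $L^q$ bound on the Hölder seminorm rather than only an almost-sure statement, but this strengthening is automatic from the control on $\E[Z^q]$ established above.
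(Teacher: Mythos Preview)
Your proposal is correct and follows essentially the same approach as the paper: extend the moment bound of Lemma~\ref{Holder-p-moment-X_m} from $|t-s|\le 2^{-j}$ to all of $[0,1]^2$ (the paper simply says ``by changing the constant $C(j,q)$ if necessary''), and then invoke the quantitative Kolmogorov--Chentsov theorem. The paper cites Revuz--Yor for this last step, whereas you spell out the dyadic chaining that underlies it; you are also more explicit about the extension step and the derivation of the ``in particular'' clause, both of which the paper leaves implicit.
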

	\begin{proof}
		Fix any $\gamma\in(0,\sqrt{2})$, any $q>2$ and any integer $j\ge 0$.  Lemma~\ref{Holder-p-moment-X_m} implies that  when $|t-s|<2^{-j}$, 
		\begin{align}\label{av-holder}
			\E[|X_j(t) - X_j(s)|^q] \le C  2^{jq/2} |t-s|^{q/2} = C(j,q)\cdot |t-s|^{q/2}. 
		\end{align}
		By changing the constant $C(j,q)$ if necessary, the inequality \eqref{av-holder} holds for all pairs $(t, s)\in [0,1]^2$. 
		The standard Kolmogorov's continuity theorem (see, e.g., \cite[Chapter~I, Theorem~2.1]{RY99}) now implies the desired inequality. 
	\end{proof}
	
	\begin{corollary}\label{cor-holder}
		For any $\gamma\in(0,\sqrt{2})$, any $\alpha\in [0, 1/2)$ and any integer $j\ge 0$, there exists a modification  $\widetilde{X}_j$ of $X_j$ such that $\widetilde{X}_j$ is H\"older continuous of order $\alpha$ and hence 
		\[
		\prod_{j=0}^m \widetilde{X}_j (t)
		\]
		is H\"older continuous of order $\alpha$ with respect to $t\in [0,1]$.
	\end{corollary}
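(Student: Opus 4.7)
The plan is to reduce the corollary to Corollary \ref{cor-sup-X}, from which the H\"older regularity of a single $\widetilde{X}_j$ is almost immediate after a suitable choice of exponent; the product claim then follows from standard algebra of H\"older functions on a compact interval.

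Given $\alpha \in [0, 1/2)$ and an integer $j \ge 0$, I would first choose $q > 2$ large enough so that
\[
 0 \le \alpha < \tfrac{1}{2} - \tfrac{1}{q},
\]
which is possible since $\tfrac{1}{2} - \tfrac{1}{q} \uparrow \tfrac{1}{2}$ as $q \to \infty$. Corollary \ref{cor-sup-X} then yields a modification $\widetilde{X}_j$ of $X_j$ with
\[
 \mathbb{E}\Big[\Big(\sup_{t \ne s} \tfrac{|\widetilde{X}_j(t) - \widetilde{X}_j(s)|}{|t-s|^{\alpha}}\Big)^q\Big] < \infty,
\]
so in particular the H\"older seminorm of order $\alpha$ of $\widetilde{X}_j$ is almost surely finite, which is exactly the statement that $\widetilde{X}_j$ is $\alpha$-H\"older continuous on $[0,1]$.

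For the product, I would note that each $\widetilde{X}_j$, being continuous on the compact interval $[0,1]$, is almost surely bounded; write $\|\widetilde{X}_j\|_\infty = \sup_{t \in [0,1]} |\widetilde{X}_j(t)|$ and let $[\widetilde{X}_j]_\alpha$ denote its $\alpha$-H\"older seminorm. A standard telescoping identity
\[
 \prod_{j=0}^m \widetilde{X}_j(t) - \prod_{j=0}^m \widetilde{X}_j(s) = \sum_{j=0}^m \Big(\prod_{i<j} \widetilde{X}_i(t)\Big)\big(\widetilde{X}_j(t) - \widetilde{X}_j(s)\big)\Big(\prod_{i>j} \widetilde{X}_i(s)\Big)
\]
then gives the pointwise estimate
\[
 \Big|\prod_{j=0}^m \widetilde{X}_j(t) - \prod_{j=0}^m \widetilde{X}_j(s)\Big| \le \Big(\sum_{j=0}^m [\widetilde{X}_j]_\alpha \prod_{i \ne j} \|\widetilde{X}_i\|_\infty\Big) |t-s|^\alpha,
\]
where the bracketed constant is almost surely finite. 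Hence $\prod_{j=0}^m \widetilde{X}_j$ is H\"older continuous of order $\alpha$.

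There is essentially no obstacle here: the regularity of a single $\widetilde{X}_j$ is a direct consequence of the previously invoked Kolmogorov continuity argument, and the stability of H\"older regularity under finite products of bounded continuous functions is a routine fact. The only minor point to watch is making sure the $q$ one chooses is strictly larger than $(1/2-\alpha)^{-1}$, which is automatic.
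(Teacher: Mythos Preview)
Your proof is correct and follows exactly the paper's approach: the paper's proof is the one-liner ``It follows immediately from Corollary~\ref{cor-sup-X},'' and you have simply made explicit the choice of $q$ and the telescoping argument (which is the identity~\eqref{elem-id}) for the product. There is nothing to add.
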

	
	\begin{proof}
		It follows immediately from Corollary~\ref{cor-sup-X}. 
	\end{proof}

	\begin{convention}\label{conv-mod}
		By Corollary~\ref{cor-holder},  in what follows, given any $\varepsilon>0$, we shall always assume that the stochastic process $X_j(t)$ are H\"older continuous of order $1/2- \varepsilon$ and satisfies the inequality \eqref{Xj-q}.  
	\end{convention}

	\section{Initial steps in the proof of Theorem~\ref{Uniform-Bound}}

	\subsection{An elementary identity}
	The following elementary identity will play a key role  in several places of this paper: given any two finite sequences of complex numbers $(a_j)_{j=0}^m$ and $(b_j)_{j =0}^m$, we have 
	\begin{align}\label{elem-id}
		\prod_{j=0}^{m}a_j-\prod_{j=0}^{m}b_j=\sum_{r=0}^{m}\Big(\prod_{j=0}^{r-1}b_j\Big) \big(a_r-b_r\big) \Big(\prod_{j=r+1}^{m}a_j\Big). 
	\end{align}

	\subsection{A very rough estimate of $\widehat{\mu_{\gamma, m}}$}

	\begin{lemma}\label{lem-fm}
		For each fixed $\gamma\in(0,\sqrt{2})$ and any fixed $\tau\in(0,D_{\gamma})\subset(0,1)$, let $m\ge 0$ be an integer. For any $q > \frac{4}{1-\tau}$, 
		\[
		\E\Big[  \sum_{n=1}^{\infty} | n^{\tau/2} \widehat{\mu_{\gamma, m}} (n)|^q  \Big]<\infty.  
		\]
		In particular, for any $p\in (1,2)$ and $q>\frac{4}{1-\tau}$, we have 
		\[
		\E\Big[ \Big\{  \sum_{n=1}^{\infty} |n^{\tau/2} \widehat{\mu_{\gamma, m}} (n)|^q \Big\}^{p/q} \Big]<\infty.  
		\]
	\end{lemma}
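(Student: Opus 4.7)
The plan is to deduce the second assertion from the first by a Jensen-type argument, and to establish the first by exploiting the H\"older continuity of the density $F_m(t):=\prod_{j=0}^{m}X_j(t)$ of $\mu_{\gamma,m}$ guaranteed by Convention~\ref{conv-mod}.

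For the reduction, observe that if $1<p<2<q$, then $p/q<1$, so the concavity of $x\mapsto x^{p/q}$ and Jensen's inequality give
\[
\mathbb{E}\Big[\Big\{\sum_{n=1}^{\infty}|n^{\tau/2}\widehat{\mu_{\gamma,m}}(n)|^{q}\Big\}^{p/q}\Big]\le \Big(\mathbb{E}\Big[\sum_{n=1}^{\infty}|n^{\tau/2}\widehat{\mu_{\gamma,m}}(n)|^{q}\Big]\Big)^{p/q},
\]
so the second bound follows from the first.

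For the first assertion, the standard pointwise bound for Fourier coefficients of a H\"older function on $[0,1]$ yields, for any $\alpha\in(0,1)$,
\[
|\widehat{\mu_{\gamma,m}}(n)|=|\widehat{F_m}(n)|\le \frac{C\,[F_m]_\alpha}{n^{\alpha}}\quad\text{for all } n\ge 1,
\]
where $[F_m]_\alpha:=\sup_{t\ne s}|F_m(t)-F_m(s)|/|t-s|^\alpha$. This is proved by splitting $[0,1]$ into intervals of length $1/n$ and replacing $F_m$ by its value at the left endpoint of each interval, which makes the oscillatory integral vanish; the remainder is bounded by $[F_m]_\alpha\int_0^{1/n}r^\alpha\,dr$ on each interval, and summing gives the stated estimate. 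Consequently,
\[
\mathbb{E}\Big[\sum_{n=1}^\infty |n^{\tau/2}\widehat{\mu_{\gamma,m}}(n)|^q\Big]\le C^q\,\mathbb{E}\big[[F_m]_\alpha^{q}\big]\sum_{n=1}^\infty n^{q\tau/2-q\alpha}.
\]

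To make the Dirichlet series converge we require $\alpha>\tau/2+1/q$. The hypothesis $q>4/(1-\tau)$ gives $1/q<(1-\tau)/4$, so $\tau/2+1/q<(1+\tau)/4<1/2$; hence we may (and do) fix $\alpha=1/2-\varepsilon$ with $\varepsilon>0$ so small that $\alpha>\tau/2+1/q$, and Convention~\ref{conv-mod} allows us to use this $\alpha$ for every $X_j$. It therefore remains to prove $\mathbb{E}[[F_m]_\alpha^q]<\infty$. Applying the elementary identity \eqref{elem-id} with $a_j=X_j(t)$ and $b_j=X_j(s)$, dividing by $|t-s|^\alpha$ and taking the supremum over $t\ne s$, we obtain
\[
[F_m]_\alpha \le \sum_{r=0}^{m}\Big(\prod_{j<r}\|X_j\|_\infty\Big)[X_r]_\alpha\Big(\prod_{j>r}\|X_j\|_\infty\Big).
\]
Since the $X_j$ are independent and Corollary~\ref{cor-sup-X} supplies $\mathbb{E}[\,\|X_j\|_\infty^q\,]<\infty$ and $\mathbb{E}[[X_j]_\alpha^q]<\infty$ for our choice of $\alpha$ and $q$, H\"older's inequality in the $r$-sum together with the independence controls $\mathbb{E}[[F_m]_\alpha^q]$ by a finite quantity depending only on $m$, $\gamma$, $\alpha$ and $q$.

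The only subtle point is the choice of $\alpha$: one must verify that the window of admissible exponents $\alpha\in(\tau/2+1/q,\,1/2)$ is nonempty under the assumption $q>4/(1-\tau)$, which is precisely the computation above. All other steps are routine; the estimate is not uniform in $m$, which is all that the lemma asserts.
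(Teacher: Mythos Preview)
Your argument is correct and follows essentially the same route as the paper: bound the Fourier coefficients of the density $F_m=\prod_{j\le m}X_j$ via its H\"older seminorm, control $[F_m]_\alpha$ through the telescoping identity \eqref{elem-id} and the independence of the $X_j$, and choose $\alpha$ so that the resulting Dirichlet series converges. The only organizational difference is that the paper first proves $\mathbb{E}[|F_m(t)-F_m(s)|^q]\lesssim |t-s|^{q/2}$ and then applies Kolmogorov's continuity theorem to $F_m$ itself, whereas you bound $[F_m]_\alpha$ pointwise by $\sum_r\big(\prod_{j<r}\|X_j\|_\infty\big)[X_r]_\alpha\big(\prod_{j>r}\|X_j\|_\infty\big)$ and invoke Corollary~\ref{cor-sup-X} termwise; both lead to the same constraint on $\alpha$.

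One small imprecision: the window you must check is $(\tau/2+1/q,\,1/2-1/q)$, not $(\tau/2+1/q,\,1/2)$, because Corollary~\ref{cor-sup-X} only furnishes $\mathbb{E}[[X_j]_\alpha^{\,q}]<\infty$ for $\alpha<\tfrac12-\tfrac1q$. Your own computation already handles this, since $q>4/(1-\tau)$ gives $2/q<(1-\tau)/2$ and hence $\tau/2+1/q<1/2-1/q$; just state the upper endpoint correctly.
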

	
	\begin{proof}
		We shall use Convention~\ref{conv-mod}.  
		In view of the defining formula \eqref{def-mu-gm} for the random measure $\mu_{\gamma,m}$, we define   
		\[
		\rho_m(t) = \prod_{j=0}^m X_j(t), \quad t\in [0,1]. 
		\]
		The elementary identity \eqref{elem-id} implies that 
		\[
		|\rho_m(s) - \rho_m(t) |\le   \sum_{r=0}^m  \Big[\prod_{j=0}^{r-1}X_j(t)\Big] \big|X_r(s)-X_r(t)\big| \Big[\prod_{j=r+1}^{m}X_j(s)\Big].
		\]
		Then by defining the independent random variables
		\[
		K_j = \sup_{t\in [0,1]} X_j(t), 
		\]
		we obtain,  for any  $q>2$,  
		\[
		\E[|\rho_m(s) - \rho_m(t)|^q] \le (m+1)^q     \sum_{r=0}^m  \Big(\prod_{j=0}^{r-1} \E[K_j^q]\Big) \E[|X_r(s)-X_r(t)|^q] \Big(\prod_{j=r+1}^{m}\E[K_j^q]\Big).
		\]
		Therefore,  by Lemma~\ref{Holder-p-moment-X_m} and Corollary~\ref{cor-sup-X},  there exists a constant $C(m, q)>0$ such that 
		\[
		\E[|\rho_m(s) - \rho_m(t)|^q] \le   C(m,q)  \cdot  |s-t|^{q/2}.
		\]
		Then, again by the standard Kolmogorov's continuity theorem (see, e.g., \cite[Chapter~I, Theorem~2.1]{RY99}), there exists a modification $\widetilde{\rho}_m$
		of $\rho_m$  such that 
		\begin{align}\label{mod-rho}
			\E \Big[  \Big(\sup_{s\ne t} \frac{|\widetilde{\rho}_m(s) - \widetilde{\rho}_m(t)|}{|s-t|^\alpha}\Big)^q\Big]<\infty
		\end{align}
		for any $0\le \alpha < \frac{1}{2} - \frac{1}{q}$.  Indeed, since both $\rho_m$ and $\widetilde{\rho}_m$ are continuous, they are indistinguishable and hence the inequality \eqref{mod-rho} holds for $\rho_m$ itself.

		Recall that, by definition, the modulus of continuity of $\rho_m$ is given by 
		\[
		\omega(\rho_m, \delta) = \sup_{t,t+\delta\in [0,1]} | \rho_m(t+\delta) - \rho_m(t) |.  
		\]
		For $q>2$, define a random variable 
		\[
		\Lambda_\alpha   = \sup_{ 0<\delta<1}   \frac{\omega(\rho_m, \delta)}{\delta^\alpha}\quad\text{with $0\le \alpha < \frac{1}{2} - \frac{1}{q}$} .
		\]
		The inequality \eqref{mod-rho} for the function $\rho_m$ implies  
		\[
		\E [\Lambda_\alpha^q]<\infty  \quad \text{for all $0\le \alpha < \frac{1}{2} - \frac{1}{q}$}.
		\]
		
		Finally,  for any  $n\ge 1$, 
		\[
		\widehat{\mu_{\gamma, m}}(n)= \widehat{\rho_m}(n). 
		\]
		Therefore, by the standard fact in harmonic analysis  (see, e.g., \cite[Chapter~I, Section~4.6]{Kat04}), we know that 
		\[
		|\widehat{\rho_m}(n)| \le \frac{1}{2}  \omega\Big(\rho_m, \frac{\pi}{n}\Big) \le  \frac{\pi^\alpha}{2 n^\alpha} \Lambda_\alpha. 
		\]
		Then, for any  $q>\frac{4}{1-\tau}$,  there exists $\varepsilon>0$ small enough such that  by taking 
		\[
		0<\alpha = \frac{1}{2}- \frac{1}{q} - \varepsilon< \frac{1}{2}- \frac{1}{q}, 
		\]
		we have 
		\[
		(\alpha- \frac{\tau}{2})q = 1+ \frac{1-\tau}{2} \Big(q - \frac{4}{1-\tau}\Big) - q\varepsilon>1
		\]
		and hence 
		\[
		\E\Big[  \sum_{n=1}^{\infty}| n^{\tau/2} \widehat{\mu_{\gamma, m}} (n)|^q  \Big] \leq C \E[\Lambda_\alpha^q]   \sum_{n=1}^{\infty}  \frac{1}{n^{(\alpha- \tau/2)q}} <\infty.
		\]
		This completes the proof of the lemma. 
	\end{proof}

	\subsection{Existence of suitable exponents for $L^p(\ell^q)$}
	
	\begin{lemma}\label{existence-p0q0}
		For each fixed $\gamma\in(0,\sqrt{2})$ and any fixed $\tau\in(0,D_{\gamma})\subset(0,1)$, there exist $p=p(\gamma,\tau)$ and $q=q(\gamma,\tau)$ satisfying $1<p<2<\frac{4}{1-\tau}<q<\infty$ such that
		\[
		(p-1)\Big(1-\frac{\gamma^2p}{2}\Big)-\frac{\tau p}{2}-\frac{p}{q}>0.
		\]
	\end{lemma}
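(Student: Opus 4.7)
Set
\[
g(p) := (p-1)\Big(1 - \frac{\gamma^2 p}{2}\Big) - \frac{\tau p}{2},
\]
so that the inequality in the statement is exactly $g(p) - p/q > 0$. Since $g$ is continuous on $[1,2]$ and $p/q \to 0$ as $q \to \infty$, my plan is to exhibit a single $p \in (1,2)$ with $g(p) > 0$; the final value of $q$ is then obtained by choosing it larger than both $4/(1-\tau)$ and $p/g(p)$, which automatically yields the displayed inequality. Hence everything reduces to producing such a $p$, and the split of $D_\gamma$ into two regimes in \eqref{D-gamma} is precisely what dictates how this exponent should be selected.

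For the small-parameter regime $0 < \gamma \le \sqrt{2}/2$, where $D_\gamma = 1 - \gamma^2$, I would evaluate at the right endpoint first: $g(2) = 1 - \gamma^2 - \tau = D_\gamma - \tau > 0$. Since $g$ is continuous, the inequality $g(p) > 0$ persists for some $p$ slightly below $2$ and therefore lies inside $(1,2)$ as required.

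The main algebraic step is the large-parameter regime $\sqrt{2}/2 < \gamma < \sqrt{2}$, where $D_\gamma = (\sqrt{2}-\gamma)^2$. In this range one can check that $1 - \gamma^2 < D_\gamma$, so $g(2) = 1 - \gamma^2 - \tau$ may be negative and the endpoint no longer suffices; $g$ must be optimised internally. The cleanest choice is
\[
p = \frac{\sqrt{2}}{\gamma} \in (1, 2),
\]
motivated by asking the two factors $p-1$ and $1 - \gamma^2 p/2$ to recombine into the two factors of $(\sqrt{2}-\gamma)^2$. A direct substitution gives
\[
(p-1)\Big(1 - \frac{\gamma^2 p}{2}\Big) = \frac{\sqrt{2}-\gamma}{\gamma} \cdot \frac{\sqrt{2}-\gamma}{\sqrt{2}} = \frac{D_\gamma}{\sqrt{2}\,\gamma}, \qquad \frac{\tau p}{2} = \frac{\tau}{\sqrt{2}\,\gamma},
\]
so $g(\sqrt{2}/\gamma) = (D_\gamma - \tau)/(\sqrt{2}\,\gamma) > 0$ by the hypothesis $\tau < D_\gamma$.

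The only non-routine point in this plan is guessing the exponent $p = \sqrt{2}/\gamma$ in the supercritical regime; it can alternatively be discovered systematically by differentiating $g$, locating its interior critical point $p^* = (2 + \gamma^2 - \tau)/(2\gamma^2)$, evaluating $g(p^*)$, and observing that the resulting positivity condition collapses to $\tau < (\sqrt{2}-\gamma)^2$. The value $\sqrt{2}/\gamma$ serves as a convenient surrogate for $p^*$ that makes the algebra transparent. Once $p$ is secured in either regime, absorbing $p/q$ by enlarging $q$ is immediate and completes the proof.
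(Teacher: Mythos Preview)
Your proposal is correct and follows essentially the same route as the paper: both arguments reduce to finding $p\in(1,2)$ with $(p-1)(1-\gamma^2 p/2)-\tau p/2>0$, evaluate at $p=2$ in the small-$\gamma$ regime and at $p=\sqrt{2}/\gamma$ in the large-$\gamma$ regime (with a continuity nudge to land strictly inside $(1,2)$), and then absorb $p/q$ by taking $q$ large. The paper rewrites your $g(p)$ as $\frac{p}{2}[f_\gamma(p)-\tau]$ with $f_\gamma(p)=-(\gamma^2 p+2/p)+2+\gamma^2$ and analyses the monotonicity of $f_\gamma$ to identify $\sup_{(1,2]}f_\gamma=D_\gamma$, but this is just a repackaging of the same computation you carried out directly.
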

	
	\begin{proof}
		Consider the function
		\[
		f_\gamma(p) =-\Big(\gamma^2p+\frac{2}{p}\Big)+2+\gamma^2,\quad p\in(1,2].
		\]
		If $0< \gamma<\sqrt{2}/2$, then $f_\gamma$ is increasing in $(1,2]$; while for $\sqrt{2}/2 \le \gamma <\sqrt{2}$, the function $f_\gamma$ is increasing in $(1, \sqrt{2}/\gamma]$ and decreasing in $(\sqrt{2}/\gamma, 2]$. Consequently, 
		\[
		\sup_{p\in(1,2]}f_\gamma(p)
		=\left\{\begin{array}{cl}
			f_\gamma(2) = 1-\gamma^2 & \text{if $0<\gamma<\sqrt{2}/2$}
			\vspace{2mm}
			\\
			f_\gamma(\sqrt{2}/\gamma) = (\sqrt{2}-\gamma)^2 & \text{if  $\sqrt{2}/2\leq\gamma<\sqrt{2}$}
		\end{array}\right..
		\]
		In other words, by \eqref{D-gamma},
		\[
		\sup_{p\in(1,2]}f_\gamma(p) = D_\gamma. 
		\]
	Hence,  for any  $\tau\in(0,D_{\gamma})$, there exists $p_0=p_0(\gamma,\tau)\in(1,2)$ such that $f_\gamma(p_0)>\tau$. It follows that 
		\[
		\frac{p_0}{2}[f_{\gamma}(p_0)-\tau]=(p_0-1)\Big(1-\frac{\gamma^2p_0}{2}\Big)-\frac{\tau p_0}{2}>0.
		\]
		Then for large enough $q_0=q_0(\gamma,\tau)>\frac{4}{1-\tau}>2$,  we have 
		\[
		(p_0-1)\Big(1-\frac{\gamma^2p_0}{2}\Big)-\frac{\tau p_0}{2}-\frac{p_0}{q_0}>0.
		\]
		This completes the proof of the lemma.
	\end{proof}

\section{The main part of the Proof of Theorem~\ref{Uniform-Bound}}

This section is devoted to the proof of  Theorem~\ref{Uniform-Bound}.

{\flushleft \bf Convention of notation:} 
For simplifying notation, in what follows, we will always fix $\gamma\in(0,\sqrt{2})$ and $\tau\in(0,D_{\gamma})$, and fix $p\in(1,2)$, $q\in(\frac{4}{1-\tau},\infty)$ satisfying
\begin{align}\label{p-q-tau}
	\Theta(\gamma, \tau, p, q) := (p-1)\Big(1-\frac{\gamma^2p}{2}\Big)-\frac{\tau p}{2}-\frac{p}{q}>0.
\end{align}
The existence of such a pair $(p,q)$ is guaranteed by Lemma~\ref{existence-p0q0}.   

Then, for instance, in defining the random vector $Y_I$ in the formula \eqref{def-Y} below, instead of writing $Y_I^{(\gamma, \tau)}$, we only write $Y_I$.   Similarly, by writing $A\lesssim  B$, we mean that there exists a finite constant $C>0$ which only depends on the parameters  $\gamma$, $\tau$, $p$, $q$ such that $A \le C B$.

\subsection{The dyadic decomposition and martingale type $p$ inequalities for $\ell^q$}
For each integer $m \ge 0$, let $\mathscr{D}_m$ denote the family of  dyadic sub-intervals of $[0,1)$ of level/generation $m$: 
\begin{align}\label{def-dya}
	\mathscr{D}_m:= \Big\{I\subset [0,1) : I = \Big[\frac{h-1}{2^m}, \frac{h}{2^m}\Big) \, \text{for some  integer $h= 1, \cdots, 2^m$} \Big\}. 
\end{align}
Recall the definition \eqref{def-Xm} of the stochastic processes $\{X_j(t): t\in [0,1]\}_{j\ge 0}$ and  define 
\begin{align}\label{def-Fm}
	\mathscr{F}_m:=\sigma\big(X_j:0\leq j\leq m\big),\quad m\geq 0.
\end{align}
Recall also the definition \eqref{vector-valued-martingale} of the $\ell^q$-valued martingale $(M_m)_{m\ge 0}$ with respect to the natural increasing filtration $(\mathscr{F}_m)_{m\ge 0}$ (see Lemma~\ref{lem-fm} for its $L^p(\ell^q)$-integrability): 
\[
M_m=(n^{\frac{\tau}{2}}\widehat{\mu_{\gamma,m}}(n))_{n\geq1}.
\]

Now for each $k\ge 2$, and for any  dyadic interval $I\in \mathscr{D}_{k-1}$,  we define an $\mathscr{F}_k$-measurable  random vector $Y_I =Y_I^{(\gamma, \tau)}:= (Y_I(n))_{n\ge 1}$ by
\begin{align}\label{def-Y}
	Y_I(n):= n^{\frac{\tau}{2}}\int_{I}\Big[\prod_{j=0}^{k-1}X_j(t)\Big]  \mathring{X}_{k} (t) e^{-2\pi int}\mathrm{d}t,
	\end{align}
	where 
	\[
	\mathring{X}_{k}(t):= X_{k}(t) - \E[X_{k}(t)] = X_{k}(t)-1. 
\]

{\flushleft\bf Alarming:}  One should note that for each $I\in \mathscr{D}_{k-1}$, the random vector $Y_I$  defined in \eqref{def-Y} is $\mathscr{F}_k$-measurable (but is not $\mathscr{F}_{k-1}$-measurable). It is worthwhile to note that, by the item (P2) of Elementary Properties~\ref{elem-prop}, we have 
\begin{align}\label{Y-I-cond}
	\E[Y_I|\mathscr{F}_{k-1}] =0. 
\end{align}

\begin{proposition}\label{dec-vm-bis}
	There exists a constant $C=C(\gamma,\tau,p,q)>0$ such that for any $m\geq 2$,
	\[
	\mathbb{E}[\|M_m\|_{\ell^q}^p]\leq C \E[\| M_1\|_{\ell^q}^p]  +  C \sum_{k=2}^{m}\sum_{I \in \mathscr{D}_{k-1}}\mathbb{E}[\|Y_I \|_{\ell^q}^p].
	\]
\end{proposition}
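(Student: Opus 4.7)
The proposition should follow from \emph{two} applications of the martingale type $p$ inequality \eqref{def-Mtype} for the Banach space $\ell^q$: the first to the whole vector-valued martingale $(M_m)_{m\ge 0}$, and the second, after exhibiting each increment $M_k-M_{k-1}$ as a conditionally centered sum of the localized vectors $Y_I$, to each of its odd- and even-indexed sub-sums, which are conditionally independent.

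The first step is to write $M_m = M_1 + \sum_{k=2}^{m}(M_k-M_{k-1})$, use $\|a+b\|^p\le 2^{p-1}(\|a\|^p+\|b\|^p)$, and apply \eqref{def-Mtype} to the reindexed $\ell^q$-valued martingale $(M_{k+1}-M_1)_{k\ge 0}$ (which starts at $0$, so its first nontrivial martingale difference is $M_2-M_1$). This yields
\[
\E[\|M_m\|_{\ell^q}^p] \lesssim \E[\|M_1\|_{\ell^q}^p] + \sum_{k=2}^{m}\E[\|M_k-M_{k-1}\|_{\ell^q}^p].
\]
Next, using the elementary identity $\prod_{j=0}^{k}X_j(t)-\prod_{j=0}^{k-1}X_j(t)=\bigl(\prod_{j=0}^{k-1}X_j(t)\bigr)\mathring{X}_k(t)$ and partitioning $[0,1)=\bigsqcup_{I\in\mathscr{D}_{k-1}}I$, a direct computation on each coordinate $n\ge 1$ shows
\[
M_k-M_{k-1}=\sum_{I\in\mathscr{D}_{k-1}}Y_I,
\]
and each $Y_I$ is $\mathscr{F}_{k-1}$-conditionally centered as in \eqref{Y-I-cond}, thanks to (P2) of Elementary Properties \ref{elem-prop} together with $\E[\mathring{X}_k(t)]=0$.

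The crux—where the main care is needed—is that the family $\{Y_I:I\in\mathscr{D}_{k-1}\}$ is \emph{not} conditionally independent given $\mathscr{F}_{k-1}$: two adjacent dyadic intervals in $\mathscr{D}_{k-1}$ have $\dist(I,I')=0$, which fails the independence threshold $2^{-(k-1)}$ required by (P3). I would resolve this with the odd-even decomposition
\[
\mathscr{D}_{k-1}=\mathscr{D}_{k-1}^{\odd}\sqcup\mathscr{D}_{k-1}^{\even},
\]
where the two parity classes collect $I=[(h-1)2^{-(k-1)},h\cdot 2^{-(k-1)})$ with $h$ odd, respectively even. Any two distinct members of the same parity class are separated by at least $2^{-(k-1)}$; hence (P3) applied to $\varphi_k$ (and consequently to $X_k$) implies that $\{Y_I:I\in\mathscr{D}_{k-1}^{\odd}\}$ is $\mathscr{F}_{k-1}$-conditionally independent and centered, and the analogous statement holds for $\mathscr{D}_{k-1}^{\even}$.

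The second application of type $p$ then proceeds by invoking the independent-sum inequality \eqref{def-ind-Mtype} \emph{conditionally} on $\mathscr{F}_{k-1}$ within each parity class and taking total expectation, which gives
\[
\E\Bigl[\Bigl\|\sum_{I\in\mathscr{D}_{k-1}^{\bullet}}Y_I\Bigr\|_{\ell^q}^p\Bigr]\lesssim\sum_{I\in\mathscr{D}_{k-1}^{\bullet}}\E[\|Y_I\|_{\ell^q}^p],\qquad \bullet\in\{\odd,\even\}.
\]
Combining the two parity classes through $\|a+b\|^p\le 2^{p-1}(\|a\|^p+\|b\|^p)$ yields $\E[\|M_k-M_{k-1}\|_{\ell^q}^p]\lesssim\sum_{I\in\mathscr{D}_{k-1}}\E[\|Y_I\|_{\ell^q}^p]$, and substituting this into the first-step estimate gives the proposition. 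The only nontrivial point in this plan is the odd-even decomposition together with the careful verification that (P3) actually yields \emph{conditional} independence given $\mathscr{F}_{k-1}$ within each parity class; once that is in hand, everything else is a direct double invocation of Pisier's type-$p$ inequality for $\ell^q$.
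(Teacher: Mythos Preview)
Your proposal is correct and follows essentially the same approach as the paper: apply the martingale type $p$ inequality once to the filtration $(\mathscr{F}_m)_{m\ge 0}$, decompose each increment $M_k-M_{k-1}$ as $\sum_{I\in\mathscr{D}_{k-1}}Y_I$, use the odd--even splitting of $\mathscr{D}_{k-1}$ so that (P3) yields $\mathscr{F}_{k-1}$-conditional independence within each parity class, and then apply \eqref{def-ind-Mtype} conditionally before taking full expectation. Your explicit remark that adjacent intervals in $\mathscr{D}_{k-1}$ fail the distance threshold (motivating the parity split) is a nice clarification, but the argument is otherwise identical to the paper's.
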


\begin{remark*}
	We already know from Lemma~\ref{lem-fm} that $\E[\|M_1\|_{\ell^q}^p]<\infty$.  
\end{remark*}

We postpone the detailed proof of Proposition~\ref{dec-vm-bis}  in \S~\ref{sec-2-Mtype}, but here we explain its main ingredients.

The  proof of   Proposition~\ref{dec-vm-bis}   relies crucially on  twice applications of  the martingale type $p$ inequalities for the Banach space $\ell^q$ with $1<p\leq2\leq q<\infty$ and is outlined as follows: 
\begin{itemize}
	\item Firstly, we use the martingale type $p$ inequality \eqref{def-Mtype} and obtain 
	\[
	\mathbb{E}[\|M_m\|_{\ell^q}^p]\lesssim  \E[\|M_1\|_{\ell^q}^p] + \sum_{k=2}^{m}\mathbb{E}[\|M_{k}-M_{k-1}\|_{\ell^q}^p].
	\]
	\item Then, fix an integer $k$ with $2\le k\le m$.  We shall use the dyadic decomposition  of the martingale difference $M_k-M_{k-1}$ into the summation of the random vectors $Y_I$ introduced in \eqref{def-Y}: 
	\[
		M_k-M_{k-1} =  \sum_{I\in \mathscr{D}_{k-1}} Y_I. 
	\]
	For using again the martingale type $p$ inequality, we need to consider the {\it odd-even decomposition} of 
\[
	\mathscr{D}_{k-1}= \mathscr{D}_{k-1}^{\odd} \sqcup \mathscr{D}_{k-1}^\even
\]
	 with  $\mathscr{D}_{k-1}^{\mathrm{odd}}$ and $\mathscr{D}_{k-1}^{\mathrm{even}}$ being the sub-families of $\mathscr{D}_{k-1}$  defined in \eqref{def-even-odd} below.  Then we have 
\[
	M_k-M_{k-1} =  \sum_{I\in \mathscr{D}_{k-1}^\odd} Y_I +   \sum_{I\in \mathscr{D}_{k-1}^\even} Y_I. 
\]
	Now a crucial observation is that, by the item (P3) of the Elementary Properties~\ref{elem-prop}, conditioned on $\mathscr{F}_{k-1}$,   the random vectors $(Y_I)_{I\in \mathscr{D}_{k-1}^\odd}$ are independent (and also conditionally centered by \eqref{Y-I-cond}) and hence with respect to the conditional expectation 
	$\E_{k-1}[\cdot] = \E[\cdot |\mathscr{F}_{k-1}]$, we may apply the martingale type $p$ inequality \eqref{def-ind-Mtype} to obtain 
	\[
	\E_{k-1}\Big[\Big\| \sum_{I\in \mathscr{D}_{k-1}^\odd} Y_I\Big\|_{\ell^q}^p \Big] \lesssim     \sum_{I\in \mathscr{D}_{k-1}^\odd} \E_{k-1}[\| Y_I\|_{\ell^q}^p ]. 
	\]
A similar inequality holds for the summand contributed by $I\in \mathscr{D}_{k-1}^\even$. 
\end{itemize}

\subsection{The localized estimate via separation-of-variable estimate}\label{sec-sep}
The next goal is to estimate $\E[\|Y_I\|_{\ell^q}^p]$.   

Recall the definition \eqref{p-q-tau} for the quantity $\Theta(\gamma, \tau, p,q)$:
\[
\Theta(\gamma, \tau, p,q)  = (p-1)\Big(1-\frac{\gamma^2p}{2}\Big)-\frac{\tau p}{2}-\frac{p}{q}>0. 
\]
 We have the following estimate.
\begin{proposition}\label{UB-ZW}
	There exists a constant $C = C(\gamma, \tau, p, q)>0$ such that for any dyadic sub-interval $I\subset [0,1)$ of generation $k-1$ with $k\geq2$,  
	\[
	\E[\|Y_I\|_{\ell^q}^p] \le C |I|^{ 1 + \Theta(\gamma, \tau, p,q)}. 
	\]
In other words,   for each $k\ge 2$ and any $I\in \mathscr{D}_{k-1}$, 
\[
\E[\|Y_I\|_{\ell^q}^p] \lesssim  2^{-k \cdot \theta(\gamma, \tau, p, q)}
\]
with 
	\[
	\theta(\gamma, \tau, p, q) = 1+ \Theta(\gamma, \tau, p, q)  =  p-\frac{p(p-1)\gamma^2}{2}-\frac{\tau p}{2}-\frac{p}{q}. 
	\]
\end{proposition}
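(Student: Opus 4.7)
The plan is to bound $\E[\|Y_I\|_{\ell^q}^p]$ by first splitting the frequency axis into dyadic windows matched to the natural scale of the weights and then producing a pointwise separation-of-variables estimate of the form \eqref{sep-es}. Set $D_k(t) := \bigl[\prod_{j=0}^{k-1} X_j(t)\bigr](X_k(t)-1)$, define the frequency windows $W_0 = [1, 2^k] \cap \N$ and $W_L = (2^{L+k-1}, 2^{L+k}] \cap \N$ for $L \ge 1$, and for each such $L$ partition $I$ into $2^L$ sub-intervals $J \in \mathscr{D}_{L+k-1}(I)$ of length $2^{-(L+k-1)}$. For $n \in W_L$, I would approximate $D_k$ on $I$ by its dyadic discretization at scale $2^{-(L+k-1)}$ as in \eqref{dec-dya-err}, splitting $\int_I D_k(t) e^{-2\pi i n t}\,\mathrm{d}t$ into a dyadic-approximation part and a local-error part; the low-frequency block $n \in W_0$ will be handled directly.

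For the dyadic-approximation part $\sum_J D_k(\ell_J) \int_J e^{-2\pi i n t}\,\mathrm{d}t$, the bare oscillatory integral only yields a $1/n$-gain per summand, which is insufficient once multiplied by $n^{\tau/2}$ and summed in $\ell^q$ over the $|W_L| \sim 2^{L+k}$ frequencies. The key step is Abel summation: writing $\int_J e^{-2\pi i n t}\,\mathrm{d}t = (-2\pi i n)^{-1}\bigl[e^{-2\pi i n r_J} - e^{-2\pi i n \ell_J}\bigr]$ and summing by parts rewrites the main term as a boundary contribution plus a weighted sum of the telescoping increments $D_k(\ell_J) - D_k(r_J)$, exposing a H\"older cancellation of order $2^{-L/2}$. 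The local-error part $\sum_J [D_k(t) - D_k(\ell_J)] \mathds{1}_J(t)$ needs no Abel step, only a pointwise H\"older estimate on each $J$.

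Both kinds of increment are then handled via the elementary product identity \eqref{elem-id} applied to $D_k$: each becomes a telescoping sum over $r \in \{0, \ldots, k\}$ whose $r$-th term has one factor of the form $X_r(\cdot) - X_r(\cdot)$ evaluated at points at distance $\le 2^{-(L+k-1)} \le 2^{-r}$, with the remaining factors being products of $X_j$ at fixed points. Independence of the $X_j$ across $j$ (item (P2) of Elementary Properties~\ref{elem-prop}) lets the $L^p$-norm of each term factor; Lemma~\ref{Holder-p-moment-X_m} bounds the difference factor by $\sqrt{2^r \cdot 2^{-(L+k-1)}} \lesssim 2^{-L/2}$ uniformly in $r$, and Lemma~\ref{p-moment-X_m} controls the remaining product by $2^{k(p-1)\gamma^2/2}$. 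This produces the random coefficients $R_L, Q_L$ of \eqref{sep-es} with $L^p$-moments of order $2^{k(p-1)\gamma^2/2} \cdot 2^{-L/2}$ (times benign $J$-counting factors), and explicit deterministic weights $v_L, w_L$ that truncate $n^{\tau/2}$, with an extra $1/n$-factor in $v_L$ coming from the Abel step.

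Since the $v_L, w_L$ have disjoint supports in $n$, combining the triangle inequality on each $W_L$-block with $(\sum_L a_L^q)^{p/q} \le \sum_L a_L^p$ (valid because $p/q < 1$) and $(a+b)^p \le 2^p(a^p + b^p)$ gives $\E[\|Y_I\|_{\ell^q}^p] \lesssim \sum_{L \ge 0} \|v_L\|_{\ell^q}^p \E[R_L^p] + \sum_{L \ge 1} \|w_L\|_{\ell^q}^p \E[Q_L^p]$. A direct bookkeeping of exponents collapses the $k$-dependence in each summand precisely to $2^{-k \cdot \theta(\gamma, \tau, p, q)}$, while the $L$-series is geometric with ratio $2^{p(1/q + \tau/2 - 1/2)}$ and sums to a constant since $q > 4/(1-\tau)$ forces $1/q + \tau/2 - 1/2 < 0$. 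The hard part will be the Abel-summation step: extracting the $2^{-L/2}$ cancellation from the dyadic-approximation main term is exactly what makes every geometric series in $L$ converge and matches the $\sqrt{2^k \cdot |J|}$ scale of Lemma~\ref{Holder-p-moment-X_m}; without it, the product $n^{\tau/2}$ against the $\ell^q$-sum over $|W_L| \sim 2^{L+k}$ frequencies would swamp the bare $1/n$-gain of the oscillatory integral. The polynomial-in-$k$ losses from the $(k+1)$-term elementary-identity expansion are absorbed by the strict inequality $\tau < D_\gamma$.
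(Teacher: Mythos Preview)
Your proposal is correct and follows essentially the same route as the paper's proof: the same dyadic frequency windows, the same discretization of $D_k$ at scale $2^{-(L+k-1)}$, Abel summation on the main term to expose the telescoping increments, the product identity \eqref{elem-id} for the H\"older control, and the same disjoint-support bookkeeping to pass to $\E[\|Y_I\|_{\ell^q}^p]$.

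One small correction: the ``polynomial-in-$k$ loss'' you anticipate from the $(k+1)$-term expansion does not occur. Rather than bounding $\sqrt{2^r|t-s|}\lesssim 2^{-L/2}$ uniformly in $r$ and then summing $k+1$ copies, keep the $r$-dependence and observe that $\sum_{r=0}^{k}\sqrt{2^{r}|t-s|}$ is geometric in $r$, so it already sums to $\lesssim 2^{k/2}\sqrt{|t-s|}\lesssim 2^{-L/2}$ with no extra $k$-factor. This is exactly how the paper gets the constant $C$ genuinely independent of $k$, so no appeal to the slack $\tau<D_\gamma$ is needed at this stage. (Also, your labeling of $v_L$ and $w_L$ is swapped relative to the paper's convention: there the extra $1/n$ from the Abel step sits in $w_L$, while $v_L$ carries the error term.)
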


\begin{remark*}
	Clearly, it is of crucial importance that, in Proposition~\ref{UB-ZW}, the constant $C$ is uniform for all dyadic sub-intervals $I\subset [0,1)$. 
\end{remark*}

The proof of Proposition~\ref{UB-ZW}  is much more involved and is postponed to \S~\ref{sec-local}.  

The main steps in the proof of Proposition~\ref{UB-ZW}   are outlined as follows:   take  any $k\ge 2$ and any dyadic interval $I\in \mathscr{D}_{k-1}$, recall that 
\[
\|Y_I\|_{\ell^q}^p = \Big\{\sum_{n=1}^\infty   |Y_I(n)|^q\Big\}^{p/q}. 
\]
The key in obtaining the desired upper estimate of $\E[\|Y_I\|_{\ell^q}^p]$ is to establish  the following {\it separation-of-variable} pointwise upper estimate for $ |Y_I(n)|$ (which is inspired by the standard Littlewood-Paley decomposition in harmonic analysis): 
\[
|Y_I(n)| \le    v_0(n) R_0 +  \sum_{L=1}^\infty v_L(n)  R_L + \sum_{L=1}^\infty w_L(n)  Q_L \quad \text{for all integers $n\ge 1$,}
\]
where $R_L$, $Q_L$ are non-negative random variables and  $v_L$, $w_L$ are deterministic (without randomness)  sequences of non-negative numbers with supports 
\[
\supp(v_0)  = [1, 2^k]\cap \N \an \supp(v_L)  = \supp(w_L)   =  (2^{k+L-1}, 2^{k+L}] \cap\N  \quad  \text{for all $L\ge 1$}. 
\]

In particular, for any $L\ge 1$, the constructions of $v_L$, $w_L$ and $R_L$, $Q_L$  rely on a dyadic-discrete-time approximation of the stochastic process  
\[
\Big[\prod_{j=0}^{k-1} X_j(t)\Big]  \mathring{X}_k(t), \quad t\in I. 
\]
The  level of the discrete-time approximation being  dependent on each dyadic interval for the integer numbers $2^{L+k-1}<n\le 2^{L+k}$.  It should also be mentioned that, such approximation is reasonable (meaning that the difference can be controlled) by Lemma~\ref{Holder-p-moment-X_m}.

\subsection{Derivation of Theorem ~\ref{Uniform-Bound} from  Proposition~\ref{dec-vm-bis}  and  Proposition~\ref{UB-ZW}}

We know from Lemma~\ref{lem-fm} that $\E[\|M_m\|_{\ell^q}^p]<\infty$ for all fixed $m\ge0$. Therefore, by Proposition~\ref{dec-vm-bis}, to prove  Theorem~\ref{Uniform-Bound}, it suffices to prove the inequality 
\begin{align}\label{summable-YI}
	\sum_{k=2}^{\infty}\sum_{I \in \mathscr{D}_{k-1}}\mathbb{E}[\|Y_I \|_{\ell^q}^p]<\infty.
\end{align}
This inequality follows from Proposition~\ref{UB-ZW}. Indeed, since $\# \mathscr{D}_{k-1} = 2^{k-1}$ for any $k\ge 2$,  by Proposition~\ref{UB-ZW}, we have 
\[
\sum_{k=2}^{\infty}\sum_{I \in \mathscr{D}_{k-1}}\mathbb{E}[\|Y_I \|_{\ell^q}^p] \lesssim \sum_{k=2}^{\infty}  2^k\cdot   2^{-k  [1+\Theta(\gamma, \tau, p, q)]}  = \sum_{k=2}^\infty 2^{-k\cdot \Theta(\gamma, \tau, p, q)}. 
\]
Thus,  by \eqref{p-q-tau}, we have $\Theta(\gamma, \tau, p, q)>0$ and   the above geometric series is convergent. Hence  we obtain the desired inequality \eqref{summable-YI}.

\subsection{Proof of Proposition~\ref{dec-vm-bis}}\label{sec-2-Mtype}
Consider the   $\ell^q$-valued martingale $(M_m)_{m\geq 0}$ defined in \eqref{vector-valued-martingale} with respect to the increasing filtration \eqref{def-Fm}.  For any $m\geq 2$, write $M_m$ as the sum of martingale differences: 
\[
M_m =M_1 + \sum_{k=2}^{m}(M_{k}-M_{k-1}).
\]
By the martingale type $p$ inequality \eqref{def-Mtype} for the Banach space $\ell^q$, we have
\begin{align}\label{first-martingale-type-inequality}
	\mathbb{E}[\|M_m\|_{\ell^q}^p]\lesssim  \E[\|M_1\|_{\ell^q}^p] + \sum_{k=2}^{m}\mathbb{E}[\|M_{k}-M_{k-1}\|_{\ell^q}^p].
\end{align}

For each $k\ge 2$ and $n\geq1$,  by the definition  \eqref{vector-valued-martingale}, we have  
\[
M_k(n)=n^{\frac{\tau}{2}}\widehat{\mu_{\gamma,k}}(n)=n^{\frac{\tau}{2}}\int_0^1e^{-2\pi int}\mu_{\gamma,k}(\mathrm{d}t)=n^{\frac{\tau}{2}}\int_0^1\Big[\prod_{j=0}^{k}X_j(t)\Big]e^{-2\pi int}\mathrm{d}t.
\]
Hence we obtain
\[
M_{k}(n)-M_{k-1}(n)=n^{\frac{\tau}{2}}\int_0^1\Big[\prod_{j=0}^{k-1}X_j(t)\Big]\mathring{X}_k(t)e^{-2\pi int}\mathrm{d}t,
\]
where
\[
	\mathring{X}_k(t)=X_k(t)-1.
\]
Recalling the notation $\mathscr{D}_{k-1}$ introduced in \eqref{def-dya} for the family of dyadic intervals and the definition \eqref{def-Y} for the random vector $Y_I$,  we obtain 
\[
M_{k}(n)-M_{k-1}(n)=   \sum_{I\in \mathscr{D}_{k-1}} Y_I(n) \quad \text{for all $n\ge 1$}. 
\]
That is, as random vectors, we have the equality 
\[
M_k - M_{k-1}=   \sum_{I\in \mathscr{D}_{k-1}} Y_I. 
\]

Our next step is to introduce the odd-even decomposition of $\mathscr{D}_{k-1}$: 
\[
\mathscr{D}_{k-1} = \mathscr{D}_{k-1}^\odd \sqcup \mathscr{D}_{k-1}^\even
\] 
with $\mathscr{D}_{k-1}^\odd$ and $\mathscr{D}_{k-1}^\even$ are two sub-families of $\mathscr{D}_{k-1}$ defined as 
\begin{align}\label{def-even-odd}
	\begin{split}
		\mathscr{D}_{k-1}^\odd &= \Big\{I\subset [0,1) : I = \Big[\frac{h-1}{2^{k-1}}, \frac{h}{2^{k-1}}\Big) \, \text{for some odd integer $1\le h \le 2^{k-1}$} \Big\},
		\\
		\mathscr{D}_{k-1}^\even &= \Big\{I\subset [0,1) : I = \Big[\frac{h-1}{2^{k-1}}, \frac{h}{2^{k-1}}\Big) \, \text{for some even integer $1\le h \le 2^{k-1}$} \Big\}.
	\end{split}
\end{align} 
It follows that 
\begin{align}\label{odd-even-dec}
	M_k - M_{k-1}=   \sum_{I\in \mathscr{D}_{k-1}^\odd} Y_I +  \sum_{I\in \mathscr{D}_{k-1}^\even}Y_I
\end{align}
and hence 
\begin{align}\label{D-D-dec}
	\E[\|M_{k}-M_{k-1}\|_{\ell^q}^p] \lesssim  \E\Big[ \Big\|  \sum_{I\in \mathscr{D}_{k-1}^\odd} Y_I \Big\|_{\ell^q}^p\Big] +  \E\Big[ \Big\|  \sum_{I\in \mathscr{D}_{k-1}^\even} Y_I \Big\|_{\ell^q}^p\Big]. 
\end{align}

{\flushleft \bf Key observation:} In the odd-even decomposition  \eqref{odd-even-dec}, for any two distinct intervals $I \ne  I'$ in  the family $\mathscr{D}_{k-1}^\odd$, the distance $\dist(I, I')$ satisfies 
\[
\dist(I, I') \ge 2^{-(k-1)}. 
\]
Therefore, by the item (P3) of Elementary Properties~\ref{elem-prop}, conditioned on $\mathscr{F}_{k-1}$, the random vectors $(Y_I)_{I\in \mathscr{D}_{k-1}^\odd}$ are independent. Moreover, by \eqref{Y-I-cond}, conditioned on $\mathscr{F}_{k-1}$, the random vectors $Y_I$ are centered.    The same holds for the random vectors indexed by $I\in \mathscr{D}_{k-1}^\even$. 

\medskip

Using the above Key observation,  with respect to the conditional expectation 
\[
\E_{k-1}[\cdot] = \E[\cdot |\mathscr{F}_{k-1}],
\]  we may apply the martingale type $p$ inequality  \eqref{def-ind-Mtype} for the Banach space $\ell^q$ to obtain 
\[
\E_{k-1}\Big[\Big\| \sum_{I\in \mathscr{D}_{k-1}^\odd} Y_I\Big\|_{\ell^q}^p \Big] \lesssim     \sum_{I\in \mathscr{D}_{k-1}^\odd} \E_{k-1}[\| Y_I\|_{\ell^q}^p ]
\]	
and hence, by taking expectation on both side, we  get 
\begin{align}\label{odd-type}
	\E  \Big[\Big\| \sum_{I\in \mathscr{D}_{k-1}^\odd} Y_I\Big\|_{\ell^q}^p \Big] \lesssim     \sum_{I\in \mathscr{D}_{k-1}^\odd} \E [\| Y_I\|_{\ell^q}^p ]. 
\end{align}
By exactly the same argument, we have
\begin{align}\label{even-type}
	\E  \Big[\Big\| \sum_{I\in \mathscr{D}_{k-1}^\even} Y_I\Big\|_{\ell^q}^p \Big] \lesssim     \sum_{I\in \mathscr{D}_{k-1}^\even} \E [\| Y_I\|_{\ell^q}^p ]. 
\end{align}

Finally,  by  combining the inequalities \eqref{first-martingale-type-inequality}, \eqref{D-D-dec}, \eqref{odd-type} and \eqref{even-type},   we get  the desired inequality 
\[
\mathbb{E}[\|M_m\|_{\ell^q}^p]\lesssim  \E[\| M_1\|_{\ell^q}^p]  +    \sum_{k=2}^{m}\sum_{I \in \mathscr{D}_{k-1}}\mathbb{E}[\|Y_I \|_{\ell^q}^p].
\]

\subsection{Proof of Proposition~\ref{UB-ZW}}\label{sec-local}

The proof of Proposition~\ref{UB-ZW} is divided into the following twelve steps. 

Fix any $k\ge 2$ and any dyadic interval $I\in \mathscr{D}_{k-1}$.  As explained before in \S~\ref{sec-sep}, our first goal is  to establish  a {\it separation-of-variable} pointwise upper estimate for $ |Y_I(n)|$ introduced in \eqref{def-Y}.

\medskip
{\flushleft \it Step 1. The lower-frequency part $1\le n \le 2^k$.}
\medskip

For the lower-frequency part $1\le n \le 2^k$,  the quantities $Y_I(n)$ are controlled by the total mass  of $\mu_{\gamma,k}$ on the interval $I$. More precisely,  here we use a  very rough upper estimate of $Y_I(n)$: 
\[
|Y_I(n)|=  \Big|  n^{\frac{\tau}{2}}\int_{I}\Big[\prod_{j=0}^{k-1}X_j(t)\Big]  \mathring{X}_{k} (t) e^{-2\pi int}\mathrm{d}t  \Big| \le      n^{\frac{\tau}{2}}\int_{I}\Big[\prod_{j=0}^{k-1}X_j(t)\Big]  |  \mathring{X}_{k} (t)| \mathrm{d}t. 
\]
Hence by defining
\begin{align}\label{def-v-0}
	v_0(n): = n^{\frac{\tau}{2}}\cdot \mathds{1}(1\le n \le 2^k)
\end{align}
and 
\begin{align}\label{def-R-0}
	R_0: = \int_{I}\Big[\prod_{j=0}^{k-1}X_j(t)\Big]  |  \mathring{X}_{k} (t)| \mathrm{d}t,
\end{align}
we obtain 
\begin{align}\label{low-Y}
	|Y_I(n)| \le v_0(n)  R_0 \quad \text{for all $1\le n \le 2^k$}.
\end{align}

\medskip
{\flushleft \it Step 2. Dyadic-discrete-time approximation for the higher-frequency part.}
\medskip

For the higher-frequency part $Y_I(n)$ with $n>2^k$,  we shall use a  finer estimate by applying a dyadic-discrete-time approximation of the stochastic process   
\begin{align}\label{def-Dk}
	D_k(t): = \Big[\prod_{j=0}^{k-1} X_j(t)\Big]  \mathring{X}_k(t), \quad t\in I. 
\end{align}
Namely, we shall approximate $D_k(t)$ by the value of $D_k$ at some dyadic $t$. It is important for our purpose to use a  finer approximation of $D_k(t)$. That is, to control $Y_I(n)$, the level of the dyadic-discrete-time approximation depends on each dyadic interval of integers $(2^{L+k-1},  2^{L+k}]\cap \N$ containing  $n$.

More precisely,  given any integer $L\ge 1$,  by using the same dyadic decomposition of $I$, we shall decompose $Y_I(n)$  in the same  manner for all integers $2^{L+k-1}<n\le 2^{L+k}$.   That is, we divide the dyadic interval  $I\in \mathscr{D}_{k-1}$  into $2^L$ equal pieces (hence each sub-interval has length $2^{-(L+k-1)}$). In other words, denote by  $\mathscr{D}_{L+k-1}(I)$ the family of sub-intervals $J \subset I$ in $\mathscr{D}_{L+k-1}$:
\begin{align}\label{DLk-1I}
	\mathscr{D}_{L+k -1}(I): = \Big\{J \subset I:   J \in \mathscr{D}_{L+k-1} \Big\}. 
\end{align}
By using  the decomposition 
\[
I= \bigsqcup_{J\in \mathscr{D}_{L+k-1}(I)} J, 
\]
we can  decompose $Y_I(n)$ as 
\[
Y_I(n)  =  n^{\frac{\tau}{2}}\int_{I}D_{k} (t) e^{-2\pi int}\mathrm{d}t  = \sum_{J\in \mathscr{D}_{L+k-1}(I)}  n^{\frac{\tau}{2}}\int_{J} D_{k} (t) e^{-2\pi int}\mathrm{d}t. 
\]

Then on each interval $J\in \mathscr{D}_{L+k-1}(I)$, we approximate $D_k(t)$ with $D_k$ evaluated on the left end-point of $J$.   That is, by writing $\ell_J$ the left end-point of the interval $J$ and using the decomposition 
\[
D_k(t)  =  [D_k(t)- D_k(\ell_J)] + D_k(\ell_J) ,  
\]
we obtain 
\begin{align}\label{Y-UV}
\begin{split}
	Y_I(n)  =&   \underbrace{ \sum_{J\in \mathscr{D}_{L+k-1}(I)}  n^{\frac{\tau}{2}}\int_{J} [D_{k} (t) -D_k(\ell_J)]e^{-2\pi int}\mathrm{d}t}_{\text{denoted $V_I(n)$}}
	\\
	&  \quad + \underbrace{\sum_{J\in \mathscr{D}_{L+k-1}(I)}  n^{\frac{\tau}{2}}\int_{J}  D_k(\ell_J) e^{-2\pi int}\mathrm{d}t}_{\text{denoted $U_I(n)$}}. 
	\end{split}
\end{align}
The two terms $V_I(n)$ and $U_I(n)$ will be controlled by different methods.

\medskip
{\flushleft \it Step 3. The simple control of $V_I(n)$.}
\medskip

The term $V_I(n)$ is controlled directly by using the triangle inequality: 
\[
|V_I(n)| \le n^{\frac{\tau}{2}} \sum_{J\in \mathscr{D}_{L+k-1}(I)}  \int_{J} |D_{k} (t) -D_k(\ell_J) | \mathrm{d}t. 
\]
Hence by defining 
\begin{align}\label{def-v-l}
	v_L(n) : = n^{\frac{\tau}{2}}\cdot  \mathds{1}(2^{L+k-1} <n \le 2^{L+k})
\end{align}
and
\begin{align}\label{def-Rl}
	R_L: =   \sum_{J\in \mathscr{D}_{L+k-1}(I)}  \int_{J} |D_{k} (t) -D_k(\ell_J) | \mathrm{d}t, 
\end{align}
we obtain 
\begin{align}\label{V-wQ}
	|V_I(n)| \le v_L(n) R_L \quad \text{for all $2^{L+k-1} <n \le 2^{L+k}$}. 
\end{align}

\begin{remark*}
	It should be emphasized that  the random variable  $R_L$ defined as above  depends on $L$ (and of course it depends on $k$, which is determined by $I$), but does not depend on $n$.   In other words, all integers $2^{L+k-1} <n \le 2^{L+k}$ share the same $R_L$. 
\end{remark*}

\medskip
{\flushleft \it Step 4. The Abel's summation method for controlling $U_I(n)$.}
\medskip

We shall apply the Abel's summation method to the term $U_I(n)$. Indeed, ordering the dyadic sub-intervals $J\in \mathscr{D}_{L+k-1}(I)$ from left to right according to their natural ordering on the real line,  we get 
\[
J_l = [t_{l-1}, t_l), \quad 1\le l \le 2^{L},
\]
with $t_0 = \ell_I$, $t_{2^L} = r_I$,  the left and right end-points of $I$ respectively and 
\[
t_l  - t_{l-1} = |J_l|= \frac{|I|}{2^L} = 2^{-(L+k-1)}, \quad \text{i.e.}\quad 
t_l = \ell_I +  l\cdot 2^{-(L+k-1)} \quad \text{for all $0 \le l \le 2^L$}. 
\]
Under the above notation, by using the elementary equality 
\[
\int_a^b e^{-2\pi i n t}\mathrm{d}t =  \frac{e^{-2\pi i n b} - e^{-2\pi i n a}}{- 2\pi in }, 
\]
we obtain 
\[
U_I(n) = \sum_{l=1}^{2^L}  n^{\frac{\tau}{2}} \int_{J_l} D_k(t_{l-1}) e^{-2\pi i nt}\mathrm{d}t =  \frac{n^{\frac{\tau}{2}}}{- 2 \pi in} \sum_{l=1}^{2^L} D_k(t_{l-1})  [e^{-2\pi i n t_l} - e^{-2 \pi i n t_{l-1}}]. 
\]
An application of Abel's summation method then yields
\begin{align}\label{Abel-cancellation}
U_I(n) =  \frac{n^{\frac{\tau}{2}}}{- 2 \pi in} \Big(  D_k(t_{2^L-1}) e^{-2\pi i n t_{2^L}}  - D_k(t_0) e^{-2\pi i n t_0} +   \sum_{l=1}^{2^L-1}   \big[D_k(t_{l-1}) - D_k(t_l)\big] e^{- 2\pi i n t_l} \Big).  
\end{align}
It follows that 
\[
|U_I(n)|\le \frac{n^{\frac{\tau}{2}-1}}{2\pi}  \Big(  |D_k(t_{2^L-1})|  + |D_k(t_0)| + \sum_{l=1}^{2^L-1} | D_k(t_{l-1}) - D_k(t_l)|\Big). 
\]
Hence by defining 
\begin{align}\label{def-wl}
	w_L(n) : = n^{\frac{\tau}{2}-1} \cdot \mathds{1}(2^{L+k-1} <n \le 2^{L+k})
\end{align}
and
\begin{align}\label{def-Ql}
	Q_L: = \frac{1}{2\pi}  \Big(|D_k(t_{2^L-1})|  + |D_k(t_0)| + \sum_{l=1}^{2^L-1} | D_k(t_{l-1}) - D_k(t_l)|\Big), 
\end{align}
we obtain 
\begin{align}\label{U-vR}
	|U_I(n)| \le w_L(n) Q_L \quad \text{for all $2^{L+k-1} <n \le 2^{L+k}$}. 
\end{align}

\medskip
{\flushleft \it Step 5. Separation-of-variable estimate of $Y_I(n)$.}
\medskip

Combining  \eqref{low-Y}, \eqref{Y-UV},  \eqref{V-wQ} and \eqref{U-vR}, we obtain  the desired separation-of-variable estimate
\begin{align}\label{sep-Y}
	|Y_I(n)| \le    \sum_{L=0}^\infty v_L(n) R_L + \sum_{L=1}^\infty w_L(n) Q_L \quad \text{for all integers $n\ge 1$,}
\end{align}
where $R_L$, $Q_L$ are non-negative random variables and  $v_L$, $w_L$ are deterministic   sequences of non-negative numbers with supports 
\[
	\supp(v_0)  =  [1, 2^k] \cap\N \an \supp(v_L)  = \supp(w_L) = (2^{k+L-1}, 2^{k+L}] \cap \N  \quad  \text{for all $L\ge 1$}. 
\]

\medskip
{\flushleft \it Step 6.  Upper estimate of $\E[\|Y_I\|_{\ell^q}^p]$ via separation-of-variable.}
\medskip

We are going to use   the following elementary inequality (since $0<p/q<1$): 
\begin{align}\label{xy-small}
	\Big(\sum_{i=1}^\infty x_i \Big)^{p/q} \le  \sum_{i=1}^\infty x_i^{p/q} \quad \text{for any  non-negative numbers $x_i\ge 0$}.
\end{align}
Since $v_L$'s in the separation-of-variable estimate  \eqref{sep-Y} have disjoint supports (and so do $w_L$'s),  by using the elementary inequality $(x+y)^q\le 2^q x^q + 2^q y^q$ for all non-negative real numbers $x,y$, we obtain that for any $n\geq1$,
\begin{align*}
|Y_I(n)|^q &  \le     2^q\Big(  \sum_{L=0}^\infty v_L(n)  R_L\Big)^q  + 2^q \Big(\sum_{L=1} ^\infty w_L(n)  Q_L\Big)^q
\\
&=   2^q  \sum_{L=0}^\infty v_L(n)^q  R_L^q  + 2^q \sum_{L=1} ^\infty w_L(n)^q Q_L^q.
\end{align*}
It follows that
\begin{align*}
	\|Y_I\|_{\ell^q}^q & =   \sum_{n=1}^\infty |Y_I(n)|^q  \le      \sum_{n = 1}^\infty   \Big( 2^q \sum_{L=0}^\infty v_L(n)^q  R_L^q  + 2^q \sum_{L=1}^\infty w_L(n)^q Q_L^q \Big)\\
	& =    2^q \sum_{L=0}^\infty   \|v_L\|_{\ell^q}^q  R_L^q  +  2^q \sum_{L=1}^\infty    \|w_L\|_{\ell^q}^q Q_L^q .
\end{align*}
Thus, by applying the inequality \eqref{xy-small}, we  get
\begin{align*}
\|Y_I\|_{\ell^q}^p & \le   \Big \{   2^q \sum_{L=0}^\infty   \|v_L\|_{\ell^q}^q  R_L^q  +  2^q \sum_{L=1}^\infty    \|w_L\|_{\ell^q}^q Q_L^q  \Big\}^{p/q} 
\\
& \le   2^p     \sum_{L=0}^\infty \|v_L\|_{\ell^q}^p  R_L^p  + 2^p  \sum_{L=1}^\infty \|w_L\|_{\ell^q}^p Q_L^p 
\end{align*}
and thus 
\begin{align}\label{E-Y-I}
	\E[\|Y_I\|_{\ell^q}^p]  \le 2^p    \sum_{L=0}^\infty \|v_L\|_{\ell^q}^p \cdot \E[R_L^p]  + 2^p \sum_{L=1}^\infty \|w_L\|_{\ell^q}^p\cdot \E[ Q_L^p].
\end{align}

\medskip
{\flushleft \it Step 7.  Simple estimates of the quantities $\|v_L\|_{\ell^q}^p$ and $\|w_L\|_{\ell^q}^p$.}
\medskip

By the definitions \eqref{def-v-0}, \eqref{def-v-l} and \eqref{def-wl} for $v_L$ and $w_L$, we have 
\begin{align}\label{cal-v-0}
	\|v_0\|_{\ell^q}^p   = \Big( \sum_{n=1}^{2^k} n^{\frac{\tau q}{2}}\Big)^{p/q}  \lesssim     2^{(\frac{\tau p}{2} +\frac{p}{q})k}
\end{align}
and for all $L\ge 1$, 
\begin{align}\label{vL-wL-1}
	\|v_L\|_{\ell^q}^p  = \Big(  \sum_{2^{L+k-1}<n\le 2^{L+k}} n^{\frac{\tau q}{2}}  \Big)^{p/q}  \lesssim       2^{(\frac{\tau p}{2} +\frac{p}{q})(L+k)},  
\end{align}
\begin{align}\label{vL-wL}
	\|w_L\|_{\ell^q}^p    = \Big(  \sum_{2^{L+k-1}<n\le 2^{L+k}} n^{\frac{\tau q}{2}-q }  \Big)^{p/q}  \lesssim   2^{ (\frac{\tau p}{2} - p+\frac{p}{q})(L+k)}.
\end{align}

\medskip
{\flushleft \it Step 8. Estimate of $\E[R_0^p]$.}
\medskip

Recall the definition \eqref{def-R-0} for $R_0$. By the triangle inequality, we have 
\[
(\E[R_0^p])^{1/p} = \Big\| 
\int_{I}\Big[\prod_{j=0}^{k-1}X_j(t)\Big]  |  \mathring{X}_{k} (t)| \mathrm{d}t\Big\|_{L^p(\PP)}
\le 
\int_{I}  \Big\|  \Big[\prod_{j=0}^{k-1}X_j(t)\Big]  |  \mathring{X}_{k} (t)|  \Big\|_{L^p(\PP)} \mathrm{d}t.
\]
Since the stochastic processes $\{X_j\}_{0\le j \le k}$ are independent, by Lemma~\ref{p-moment-X_m}, we have 
\begin{align}\label{Dt-Lp}
	\Big\|  \Big[\prod_{j=0}^{k-1}X_j(t)\Big]  |  \mathring{X}_{k} (t)|  \Big\|_{L^p(\PP)} =   \Big(\Big[\prod_{j=0}^{k-1} \E[X_j^p(t)]\Big]   \cdot \E [|  \mathring{X}_{k} (t)|^p]\Big)^{1/p} 
	\lesssim   2^{\frac{(p-1)\gamma^2}{2}k}. 
\end{align}
Therefore,  by recalling $|I|=2^{-(k-1)}$, we obtain 
\begin{align}\label{R0-es}
	(\E[R_0^p])^{1/p} \lesssim  2^{[\frac{(p-1)\gamma^2}{2}-1]k}. 
\end{align}

\medskip
{\flushleft \it Step 9. Control of the difference $D_k(t)-D_k(s)$.}
\medskip

From the expressions \eqref{def-Rl} and \eqref{def-Ql}, we are led to study the difference $D_k(t)-D_k(s)$.  Then the elementary identity \eqref{elem-id} will be used again.  To ease the notation, we rewrite $D_k$ introduced in \eqref{def-Dk} as 
\[
D_k(t)= \prod_{j=0}^{k} Z_j(t)  \quad \text{with}\quad Z_j(t)=
\left\{ \begin{array}{cl}
	X_j(t) & \text{if $0\le j \le k-1$}
	\vspace{2mm}
	\\
	\mathring{X}_k(t) & \text{if $j=k$} 
\end{array}
\right..
\]
Note that since $\mathring{X}_k (t) =X_k(t) -1$, we have  
\[
\mathring{X}_k(t) - \mathring{X}_k(s) = X_k(t)-X_k(s)
\]
and thus 
\[
|Z_j(t)-Z_j(s)| = |X_j(t)-X_j(s)| \quad \text{for all $0\le j \le k$}.
\]
Then by \eqref{elem-id}, we obtain 
\begin{align*}
		|D_k(t)-D_k(s)|  &\le   \sum_{r=0}^{k}\Big(\prod_{j=0}^{r-1} |Z_j(s)|\Big) \big|Z_r(t)-Z_r(s)\big| \Big(\prod_{j=r+1}^{k}|Z_j(t)|\Big)
		\\
		& =  \sum_{r=0}^{k}\Big(\prod_{j=0}^{r-1} X_j(s) \Big) \big|X_r(t)-X_r(s)\big| \Big(\prod_{j=r+1}^{k}|Z_j(t)|\Big).
\end{align*}
Therefore, for any $t, s\in [0,1]$ such that $|t-s|\le 2^{-k}$, by applying  the triangle inequality, the independence of $\{X_j\}_{0\le j \le k}$ and then Lemma~\ref{p-moment-X_m},  Lemma~\ref{Holder-p-moment-X_m}, we obtain 
\begin{align}\label{Dt-Ds-E}
	\begin{split}
		\| D_k(t)-D_k(s)\|_{L^p(\PP)} &  \le    \sum_{r=0}^{k}\Big\|\Big(\prod_{j=0}^{r-1} X_j(s) \Big) \big|X_r(t)-X_r(s)\big| \Big(\prod_{j=r+1}^{k}|Z_j(t)|\Big)\Big\|_{L^p(\PP)}
		\\
		& \lesssim \sum_{r=0}^k   2^{\frac{(p-1)\gamma^2}{2}k} \sqrt{2^r|t-s|} 
		\\
		& \lesssim 2^{\frac{(p-1)\gamma^2 +1}{2}k} \sqrt{|t-s|} . 
	\end{split}
\end{align}

\medskip
{\flushleft \it Step 10. Estimate of $\E[R_L^p]$ for $L\ge 1$.}
\medskip

Recall the definition \eqref{def-Rl} of $R_L$ for $L\ge 1$. We have 
\begin{align*}
	\|R_L\|_{L^p(\PP)}&=  \Big\|  \sum_{J\in \mathscr{D}_{L+k-1}(I)}  \int_{J} |D_{k} (t) -D_k(\ell_J) | \mathrm{d}t\Big\|_{L^p(\PP)}
	\\
	&  \le  \sum_{J\in \mathscr{D}_{L+k-1}(I)}  \int_{J}   \|  D_{k} (t) -D_k(\ell_J)  \|_{L^p(\PP)} \mathrm{d}t.
\end{align*}
Now by \eqref{Dt-Ds-E} and the fact that $|t-\ell_J|\le |J| = 2^{-(L+k-1)}\leq2^{-k}$ for all $t\in J\in \mathscr{D}_{L+k-1}(I)$, we obtain 
\[
(\E[R_L^p])^{1/p} \lesssim   \sum_{J\in \mathscr{D}_{L+k-1}(I)}     2^{\frac{(p-1)\gamma^2 +1}{2}k}  \cdot 2^{-\frac{3}{2}(L+k)}.
\]
Note that by the definition \eqref{DLk-1I} of $\mathscr{D}_{L+k-1}(I)$ (recall that the interval $I$ is divided into $2^L$ equal pieces), we have 
\[
\# \mathscr{D}_{L+k-1}(I)  = 2^L. 
\]
Hence we get
\begin{align}\label{RL-es}
	(\E[R_L^p])^{1/p}   \lesssim 2^{L} \cdot 2^{\frac{(p-1)\gamma^2 +1}{2}k}   \cdot 2^{-\frac{3}{2}(L+k)}  = 2^{-\frac{L}{2}} \cdot 2^{[\frac{(p-1)\gamma^2}{2}-1]k}. 
\end{align}

\medskip
{\flushleft \it Step 11. Estimate of $\E[Q_L^p]$ for $L\ge 1$.}
\medskip

Recall the definition \eqref{def-Ql} of $Q_L$ for $L\ge 1$. We have  
\begin{align*}
	(\E[Q_L^p])^{1/p} & = \frac{1}{2\pi}\Big\|
	|D_k(t_{2^L-1})|  + |D_k(t_0)| + \sum_{l=1}^{2^L-1} | D_k(t_{l-1}) - D_k(t_l)|\Big\|_{L^p(\PP)}
	\\
	& \lesssim \| D_k(t_{2^L-1}) \|_{L^p(\PP)}   + \|D_k(t_0)\|_{L^p(\PP)}  + \sum_{l=1}^{2^L-1} \| D_k(t_{l-1}) - D_k(t_l)\|_{L^p(\PP)}. 
\end{align*}
Note that by the same calculation as in \eqref{Dt-Lp} (or by directly using the translation-invariance), we have  
\[
\| D_k(t_{2^L-1}) \|_{L^p(\PP)}   =  \|D_k(t_0)\|_{L^p(\PP)}  \lesssim 2^{\frac{(p-1)\gamma^2}{2}k}. 
\]
On the other hand, by \eqref{Dt-Ds-E} and by using $|t_l-t_{l-1}| = 2^{-(L+k-1)}\leq2^{-k}$, we obtain 
\[
\| D_k(t_{l-1}) - D_k(t_l)\|_{L^p(\PP)} \lesssim  2^{\frac{(p-1)\gamma^2 +1}{2}k} \cdot 2^{-\frac{1}{2}(L+k)}.
\]
It follows that 
\begin{align}\label{QL-es}
	(\E[Q_L^p])^{1/p} \lesssim  2^{\frac{(p-1)\gamma^2}{2}k} + 2^L \cdot 2^{\frac{(p-1)\gamma^2 +1}{2}k} \cdot 2^{-\frac{1}{2}(L+k)} \lesssim 2^{\frac{L}{2}} \cdot 2^{\frac{(p-1)\gamma^2 }{2}k} .
\end{align}
\begin{remark*}
	Note that here $(\E[Q_L^p])^{1/p}$ is large when $k$ or $L$ is large.  However,  the product 
	$
	\|w_L\|_{\ell^q}^p \cdot  \E[Q_L^p]
	$
	becomes very small. 
\end{remark*}

\medskip
{\flushleft \it Step 12. Conclusion of the estimate of $\E[\|Y_I\|_{\ell^q}^p]$.}
\medskip

Combining the inequalities \eqref{cal-v-0} and \eqref{R0-es}, we obtain 
\[
\|v_0\|_{\ell^q}^p \cdot \E[R_0^p]       \lesssim     2^{(\frac{\tau p}{2} +\frac{p}{q})k} \cdot (2^{[\frac{(p-1)\gamma^2}{2}-1]k})^p  = 2^{-k (p - \frac{p(p-1)\gamma^2}{2} - \frac{\tau p}{2} - \frac{p}{q})}.
\]
For all integers $L\ge 1$, by   \eqref{vL-wL-1}, \eqref{RL-es}, 
\begin{align*}
\|v_L\|_{\ell^q}^p \cdot \E[R_L^p] & \lesssim  2^{(\frac{\tau p}{2} +\frac{p}{q})(L+k)}\cdot  (2^{-\frac{L}{2}} \cdot 2^{[\frac{(p-1)\gamma^2 }{2}-1]k})^p 
\\
& = 2^{-k (p - \frac{p(p-1)\gamma^2}{2} - \frac{\tau p}{2} - \frac{p}{q})} \cdot 2^{-pL(\frac{1-\tau}{2} -\frac{1}{q})}
\end{align*}
and by   \eqref{vL-wL},  \eqref{QL-es}, 
 \begin{align*}
 \|w_L\|_{\ell^q}^p \cdot \E[Q_L^p] & \lesssim  2^{ (\frac{\tau p}{2} - p+\frac{p}{q})(L+k)}\cdot  (2^{\frac{L}{2}} \cdot 2^{\frac{(p-1)\gamma^2 }{2}k})^p
 \\
 &  = 2^{-k (p - \frac{p(p-1)\gamma^2}{2} - \frac{\tau p}{2} - \frac{p}{q})} \cdot 2^{-pL(\frac{1-\tau}{2} -\frac{1}{q})}. 
\end{align*}
Therefore, by \eqref{E-Y-I}, we obtain 
\[
	\E[\|Y_I\|_{\ell^q}^p]  \lesssim  2^{-k (p - \frac{p(p-1)\gamma^2}{2} - \frac{\tau p}{2} - \frac{p}{q})} \Big[1 + \sum_{L=1}^\infty  2^{-pL(\frac{1-\tau}{2} -\frac{1}{q})} +  \sum_{L=1}^\infty  2^{-pL (\frac{1-\tau }{2} - \frac{1}{q})} \Big]. 
\]
Since $q>\frac{4}{1-\tau}$, we have 
\[
\frac{1-\tau}{2}- \frac{1}{q}>  \frac{1-\tau}{2}-  \frac{1-\tau}{4} =   \frac{1-\tau}{4}>0.
\]
Hence we get
\[
\sum_{L=1}^\infty  2^{-pL (\frac{1-\tau }{2} - \frac{1}{q})} <\infty. 
\]
Consequently, we get the desired inequality 
\[
\E[\|Y_I\|_{\ell^q}^p]  \lesssim 2^{-k (p - \frac{p(p-1)\gamma^2}{2} - \frac{\tau p}{2} - \frac{p}{q})} .
\]
This completes the whole proof of Proposition~\ref{UB-ZW}.

\section{Proof of Theorem~\ref{FourierDim-GMC}}

Theorem~\ref{FourierDim-GMC} follows immediately from Lemma~\ref{Lower-Bound-FourierDim-GMC} and Lemma~\ref{Upper-Bound-FourierDim-GMC} below.  

\subsection{Lower bound of Fourier dimension for GMC}
\begin{lemma}\label{Lower-Bound-FourierDim-GMC}
	For each $\gamma\in(0,\sqrt{2})$,  almost surely,  we have $\mathrm{dim}_{F}(\mu_{\gamma,\mathrm{GMC}})\geq D_{\gamma}$. 
\end{lemma}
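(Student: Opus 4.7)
The plan is to upgrade the uniform $L^p(\ell^q)$-bound from Theorem~\ref{Uniform-Bound} to a pointwise almost sure decay estimate for $\widehat{\mu_{\gamma,\mathrm{GMC}}}$, and then apply the reduction \eqref{R-to-Z} to derive the Fourier dimension bound. Fix any $\tau \in (0, D_\gamma)$ and choose exponents $1<p<2$ and $q > \frac{4}{1-\tau}$ as in Theorem~\ref{Uniform-Bound}, so that $\sup_{m\ge 0}\mathbb{E}[\|M_m\|_{\ell^q}^p] < \infty$.

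The first step is to pass from the approximating martingale to the limit measure. By the weak convergence \eqref{weacontoGMC}, almost surely, for every fixed $n \ge 1$, $\widehat{\mu_{\gamma,m}}(n) \to \widehat{\mu_{\gamma,\mathrm{GMC}}}(n)$ as $m \to \infty$ (the functions $t \mapsto e^{-2\pi i n t}$ are continuous on $[0,1]$). Hence for each fixed $N \ge 1$, the truncated $\ell^q$-norm
\[
\Big(\sum_{n=1}^N |n^{\tau/2}\widehat{\mu_{\gamma,m}}(n)|^q\Big)^{p/q}
\]
converges almost surely to its GMC analogue. Fatou's lemma then yields
\[
\mathbb{E}\Big[\Big(\sum_{n=1}^N |n^{\tau/2}\widehat{\mu_{\gamma,\mathrm{GMC}}}(n)|^q\Big)^{p/q}\Big] \le \liminf_{m\to\infty}\mathbb{E}[\|M_m\|_{\ell^q}^p] \le \sup_{m\ge 0}\mathbb{E}[\|M_m\|_{\ell^q}^p] < \infty,
\]
and the monotone convergence theorem, letting $N \to \infty$, gives
\[
\mathbb{E}\Big[\Big(\sum_{n=1}^\infty |n^{\tau/2}\widehat{\mu_{\gamma,\mathrm{GMC}}}(n)|^q\Big)^{p/q}\Big] < \infty.
\]

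In particular, almost surely the series $\sum_{n=1}^\infty |n^{\tau/2}\widehat{\mu_{\gamma,\mathrm{GMC}}}(n)|^q$ is finite, so its general term tends to zero; a fortiori $|\widehat{\mu_{\gamma,\mathrm{GMC}}}(n)|^2 = O(n^{-\tau})$ as $n \to \infty$. By the integer-to-real reduction \eqref{R-to-Z}, this integer decay rate transfers to continuous $\xi$, giving $\dim_F(\mu_{\gamma,\mathrm{GMC}}) \ge \tau$ almost surely. Finally, intersecting the full-measure events along a countable sequence $\tau_k \uparrow D_\gamma$ yields $\dim_F(\mu_{\gamma,\mathrm{GMC}}) \ge D_\gamma$ almost surely.

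There is no serious obstacle here: all the substantive analytic work is concentrated in Theorem~\ref{Uniform-Bound}, and the present lemma is essentially a standard weak-convergence plus Fatou argument. The only minor care required is to make sure that the exceptional null set does not depend on $\tau$, which is handled by the countable-sequence device above.
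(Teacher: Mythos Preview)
Your proof is correct and follows essentially the same route as the paper's own argument: fix $\tau\in(0,D_\gamma)$, invoke Theorem~\ref{Uniform-Bound}, pass to the limit via \eqref{weacontoGMC} to bound $\mathbb{E}\big[\big(\sum_n |n^{\tau/2}\widehat{\mu_{\gamma,\mathrm{GMC}}}(n)|^q\big)^{p/q}\big]$, deduce the pointwise decay, and conclude by letting $\tau\uparrow D_\gamma$ along a countable sequence. The only cosmetic difference is that the paper appeals to a ``standard fact for vector-valued martingales'' to obtain the limiting bound as an equality, whereas you spell out the passage to the limit explicitly via Fatou and monotone convergence, obtaining only the inequality---which is all that is needed here.
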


\begin{proof}
	Fix any $\gamma\in(0,\sqrt{2})$. For any $\tau\in(0,D_{\gamma})$, take $p=p(\gamma,\tau)$ and $q=q(\gamma,\tau)$ as in Theorem~\ref{Uniform-Bound}. Then by \eqref{weacontoGMC}, Theorem~\ref{Uniform-Bound} combined with the standard fact for vector-valued martingales implies that 
	\[
	\mathbb{E}\Big[\Big\{\sum_{n=1}^{\infty}n^{\frac{\tau q}{2}}\big|\widehat{\mu_{\gamma,\mathrm{GMC}}}(n) \big|^{q}\Big\}^{p/q}\Big] = \sup_{m\geq 1}\mathbb{E}[\|M_m\|_{\ell^{q}}^{p}]<\infty
	\]
	and hence 
	\[
	\sum_{n=1}^{\infty}n^{\frac{\tau q}{2}}\big|\widehat{\mu_{\gamma,\mathrm{GMC}}}(n) \big|^{q}<\infty \quad a.s. 
	\]
	Consequently,  almost surely, 
	\[
	|\widehat{\mu_{\gamma,\mathrm{GMC}}}(n)|^2=O(n^{-\tau})  \quad \text{as $n\to\infty$},
	\]
	and hence $\mathrm{dim}_{F}(\mu_{\gamma,\mathrm{GMC}})\ge \tau$.  Finally, by taking a sequence $\{\tau_N\}_{N\geq1}\subset(0,D_{\gamma})$ with $\lim_{N\to\infty}\tau_N=D_{\gamma}$, we conclude that,  almost surely, 
	$\mathrm{dim}_{F}(\mu_{\gamma,\mathrm{GMC}})\geq D_{\gamma}$. 
\end{proof}

\subsection{Upper bound of Fourier dimension for GMC}

\begin{lemma}\label{Upper-Bound-FourierDim-GMC}
	For each $\gamma\in(0,\sqrt{2})$,  almost surely,  we have $\mathrm{dim}_{F}(\mu_{\gamma,\mathrm{GMC}})\leq D_{\gamma}$. 
\end{lemma}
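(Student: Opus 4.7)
The plan is to deduce this upper bound from two ingredients already at hand in the paper: the classical inequality $\dim_F(\nu) \le \dim_2(\nu)$ recalled as \eqref{F-less-cor}, together with the almost sure identity \eqref{cor-dim-eq}, namely $\dim_2(\mu_{\gamma,\mathrm{GMC}}) = D_\gamma$. Given $\gamma \in (0,\sqrt{2})$, I would apply \eqref{F-less-cor} to the (almost surely finite and nonzero) random measure $\mu_{\gamma,\mathrm{GMC}}$, and then substitute \eqref{cor-dim-eq} on the same almost sure event. This yields $\dim_F(\mu_{\gamma,\mathrm{GMC}}) \le D_\gamma$ almost surely, which is exactly the claim. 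In effect the proof is a two-line verification once both cited facts are accepted.

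For completeness, in the $L^2$ phase $\gamma \in (0, \sqrt{2}/2)$ one can verify \eqref{cor-dim-eq} by a direct Riesz-energy computation, without appealing to external multifractal results. Using the characterization \eqref{R-en-def} and the multiplicative chaos identity $\mathbb{E}[d\mu_{\gamma,\mathrm{GMC}}(x)\,d\mu_{\gamma,\mathrm{GMC}}(y)] = |x-y|^{-\gamma^2}\,dx\,dy$ on $[0,1]^2$, Fubini gives
\[
\mathbb{E}\Big[\int_{[0,1]^2} \frac{d\mu_{\gamma,\mathrm{GMC}}(x)\,d\mu_{\gamma,\mathrm{GMC}}(y)}{|x-y|^s}\Big] = \int_{[0,1]^2} \frac{dx\,dy}{|x-y|^{s+\gamma^2}},
\]
which is finite if and only if $s < 1 - \gamma^2 = D_\gamma$. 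The inequality $\dim_2(\mu_{\gamma,\mathrm{GMC}}) \ge D_\gamma$ is immediate; for the matching upper bound one converts the divergence of the expected energy at $s = D_\gamma$ into almost sure divergence via a Paley--Zygmund-type second moment comparison, which is valid precisely in this $L^2$ regime. Alternatively, the Garban--Vargas CLT \eqref{CLT-GV} gives $\dim_F(\mu_{\gamma,\mathrm{GMC}}) \le 1 - \gamma^2$ at once, since the non-degeneracy of the limit precludes any stronger almost sure polynomial decay rate for $\widehat{\mu_{\gamma,\mathrm{GMC}}}(n)$ along subsequences.

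The main obstacle to a fully self-contained proof lies in the range $\gamma \in [\sqrt{2}/2, \sqrt{2})$, where the second moment of $\mu_{\gamma,\mathrm{GMC}}$ is infinite, the naive Riesz-energy computation breaks down, and the Garban--Vargas CLT is not available. There one must rely on the full sub-critical multifractal analysis (Rhodes--Vargas~\cite{RV14}, Bertacco~\cite{Ber23}), which identifies the $L^2$-spectrum and hence $\dim_2$ via finer positive and negative moment bounds and yields the non-trivial value $(\sqrt{2} - \gamma)^2$. Since the paper cites these references for \eqref{cor-dim-eq}, the proof as written consists exactly of the two-line combination of \eqref{F-less-cor} and \eqref{cor-dim-eq} described above, with \eqref{cor-dim-eq} treated as a black box.
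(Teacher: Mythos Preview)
Your proposal is correct and matches the paper's primary proof essentially line for line: the paper invokes Bertacco's computation of the $L^2$-spectrum to obtain \eqref{cor-dim-eq} and then applies the potential-theoretic inequality \eqref{F-less-cor}, exactly as you describe. The paper additionally supplies a second, alternative proof via a Kolmogorov zero-one law combined with the Lacoin--Rhodes--Vargas Riesz-energy criterion, but your route coincides with the first and main argument.
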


\subsubsection{Proof of Lemma \ref{Upper-Bound-FourierDim-GMC} via $L^2$-spectrum of GMC}

In   \cite{Ber23}, Bertacco  introduced the $L^q$-spectrum of a measure $\nu$ by 
\[
\tau_\nu(q): =   \limsup_{\delta\to 0^+}\frac{\displaystyle \log \Big(\sup \sum_i \nu \big(B(x_i, \delta)\big)^q \Big)}{-\log \delta}, \quad q\in \R,
\]
where the supremum is taken over all families of disjoint balls. From Bertacco's definition, we have  (the $\limsup$ becomes $\liminf$ after multiplication by $-1$)
\begin{align}\label{B-Lq-def}
-\tau_\nu(q) =   \liminf_{\delta \to 0^+}\frac{\displaystyle \log \Big(\sup \sum_i \nu \big(B(x_i, \delta)\big)^q \Big)}{\log \delta}.
\end{align}
By comparing  \eqref{def-sup-ball} and \eqref{B-Lq-def}, we get 
\[
\dim_2(\nu)= - \tau_\nu(2). 
\]
Note that the above equality is a particular case of \cite[Lemma 2.6.6]{BSS23}. 

\begin{remark*}
One may note that the right hand side of \eqref{B-Lq-def} is the definition of the $L^q$-spectrum of $\nu$ in \cite[Definition 2.6.7 and  Formula (2.30)]{BSS23}. 
\end{remark*}

Bertacco \cite[Theorem 3.1 and Formula (3.2)]{Ber23}, as well as Rhodes-Vargas  \cite[Section~4.2]{RV14} and   Garban-Vargas \cite[Remark~2]{GV23},  computed the $L^q$-spectrum  $\tau_{\mu_{\gamma,\mathrm{GMC}}}(q)$ of the sub-critical GMC measures  for all $q\in \R$ and all dimensions $d\ge 1$, where  the more general perturbed log-kernel of the form \eqref{bdd-pert} was studied.   In particular, for our purpose of the GMC measure with dimension $d=1$, it was shown that for any $\gamma \in(0,\sqrt{2})$,  almost surely,  
\begin{align*}
\dim_2(\mu_{\gamma,\mathrm{GMC}})= - \tau_{\mu_{\gamma,\mathrm{GMC}}}(2)= 
\left\{
\begin{array}{cc}
\xi_{\mu_{\gamma,\mathrm{GMC}}}(2) - 1 & \text{if $2\le \sqrt{2}/\gamma$}
\vspace{2mm}
\\
2 \xi_{\mu_{\gamma,\mathrm{GMC}}}'(\sqrt{2}/\gamma) & \text{if $2\ge  \sqrt{2}/\gamma$} 
\end{array}
\right., 
\end{align*}
where $\xi_{\mu_{\gamma,\mathrm{GMC}}}(q)$ (see \cite[Formula~(2.5)]{Ber23}) is the power law spectrum of the GMC measure $\mu_{\gamma,\mathrm{GMC}}$ given by 
\[
\xi_{\mu_{\gamma,\mathrm{GMC}}}(q) = \Big(1 + \frac{1}{2}\gamma^2 \Big)q - \frac{1}{2}\gamma^2 q^2, \quad q \in \R. 
\]
Therefore, by an elementary computation,  for any $\gamma \in(0,\sqrt{2})$,  almost surely we have,  
\[
\dim_2(\mu_{\gamma,\mathrm{GMC}}) = \left\{
\begin{array}{cl}
1-\gamma^2 & \text{if $0<\gamma \le \sqrt{2}/2$}
\vspace{2mm}
\\
2+\gamma^2 - 2 \sqrt{2} \gamma & \text{if $\sqrt{2}/2\le \gamma <\sqrt{2}$} 
\end{array}
\right..
\]
In other words,  for any $\gamma \in(0,\sqrt{2})$,  recall $D_\gamma$ given by \eqref{D-gamma}, then almost surely,  
\[
\dim_2(\mu_{\gamma,\mathrm{GMC}})   = D_\gamma.
\]

Finally, we complete the proof of Lemma \ref{Upper-Bound-FourierDim-GMC} by applying the standard inequality \eqref{F-less-cor} in potential theory: $\dim_F(\nu)\le \dim_2(\nu)$ for any finite Borel measure $\nu$ supported on a compact subset.

\subsubsection{An alternative proof of Lemma \ref{Upper-Bound-FourierDim-GMC}}
The almost sure  upper bound  
\begin{align}\label{up-bdd-pf}
\mathrm{dim}_{F}(\mu_{\gamma,\mathrm{GMC}})\leq D_{\gamma} \quad a.s. 
\end{align}
 is also known to Lacoin-Rhodes-Vargas and Garban-Vargas, see \cite{LRV15}  and also \cite[Remark~2]{GV23}.   Here, based on \cite[Remark~2]{GV23}, we provide an alternative proof of \eqref{up-bdd-pf}.

For proving \eqref{up-bdd-pf}, we need to use the result in \cite{LRV15} and a simple application of the Kolmogorov's zero-one law.   Let us fix any $\gamma\in(0,\sqrt{2})$.  For any $\beta>0$, consider the  event $A_\beta$ defined by 
\[
A_\beta:= \left\{ \int_{[0,1]^2} \frac{\mu_{\gamma,\mathrm{GMC}}(\mathrm{d}t) \mu_{\gamma,\mathrm{GMC}}(\mathrm{d}s)}{|t - s|^\beta}  < \infty \right\}.
\]

\begin{lemma}\label{0-1-law}
	For any $\beta>0$, we have $\PP(A_\beta)\in \{0,1\}$. 
\end{lemma}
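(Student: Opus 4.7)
The plan is to show that $A_\beta$ lies in the tail $\sigma$-algebra of the independent sequence of stochastic processes $\{X_j\}_{j \geq 0}$ and then invoke Kolmogorov's zero-one law, whose applicability is guaranteed by item (P2) of Elementary Properties~\ref{elem-prop}. The core structural input is a multiplicative factorization: for each $k \geq 1$, setting $F_k(t) := \prod_{j=0}^{k-1} X_j(t)$, I would establish an almost sure identity of random measures
\[
\mu_{\gamma,\mathrm{GMC}}(\mathrm{d}t) = F_k(t)\,\widetilde{\mu}^{(k)}(\mathrm{d}t),
\]
where $\widetilde{\mu}^{(k)}$ is $\sigma(X_j : j\geq k)$-measurable. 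To construct $\widetilde{\mu}^{(k)}$, I would consider the random measures $\nu_m^{(k)}(\mathrm{d}t) := \prod_{j=k}^{m} X_j(t)\,\mathrm{d}t$ for $m \geq k$ and observe that, since $F_k$ is a.s. continuous and strictly positive on $[0,1]$ (by Corollary~\ref{cor-holder} together with the fact that each $X_j$ is a positive exponential), the a.s. weak convergence $\mu_{\gamma,m} \to \mu_{\gamma,\mathrm{GMC}}$ transfers to an a.s. weak convergence $\nu_m^{(k)} \to F_k^{-1}\,\mu_{\gamma,\mathrm{GMC}} =: \widetilde{\mu}^{(k)}$ by testing against continuous functions $f$ (noting $f/F_k$ is still continuous). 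As an a.s. weak limit of $\sigma(X_j : j\geq k)$-measurable random measures, $\widetilde{\mu}^{(k)}$ inherits that measurability.

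With this factorization in hand, the a.s. continuity and strict positivity of $F_k$ on the compact interval $[0,1]$ produce random constants $0 < c_k \leq C_k < \infty$ with $c_k \leq F_k(t) \leq C_k$ for every $t \in [0,1]$, and consequently
\[
c_k^2 \int_{[0,1]^2} \frac{\widetilde{\mu}^{(k)}(\mathrm{d}t)\,\widetilde{\mu}^{(k)}(\mathrm{d}s)}{|t-s|^\beta} \leq \int_{[0,1]^2} \frac{\mu_{\gamma,\mathrm{GMC}}(\mathrm{d}t)\,\mu_{\gamma,\mathrm{GMC}}(\mathrm{d}s)}{|t-s|^\beta} \leq C_k^2 \int_{[0,1]^2} \frac{\widetilde{\mu}^{(k)}(\mathrm{d}t)\,\widetilde{\mu}^{(k)}(\mathrm{d}s)}{|t-s|^\beta}.
\]
It follows that, up to a null set, $A_\beta$ coincides with the event that the $\beta$-Riesz energy of $\widetilde{\mu}^{(k)}$ is finite, and this latter event is $\sigma(X_j : j \geq k)$-measurable. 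Since $k \geq 1$ was arbitrary, $A_\beta$ belongs to the tail $\sigma$-algebra $\bigcap_{k \geq 1} \sigma(X_j : j \geq k)$, and the independence of $\{X_j\}_{j\geq 0}$ combined with Kolmogorov's zero-one law will then give $\PP(A_\beta) \in \{0, 1\}$.

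I do not anticipate a serious obstacle; the only point requiring some care is the justification of the a.s. weak convergence $\nu_m^{(k)} \to \widetilde{\mu}^{(k)}$ together with the resulting measurability claim. Both reduce to a routine weak-convergence argument once one exploits the a.s. continuity and strict positivity of $F_k$ on $[0,1]$ afforded by the H\"older modifications fixed in Convention~\ref{conv-mod}.
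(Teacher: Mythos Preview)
Your proposal is correct and follows essentially the same approach as the paper: factor $\mu_{\gamma,\mathrm{GMC}}$ as a continuous, strictly positive density $F_k$ (the paper's $R_{\gamma,m}$) times a tail-measurable random measure $\widetilde{\mu}^{(k)}$ (the paper's $\mu_{\gamma,>m}$), use the two-sided bounds on $F_k$ to identify $A_\beta$ with the corresponding Riesz-energy event for $\widetilde{\mu}^{(k)}$ up to a null set, and then apply Kolmogorov's zero-one law. The only cosmetic difference is that you justify the existence of $\widetilde{\mu}^{(k)}$ via weak convergence of $F_k^{-1}\mu_{\gamma,m}$, whereas the paper simply invokes Kahane's general theory for the truncated product; both routes are valid.
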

\begin{proof}
	Recall the definitions  \eqref{def-psim}, \eqref{def-phim} for the stochastic processes $\psi_m$ and $\varphi_m$ respectively. 
	For any $m\geq 1$, set 
	\begin{align*}
		\mu_{\gamma, >m}(\mathrm{d}t) :=& \prod_{k=m+1}^\infty \exp\Big(\gamma \varphi_k(t) -  \frac{\gamma^2 \mathbb{E} [\varphi^2_k(t)]}{2}\Big) \mathrm{d}t 
		\\
		=& \lim_{N\to\infty} \prod_{k=m+1}^N \exp\Big(\gamma \varphi_k(t) -  \frac{\gamma^2 \mathbb{E} [\varphi^2_k(t)]}{2}\Big) \mathrm{d}t.
	\end{align*}
	Similar to the random measure $\mu_{\gamma, \GMC}(\mathrm{d}t) $, the existence of $ \mu_{\gamma, >m}(\mathrm{d}t) $ is also guaranteed by \cite{Kah85a}.

	Clearly, we have 
	\[
	\mu_{\gamma,\mathrm{GMC}}(\mathrm{d}t) = \underbrace{\exp\Big(\gamma \psi_m(t) - \frac{\gamma^2}{2}(m\log 2+1)\Big)}_{\text{denoted $R_{\gamma, m}(t)$}} \mu^\gamma_{>m}(\mathrm{d}t). 
	\]
	Therefore, 
	\[
	\int_{[0, 1]^2} \frac{\mu_{\gamma,\mathrm{GMC}}(\mathrm{d}t)\mu_{\gamma,\mathrm{GMC}}(\mathrm{d}s)}{|t - s|^\beta} =\int_{[0, 1]^2} \frac{R_{\gamma, m}(t) R_{\gamma, m}(s) }{|t - s|^\beta}\mu_{\gamma, >m}(\mathrm{d}t)\mu_{\gamma, >m}(\mathrm{d}s).
	\]
	By Corollary~\ref{cor-holder} and Convention~\ref{conv-mod},  almost surely, $R_{\gamma,m}(t)$ is continuous on $t$ and non-vanishing. Thus, by setting 
	\[
	B_\beta(m) := \left\{\int_{[0,1]^2}\frac{\mu_{\gamma, >m}(\mathrm{d}t)\mu_{\gamma, >m}(\mathrm{d}s)}{|t-s|^\beta} < \infty\right\} \in \sigma (\varphi_{k}: k>m), 
	\]
	we have $A_\beta = B_\beta(m)$ up to a probability measure zero set.  Since $m$ is arbitrary,  the result follows by applying Kolmogorov's zero-one law.
\end{proof}

\begin{proof}[Proof of the upper bound \eqref{up-bdd-pf}]
	Assume by contradiction that  $\dim_F(\mu_{\gamma, \GMC}) > D_\gamma$ with positive probability.
	That is, there exists an $\varepsilon>0$ with $D_\gamma+\varepsilon < 1$ such that 
	\begin{align}\label{pos-p}
		|\widehat{\mu_{\gamma,\GMC}}(\xi)|^2 = O (|\xi|^{-(D_\gamma+2 \varepsilon)})  \quad  \text{with positive probability}.
	\end{align}
	By the standard equality for the Riesz energy (see, e.g., \cite[Theorem~3.10]{Mat15}), 
	\[
	\int_{[0, 1]^2} \frac{\mu_{\gamma, \GMC}(\mathrm{d}t)\mu_{\gamma, \GMC} (\mathrm{d}s)}{|t - s|^{D_\gamma+\varepsilon}} =  \pi^{D_\gamma+\varepsilon-1/2} \frac{\Gamma\big(\frac{1-D_\gamma-\varepsilon}{2}\big)}{\Gamma\big(\frac{D_\gamma+\varepsilon}{2}\big)} \int_\mathbb{R} |\widehat{\mu_{\gamma, \GMC}}(\xi)|^2|\xi|^{D_\gamma+\varepsilon-1}\mathrm{d}\xi.
	\] 
	Therefore,   by \eqref{pos-p}, 
	\[
	\int_{[0, 1]^2} \frac{\mu_{\gamma, \GMC}(\mathrm{d}t)\mu_{\gamma, \GMC} (\mathrm{d}s)}{|t - s|^{D_\gamma+\varepsilon}}<\infty \quad  \text{with positive probability}.
	\]
	Hence,  by Lemma~\ref{0-1-law}, 
	\[
	\int_{[0, 1]^2} \frac{\mu_{\gamma, \GMC}(\mathrm{d}t)\mu_{\gamma, \GMC} (\mathrm{d}s)}{|t - s|^{D_\gamma+\varepsilon}}<\infty \quad a.s. 
	\]
	However, this contradicts to the following  result from \cite{LRV15} (see also \cite[Remark~2]{GV23}): 
	\[
	\int_{[0, 1]^2} \frac{\mu_{\gamma, \GMC}(\mathrm{d}t)\mu_{\gamma, \GMC} (\mathrm{d}s)}{|t - s|^{\beta}}<\infty \quad a.s. \quad \text{if and only if}\quad \beta<D_\gamma.
	\]
	This completes the proof of the almost sure upper bound \eqref{up-bdd-pf}.
\end{proof}


\newcommand{\etalchar}[1]{$^{#1}$}

\end{document}